\DeclareMathOperator*{\Argmax}{argmax}
\newcommand{\V}{\mathbb{V}}
\newcommand{\R}{\mathbb{R}}
\newcommand{\Rset}[1]{\mbox{$\mathbb{R}^{#1}$}}
\newcommand{\Z}{\mathbb{Z}}
\newcommand{\N}{\mathbb{N}}
\newcommand{\Sr}{\mathcal{S}}
\newcommand{\te}{\theta}
\newcommand{\hte}{\hat\theta}
\newcommand{\eps}{\varepsilon}
\newcommand{\Te}{\Theta}
\newcommand{\x}{r}
\newcommand{\tS}{S}
\newcommand{\ass}{s}
\newcommand{\bass}{\bar s}
\newtheorem{prop}{Proposition}
\newtheorem{rem}{Remark}
\newcommand{\Kp}{\mathbf{K_p}}
\newcommand{\Kg}{\mathbf{K_g}}
\newcommand{\Syme}{\text{Sym}_{dk_g,*}^+(\mathbb{R})}
\newcommand{\bdy}{\bold{\dy}}
\newcommand{\bdz}{\bold{z}}
\def\pixel{u}
\def\locpixel{v}
\newcommand{\dy}{y}
\newcommand{\pscal}[2] { \left< #1 , #2 \right> }
\newcommand{\lstat}{\textbf{$\pi$}}
\newcommand{\ntrans}{\textbf{$\Pi$}}
\newcommand{\cand}{q_s}
\newcommand{\candMALA}{q_{MALA}}
\newcommand{\smallset}{\texttt{C}}
\def\Gammap{\Sigma_p}
\def\Gammag{\Sigma_g}
\def\I0{I_\alpha}
\def\pk{\x_{p,j}}
\def\gk{\x_{g,j}}
\def\Kapa{\mathcal{K}}
\def\Xspace{\mathcal{X}}
\newcommand{\obs}{y}
\newcommand{\hid}{z}
\newcommand{\dobs}{q}
\newcommand{\dhid}{l}
\newcommand{\dte}{p}
\newcommand{\dss}{m}
\newcommand{\bhid}{\bar z}
\begin{document}

\title{Convergent Stochastic Expectation Maximization algorithm with efficient sampling in high dimension. Application to deformable template model estimation
}

\titlerunning{Efficient stochastic EM in high dimension}        

\author{St\'ephanie Allassonni\`ere         \and
        Estelle Kuhn 
}


\institute{S. Allassonni\`ere \at
              CMAP Ecole Polytechnique \\
			  Route de Saclay\\
			  91128 Palaiseau, FRANCE
              Tel.: +331.69.33.45.65\\
              \email{stephanie.allassonniere@polytechnique.edu}           
           \and
           E. Kuhn \at
              INRA\\
			  Domaine de Vilvert\\
			  78352 Jouy-en-Josas, FRANCE\\
}

\date{Received: date / Accepted: date}

\maketitle

\begin{abstract}
	Estimation in the deformable template model is a big challenge in image analysis. The issue is to estimate an atlas of a population. This atlas contains a template and the corresponding geometrical variability of the observed shapes. The goal is to propose an accurate algorithm with low computational cost and with theoretical guaranties of relevance. This becomes very demanding when dealing with high dimensional data which is particularly the case of medical images. We propose to use an optimized Monte Carlo Markov Chain method into a stochastic Expectation Maximization  algorithm in order to estimate the model parameters by maximizing the likelihood. In this paper, we present a new Anisotropic Metropolis Adjusted Langevin Algorithm which we use as transition in the MCMC method. We first prove that this new sampler leads to a geometrically uniformly ergodic Markov chain.
	We prove also that under mild conditions, the estimated parameters converge almost surely and are asymptotically Gaussian distributed. The methodology developed is then tested on handwritten digits and some 2D and 3D medical images for the deformable model estimation. More widely, the proposed algorithm can be used for a large range of models in many fields of applications such as pharmacology or genetic.
\keywords{Deformable template \and geometric variability \and maximum likelihood estimation \and missing variable \and high dimension \and stochastic EM algorithm \and MCMC \and Anisotropic MALA}
\end{abstract}

\section{Introduction}

We consider here the deformable template model introduced for Computational Anatomy in \cite{gm98}. This model, which has demonstrated great impact in image analysis, was developed and analyzed later on by many groups (among other  \cite{mty02,marsland04:_const_diffeom_repres_group_analy,VPPA2009,hippodata}). It offers several major advantages. First, it enables to describe the population of interest by a digital anatomical template. 
It also captures the geometric variability of the population shapes through the modeling of deformations of the template which match it to the observations. Moreover, the metric on the space of deformations is specified in the model as a quantification of the deformation cost. Not only describing the population, this generative model also allows to sample synthetic data using both the template and the geometrical metric of the deformation space which together define the atlas. Nevertheless, the key statistical issue is how to estimate efficiently and accurately these parameters of the model from an observed population of images.

Several numerical methods have been developed mainly for the estimation of the template image (for example \cite{ctcg95,joshi04:_unbias}). Even if these methods lead to visual interesting results on some training samples, they suffer from a lack of theoretical properties raising the question of the relevance of the output and are not robust to noisy data. Another important contribution toward the statistical formulation of the template estimation issue was proposed in \cite{glasbey01}. However interesting this approach is not entirely satisfactory since the deformations are applied to discrete observations requiring some interpolation. Moreover it does not formulate the analysis in terms of a generative model which appears very attractive as mentioned above. 
To overcome these lacks, a coherent statistical generative model was formulated in \cite{AAT}.  For estimating all the model parameters, the template image together with the geometrical metric, the authors proposed a deterministic algorithm based on an approximation of the well-known Expectation Maximization (EM) algorithm  (see \cite{DLR}), where the posterior distribution is replaced by a Dirac measure on its mode (called FAM-EM). 
However, such an approximation leads to the non-convergence of the estimates highlighted when considering noisy observations. 

One solution to face this problem is to consider a convergent stochastic approximation of the EM (SAEM) algorithm which was proposed in \cite{DLM}. An extension using Monte Carlo Markov Chain (MCMC) methods was developed and studied in \cite{kuhnlavielle} and \cite{aktdefmod} allowing for wider applications.
To apply this extension to the deformable template model, the authors in \cite{aktdefmod} chose a Metropolis Hastings within Gibbs sampler (also called hybrid Gibbs) as MCMC method since the variables to sample were of large dimension (the usual Metropolis Hastings algorithm providing low acceptation rates).
This estimation algorithm has been proved convergent and performs very well on very different kind of data as presented in \cite{aktSFDS2010}. 
Nevertheless, the hybrid Gibbs sampler becomes computationally very expensive when sampling very high dimensional variables. Although it reduces the dimension of the sampling to one which enables to stride easier the target density support, it loops over the sampling variable coordinates, which becomes computationally unusable as soon as the dimension is very large or as the acceptation ratio involves heavy computations.
To overcome the problem of computational cost of this estimation algorithm, some authors propose to simplify the statistical model constraining the correlations of the deformations (see \cite{rsc09,mlmd2011}).  

Our purpose in this paper is to propose an efficient and convergent estimation algorithm for the deformable template model in high dimension without any constrains.  With regards to the above considerations,  the computational cost of the estimation algorithm can be reduced by optimizing the sampling scheme in the MCMC method.
\\

The sampling of high dimensional variables is a well-known difficult challenge. 
In particular, many authors have proposed to use the Metropolis Adjusted Langevin Algorithm (MALA) (see \cite{rt96} and \cite{st99_1}). This algorithm is a particular random walk Metropolis Hastings sampler. Starting from the current iterate of the Markov chain, one simulates a candidate with respect to a Gaussian proposal with an expectation equal to the sum of this current iterate and a drift related to the target distribution. The covariance matrix is diagonal and isotropic. This candidate is accepted or rejected with a probability given by the Metropolis Hastings acceptance ratio.

Some modifications have been proposed in particular to optimize the covariance matrix of the proposal in order to better stride the support of the target distribution (see \cite{st99_2,atchade2006,mr2011,gc2011}). 
In \cite{atchade2006} and \cite{mr2011}, the authors proposed to construct adaptive MALA chains for which they prove the geometric ergodicity of the chain uniformly on any compact subset of its parameters. Unfortunately, this technique does not take the whole advantage of changing the proposal using the target distribution. In particular, the covariance matrix of the proposal is given by a stochastic approximation of the empirical covariance matrix. This choice seems completely relevant as soon as the convergence toward the stationary distribution is reached. However, it does not provide a good guess of the variability during the first iterations of the chain since it is still very dependent on the initialization. This leads to chains that may be numerically trapped. Moreover, this particular algorithm may require a lot of tuning parameters. Although the theoretical convergence is proved, this algorithm may be very difficult to optimize in practice \emph{into} an estimation process.

Recently, the authors in \cite{gc2011} proposed the Riemann manifold Langevin algorithm in order to sample from a target density in high dimensional setting with strong correlations. This algorithm is also a MALA based one for which the choice of the proposal covariance is guided by the metric of the underlying Riemann manifold. It requires to evaluate the metric, its inverse as well as its derivatives. The proposed well-suited metric is the Fisher-Rao information matrix or its empirical value. However, in the context we are dealing with, the real metric, namely the metric of the space of non-rigid deformations, is not explicit preventing from any use of it (the simplest case of the 3-landmark-matching problem is calculated in \cite{mmm2012} leading to a very intricate formula which is difficult to extend  to more complex models). Moreover, if we consider the constant curvature simplification suggested in \cite{gc2011}, one still needs to invert the metric which may be neither explicit nor computationally tractable. Note that these constrains are common with other application fields such as genetic or pharmacology, where models are often complex.\\

For all these reasons, we propose to adapt the MALA algorithm in the spirit of both works in \cite{atchade2006} and \cite{gc2011} to get an efficient sampler into the stochastic EM algorithm. 
Therefore, we propose to sample from a proposal distribution which has the same expectation as the MALA but using a full anisotropic covariance matrix based on the anisotropy and correlations of the target distribution. This sampler will be called AMALA in the sequel.
 The expectation is obtained as the sum of the current iterate plus a drift which is proportional to the gradient of the logarithm of the target distribution. We construct the covariance matrix as a regularization of the Gram matrix of this drift.
We prove the geometric ergodicity uniformly on any compact set of the AMALA assuming some regularity conditions on the target distribution. We also prove the almost sure convergence of the parameter estimated sequence generated by the coupling of AMALA and SAEM algorithms (AMALA-SAEM) toward the maximum likelihood estimate under some regularity assumptions on the model. Moreover, we prove a Central Limit Theorem for this sequence under usual conditions on the model.

We test our estimation algorithm on the deformable template model for estimating hand-written digit atlases from the USPS database and medical images of corpus callosum (2D) and of dendrite spine excrescences (3D). The proposed estimation method is compared with the results obtained from the FAM-EM algorithm and from the MCMC-SAEM algorithm using different samplers namely the hybrid Gibbs sampler, the  MALA and the adaptive MALA proposed in \cite{atchade2006} previously introduced. The comparison is also made via classification rates on the USPS database. These experiments demonstrate the good behavior of our method in both the accuracy of the estimation and the low computational cost in high dimension.
\\

The paper is organized as follows. In Section \ref{sec:BME}, we recall the Bayesian Mixed Effect (BME) template model. In Section \ref{sec:AMALASAEM}, we consider the maximum likelihood estimation issue in the general framework of missing data models. We pre- sent our stochastic version of the EM algorithm using the AMALA sampler. The convergence properties are established in Section~\ref{sec:theoretical_properties}.  Section \ref{sec:appli} is devoted to the experiments on the BME template estimation. Finally, we give some conclusion in Section \ref{sec:conclusion}. The proofs are postponed in Section \ref{sec:proofs}.

\section{Description of the Bayesian Mixed Effect (BME) Template model}
\label{sec:BME}

The deformable template model aims at summarizing a population of images by two quantities. The first one is a mean image called template which has to represent a relevant shape as one could find in the population. The second quantity represents the variance in the space of shapes. This corresponds to the geometrical variability around the mean shape. Let us now describe the deformable template model more precisely.

We consider the hierarchical Bayesian framework for dense deformable template developed in \cite{AAT} where each image in a population is assumed to be generated as a noisy and randomly deformed version of the template. 

The database is composed of $n$ grey level images $(\dy_i)_{1\leq i\leq n}$ observed on a grid $\Lambda$ of pixels (or voxels) 
included in a continuous domain $D \subset \R^d$, (typically $D=[-1,1]^{d}$ where $d$ equals $2$ or $3$).
The expected template $I_0 : \R^{d} \to \R$ takes its values in the continuous domain. 
Each observation $\dy$ is assumed to be a discretization on $\Lambda$ of a random deformation of this template plus an independent noise. 
Therefore, there exists an unobserved deformation field (also called mapping) $m:\R^d \to \R^d$ such that for $\pixel \in \Lambda$
\begin{eqnarray*}
\dy(\pixel) = I_0(\locpixel_\pixel - m(\locpixel_\pixel)) + \sigma \epsilon (\pixel)\ ,
\end{eqnarray*}
where $ \sigma \epsilon$ denotes the independent additive noise and $\locpixel_\pixel $ is the location of pixel (or voxel) $\pixel$.

Considering the template and the deformations as continuous functions would lead to a dense problem. 
The dimension is reduced assuming that both elements belong to a subset of fixed Reproducing Kernel Hilbert Spaces (RKHS) $V_p$ and $V_g$ defined by their respective kernels $K_p$  and $K_g$. More precisely,  
let $(\pk)_{1\leq j\leq k_p}$ -respectively $(\gk)_{1\leq j\leq k_g}$- be some fixed control points in the domain $D$: 
there exist $\alpha \in \R^{k_p}$ -resp. $z \in \R^{k_g} \times \R^{k_g}$- such that for all $ \locpixel$ in $D $:
\begin{eqnarray}\label{template}
\I0(\locpixel) = (\Kp \alpha)(\locpixel) = \sum\limits_{j=1}^{k_p} 
K_p (\locpixel,\pk) \alpha^j  \\ m_z(\locpixel) = (\Kg z)(\locpixel) = \sum\limits_{j=1}^{k_g} K_g (\locpixel,\gk)
z^j\,.
\end{eqnarray}

For clarity, we write $\bdy=(\dy_i)_{1\leq i\leq n}$ for the $n-$tuple of observations and $\bdz=(z_i)_{1\leq i\leq n}$ for the $n-$tuple of unobserved variables defining the deformations. The statistical model on the observations is chosen as follows:
\begin{equation}\label{pre-priors}
\left\{
  \begin{array}[h]{l}
  \bdz\sim \otimes_{i=1}^n\mathcal{N}_{dk_g}(0,\Gamma_g)\  |\
\Gamma_g \, ,\\\\
\bdy\sim
  \otimes_{i=1}^n\mathcal{N}_{|\Lambda|} (m_{z_i}\I0,\sigma^2 Id_{|\Lambda|})\ |\
  \bdz,\alpha, \sigma^2 \, ,
  \end{array}\right.
\end{equation}
where $\otimes$ denotes the product of independent variables and $mI_\alpha(\pixel) = I_\alpha(\locpixel_\pixel - m(\locpixel_\pixel))$, for $\pixel$ in $\Lambda$. 
The parameters of interest are the template $\alpha$, the noise variance $\sigma^2$  and the deformation covariance matrix $\Gamma_g$. We assume that $\te=(\alpha, \sigma^2, \Gamma_g)$ belongs to the parameter space $\Te$:
\begin{multline}
  \Theta \triangleq \{\ \theta=(\alpha,\sigma^2,\Gamma_g)\ |\
\alpha\in\mathbb{R}^{k_p},\ |\alpha| <R, \\  \ \sigma>0,\ \Gamma_g \in\Syme\ \}\,,
\end{multline}
where $\Syme$ is the cone of real positive $d k_g \times d k_g$ definite
symmetric matrices, $R$ is an arbitrary positive constant and $d$ is the space dimension (typically $2$ or $3$ for images).

Since we aim at dealing with small size samples and high dimensional parameters, we work in the Bayesian framework and we introduce priors on the parameters. In addition of guiding the estimation it regularizes the estimation as shown in \cite{AAT}. The priors are all independent:
$\te=(\alpha, \sigma^2,\Gamma_g)\sim \nu_p\otimes\nu_g$ where
\begin{equation}\label{priors} \left\{
\begin{array}{l}
\displaystyle{\nu_p(d\alpha, d\sigma^2)} \varpropto 
\displaystyle{\exp \left(-\frac{1}{2} (\alpha-\mu_p)^T (\Gammap)^{-1} (\alpha - \mu_p) \right)}\times \\
\displaystyle{
\left(
\exp\left(-\frac{\sigma_0^2}{2 \sigma^2}\right) \frac{1}{\sqrt{\sigma^2}}
\right)^{a_p}
  d\sigma^2
 d\alpha}, \ a_p \geq 3 \, , \\
\displaystyle{\nu_g(d\Gamma_g) \varpropto } \left( \exp(-\langle
 \Gamma_g^{-1} ,
 \Gammag \rangle_F /2) \frac{1}{\sqrt{|\Gamma_g|}}  \right)^{a_g}
 d\Gamma_g, \\ a_g\geq 4k_g+1
\, .
\end{array} \right.
\end{equation}
For two matrices $A,B$ we define the Frobenius inner product by $\langle A,B \rangle_F \triangleq tr(A^T B)$.\\

 Parameter estimation for this model is then performed by Maximum A Posteriori (MAP)~: 
\begin{eqnarray} \label{MAP}
\hat{\theta} = \Argmax_{\theta \in \Theta} q(\theta|\bdy)\,,
\end{eqnarray}
where $q(\theta|\bdy)$ is the posterior density of $\theta$ conditional on $\bdy$.
The existence and consistency of the MAP estimator for the BME template model has been proved in \cite{AAT}.\\

Note that this model belongs to a more general class called mixed effect models. The fixed effects are the parameters $\theta$ and the random effects are the deformation coefficients $\bdz$. The estimation issue in this class is treated in the same way as the likelihood maximization problem in the more general framework of incomplete-data models. Therefore, the next section will be presented in this general setting in which the proposed algorithm applies.

\section{Maximum likelihood  estimation}
\label{sec:AMALASAEM}

\subsection{Maximum likelihood estimation for incomplete data setting} 
\label{sub:incomplete_data_setting}

We consider in this section the standard incomplete data (or partially-observed-data) setting and recall the usual notation. 
We denote by $\obs\in \Rset{\dobs}$ the observed data and by $\hid\in  \Rset{\dhid}$ the missing 
data, so that we obtain the complete data $(\obs,\hid)\in  \Rset{\dobs+\dhid}$ for some  $\dobs \in  \N^*$ and $\dhid  \in
\N^*$. 
We consider these data as random vectors. Let  $\mu'$  be  a  $\sigma$-finite  measure on  $\Rset{\dobs+\dhid}$  and  $\mu$  the
restriction  of $\mu'$  to  $\Rset{\dhid}$  generated by the projection $(\obs,\hid)  \mapsto \hid$.  
We assume that the probability density function (pdf) of the random vector $(\obs,\hid)$  belongs to 
$\mathcal{P}=\{f(\obs,\hid;\te),\te \in  \Te\}$, a  family of parametric probability
density functions on $\Rset{\dobs+\dhid}$ w.r.t. $\mu'$, where $\Te \subset \Rset{\dte}$.
 Therefore, the observed  likelihood
(i.e. the incomplete-data likelihood) is defined for some $\theta \in \Theta$ by:
\begin{equation}\label{obslik}
g(\obs;\te) \triangleq \int f(\obs,\hid;\te)\mu(d\hid).
\end{equation}

Our purpose is to find  the maximum likelihood estimate that is the value  $\hte_g$ in $\Te$ that maximizes the observed likelihood $g$ given a sample of observations.  However, this maximization can often not be done analytically because of the integration involved in  \eqref{obslik}. A powerful tool which enables to compute this maximization in such a setting is the Expectation Maximization (EM) algorithm (see \cite{DLR}). It is an iterative procedure which consists of two steps. First, the so-called E-step computes the conditional expectation of the complete log-likelihood  using the current parameter value. 
Second, the M-step achieves the update of the parameter by maximizing this expectation over $\Te$.
However, the computation of this expectation is often intractable analytically. Therefore, alternative 
procedures have been proposed. We are particularly interested in the Stochastic Approximation EM (SAEM) algorithm (see \cite{DLM})
because of its  theoretical convergence property and its small computation time.
 In this stochastic algorithm, 
the usual E-step is replaced by two steps, the first one corresponding to the simulation of
realizations of the missing data, the second one to the computation of a stochastic approximation of the complete log-likelihood using these simulated values. It 
can be shown  under weak regularity conditions that the sequence generated by this algorithm converges
almost surely toward a local maximum of the observed likelihood (see \cite{DLM}).

Nevertheless the simulation step requires some attention. In the  SAEM algorithm the 
simulated values of the missing data have to be drawn from the posterior distribution defined by:
\begin{equation*}
p(\hid|\obs;\te)\triangleq 
\left \{ 
\begin{array}{ll} 
f(\obs,\hid;\te) / g(\obs;\te) & \mbox{if } g(\obs;\te) \not = 0 \\
0                           & \mbox{if } g(\obs;\te) = 0  \,.
\end{array} 
\right.
\end{equation*}

When not possible, the extension
using  MCMC method (see \cite{kuhnlavielle,aktdefmod}) allows to apply the SAEM algorithm using simulations obtained from some transition
probability of an ergodic Markov chain having the targeted posterior distribution as stationary distribution.
Methods like Metropolis Hastings algorithm or Gibbs sampler are useful to perform this 
assignment. However, this becomes very challenging in high dimensional setting. Indeed, when the MCMC 
procedure has to explore a space of high dimension, its  convergence may occur in practice only after
a possibly infinite time. Thus, it is necessary to optimize this MCMC procedure. 
This is what we will propose in the following paragraph.

\subsection{Description of the sampling method: Anisotropic Metropolis Adjusted Langevin Algorithm}
\label{sec:AMALA}

We propose an anisotropic version of the well-known Metropolis Adjusted Langevin Algorithm (MALA).
So let us first recall the steps of this algorithm. 
Let $\Xspace$ be an open subset of $\R^\dhid$, the $\dhid-$dimensional Euclidean space equipped with its Borel $\sigma-$algebra $\mathcal{B}$. Let us denote $\lstat$ the pdf of the target distribution with respect to the Lebesgue measure on $\Xspace$. We assume that $\lstat$ is positive continuously differentiable.
At each iteration $k$ of this algorithm, a candidate $X_c$ is simulated with respect to the Gaussian distribution with expectation $X_k + \frac{\sigma^2}{2} D(X_k)$ and covariance $\sigma^2 Id_{\dhid}$ where $X_k$ is the current value, 
\begin{equation}
	\label{eq:D}
D(x) = \frac{b}{\max(b,|\nabla \log \lstat(x)|)}\nabla \log \lstat(x)\,,	
\end{equation}
 $Id_{\dhid}$ is the identity matrix in $\R^{\dhid}$ and $b>0$ is a fixed truncation threshold. Note that the truncation of the drift $D$ was already suggested in \cite{grsBook} to provide more stability. In the following, we denote $\candMALA(x,\cdot)$ the pdf of this Gaussian candidate distribution starting from $x$. Given this candidate, the next value of the Markov chain is updated using an acceptance ratio $\alpha_{MALA}(X_k,X_c)$ as follows:
$X_{k+1} = X_c$ with probability 
\begin{equation}
	\label{eq:acceptrate}
\alpha_{MALA} (X_k,X_c) = \min\left(1,\frac{\lstat(X_c)\candMALA(X_c,X_k)}{\candMALA(X_k,X_c)\lstat(X_k)}\right)
\end{equation}
and $X_{k+1}=X_k$ with probability  $1-\alpha_{MALA}(X_k,X_c)$. This provides a transition kernel $\ntrans_{MALA}$ of this form: for any Borel set $A\in \mathcal{B}$
\begin{multline}
	\label{eq:transitionkernelMALA}
	\ntrans_{MALA}(x,A) = \int_A \alpha_{MALA}(x,z) \candMALA(x,z)dz + \\ \mathds{1}_{A} (x)\int_{\Xspace}(1-\alpha_{MALA}(x,z))\candMALA(x,z)dz\,.
\end{multline}

The Gaussian proposal of the MALA algorithm is optimized with respect to its expectation guided by the Langevin diffusion. One step further is to optimize also its covariance matrix. A first work in this direction was proposed in \cite{atchade2006}. The covariance matrix of the proposal is given by a projection of a stochastic approximation of the empirical covariance matrix. It produces an adaptive Markov chain. This process involves some additional tuning parameters which have to be calibrated. Since our goal is to use this sampler in an estimation algorithm, the sampler has at each iteration a different target distribution (depending on the current estimate of the parameter). Therefore, the optimal tuning parameter may be different along the iterations of the estimation process.  Although we agree with the idea of using  adaptive chain, we prefer taking the advantage of the dynamic of the estimation algorithm.
On the other side, an intrinsic solution has been proposed in \cite{gc2011} where the covariance matrix is given by the metric of the Riemann manifold of the variable to sample. Unfortunately, this metric may not be accessible and its empirical approximation not easy to compute. This is particularly the case in the BME template model. 
\\

For these reasons, we propose a sampler in the spirit of \cite{atchade2006}, \cite{gc2011} or \cite{mr2011} however not providing an adaptive chain as motivated above. The adaption comes from the dependency of the target distribution with respect to the parameters of the model which are updated along the estimation algorithm. The proposal remains a Gaussian distribution but both the drift and the covariance matrix depend on the gradient of the target distribution. At the $k^{th}$ iteration, we are provided with $X_k$. The candidate is sampled from the Gaussian distribution with expectation $X_k+ \delta D(X_k)$ and covariance matrix $\delta \Sigma(X_k)$ denoted in the sequel

\noindent $\mathcal{N}\left(X_k+ \delta D(X_k), \delta \Sigma(X_k)\right)$ where 
$\Sigma(x) $ is given by~: 

\begin{equation}
	\label{eq:covarAMALA}
	\Sigma(x)= \varepsilon Id_{\dhid} + D(x) D(x)^T\,,
\end{equation}
$D$ is defined in Equation~\eqref{eq:D} and  $\varepsilon >0$ is a small regularization parameter. Note that the threshold parameter $b$ leads to a symmetric positive definite covariance matrix with bounded non zero eigenvalues.
We introduce the gradient of $\log \lstat$ into the covariance matrix to provide an anisotropic covariance matrix depending on the amplitude of the drift at the current value. When the drift is large, the candidate is likely to be far from the current value. This large step may not be of the right amplitude and a large variance will enable more flexibility. Moreover, this enables to explore a larger area around these candidates which would not be possible with a fixed variance.  On the other hand, when the drift is small in a particular direction, it means that the current value is within a region of high probability for the next value of the Markov chain. Therefore, the candidate should not move too far neither with a large drift nor with a large variance. This enables to sample a lot around large modes which is of particular interest.
This covariance also enables to treat the directions of interest with different amplitudes of variances as the drift already does. It also provides dependencies between coordinates since the directions of large variances are likely to be different from the Euclidean axis. 
This is taken into account here by introducing the Gram matrix of the drift into the covariance matrix. 

We denote by $q_c$ the pdf of this proposal distribution.
The transition kernel becomes: for any Borel set $A$.

\begin{multline}
	\label{eq:transitionkernel}
	\ntrans(x,A) = \int_A \alpha(x,z) q_c(x,z)dz + \\ \mathds{1}_{A} (x)\int_{\Xspace}(1-\alpha(x,z))q_c(x,z)dz\,,
\end{multline}
where 
\begin{equation}
	\label{eq:acceptationAMALA}
\alpha(X_k,X_c)=\min\left(1,\frac{\lstat(X_c)q_c(X_c,X_k)}{q_c(X_k,X_c)\lstat(X_k)}\right)\,.
\end{equation}

\subsection{Description of the  stochastic estimation algorithm}
\label{subsec:SAEM_AMALA}

Back to the stochastic estimation algorithm, the target distribution of the sampler is the posterior distribution  $ p( \cdot |\obs; \theta)$.

The four steps of the proposed AMALA-SAEM algorithm are detailed in this subsection~: simulation,
 stochastic approximation, truncation on random boundaries
and maximization steps. At each iteration $k$ of the algorithm, simulated values of the 
missing data are drawn from the transition probability of the AMALA algorithm described
 in Section \ref{sec:AMALA} with the current value of the parameters. Then, a stochastic approximation of the complete log-likelihood is computed using these simulated values for the missing data and is truncated using random boundaries. 
Finally, the parameters are updated by maximizing this quantity over $\Te$.

We consider here only parametric models $\mathcal{P}$ which belong to the curved 
exponential family, this means that the complete
likelihood  $f(\obs,\hid;\te)$ can be written as:
\begin{equation*}
\label{expform}
  f(\obs,\hid;\te) = \exp \left[ -\psi(\te) + \langle \tS(\hid),
  \phi(\te)\rangle \right] \, ,
\end{equation*}
where $\pscal{\cdot}{\cdot}$ denotes the Euclidean scalar product,
the sufficient statistics $\tS$ is a  function on $\Rset{\dhid}$,
 taking its
values in a subset $\Sr$ of $\Rset{\dss}$ and $\psi$, $\phi$ are
two  functions on $\Te$ (note that $\tS$, $\phi$ and $\psi$ may
depend also on $\obs$, but  we
omit this dependency for simplicity). This condition is usual in the framework of EM algorithm applications and it is fulfilled by large range of models even complex ones as the BME template model.  
Therefore the stochastic approximation can be done either on the sufficient statistics $\tS$ of the model 
or on the complete log-likelihood using a positive step-size sequence $(\gamma_k)_{k\in\N}$. 

Concerning the truncation procedure, we introduce a sequence of increasing compact 
subsets of $\Sr$ denoted by $(\Kapa_q)_{q\geq 0} $
such that $\cup_{q\geq 0} \Kapa _q = \Sr $
and $ \Kapa _q \subset  \text{int}(\Kapa _{q+1}) ,$ for all $q \geq 0$. Let also
$(\varepsilon_q)_{q\geq 0}  $ be a monotone non-increasing sequence of positive
numbers and $\mathrm{K}$ a compact subset of $\R^{l}$. 
At iteration $k$ we simulate a   value $\bhid$ for the 
missing data from the Anisotropic Metropolis Adjusted Langevin Algorithm using the current 
value of the parameter $\te_{k-1}$. We compute the  associated stochastic approximation of the sufficient 
statistics of the model $\bass$. 
If it does not  wander outside the current
compact set $\Kapa_k$ and if it is not too far from its previous value $\ass_{k-1}$, 
we keep the possible proposed values for $(\hid_k,\ass_k)$.
As soon as one of these conditions is
not fulfilled, 
we reinitialize the sequences of
 $\hid$ and $\ass$ using
a projection (for more details see \cite{andrieumoulinespriouret} ) and  we
increase the size of the compact set used for the truncation. 
As explained in \cite{andrieumoulinespriouret}, the re-projections act as a drift  as they force the chain to come back to a compact set when it grows too rapidly. It reinitializes the algorithm with a smaller step size. However, as the chain has an unbounded support, it requires the use of adaptive truncations. As we shall see in the Proof section (and already noted in \cite{andrieumoulinespriouret}), the limitation imposed on the increments of the sequence is required in order to ensure the convergence of the whole algorithm.

Concerning the maximization step, we denote by $L$ the function defined on $\Sr \times \Te$ taking values in $\Rset{}$
equaled for all $(\ass,\te)$ to $L(\ass,\te)=-\psi(\te) + \langle \ass,  \phi(\te)\rangle$. We assume that there exists 
a function $\hte$ defined on  $\Sr$ taking values in $\Te$ such that
$$\forall \te \in \Te \ \forall \ass \in \Sr \ L(\ass,\hte(\ass))\geq L(\ass,\te).$$ 
Finally we update the parameter using  the value of the function $\hte$ evaluated
in  $\ass_k$.
\\

The complete algorithm is summarized in Algorithm \ref{algo:AMALASAEM}. It only involves three parameters: $b$ the threshold for the gradient which appears in the expectation as well as in the covariance matrix, $\delta$ the scale on this gradient and $\varepsilon$ a small regularization parameter to ensure a positive definite covariance matrix. The scale  $\delta$ can be easily optimized looking at the data we are dealing with to adapt to the range of the drift. The value of the threshold $b$ is in practice never reached. 
The practical choices for the sequences $(\gamma_k)_k$ and $(\varepsilon_k)_k$ of positive step sizes used in the stochastic approximation and the tuning parameters
will be detailed in the section devoted to the experiments.

\begin{algorithm}
 \caption{AMALA within SAEM}
\label{algo:AMALASAEM}
\begin{algorithmic}
\FORALL{$k=1:k_{end}$}
\STATE\texttt{Sample} $\hid_c $ with respect to $\mathcal{N}(\hid_{k-1}+ \delta D(\hid_{k-1},\theta_{k-1}), \delta 
\Sigma(\hid_{k-1},\theta_{k-1}))$ whose pdf is denoted $q_{s_{k-1}}(\hid_{k-1}, . )$
where  
\begin{equation*}
	\left\{
	\begin{array}{ll}
	D(\hid_{k-1},\theta_{k-1}) =&  \frac{b\nabla \log p(\hid_{k-1}|\obs;\te_{k-1})}{\max(b,|\nabla \log p(\hid_{k-1}|\obs;\te_{k-1})|)} 
		   \\
\\
	\Sigma(\hid_{k-1},\theta_{k-1})=&  D(\hid_{k-1},\theta_{k-1})D(\hid_{k-1},\theta_{k-1})^T +
	\\ 
	&\varepsilon Id_{\dhid}.
	\end{array}\right.
\end{equation*}
\vspace{0.3cm}
\STATE\texttt{Compute} the acceptance ratio 
	$\alpha_{\theta_{k-1}}(\hid_{k-1},\hid_c)$ as defined in Eq.~\eqref{eq:acceptationAMALA}. 
\STATE\texttt{Sample}  $ \bhid=\hid_c$ with probability  $\alpha_{\theta_{k-1}}(\hid_{k-1},\hid_c)$ and $\bhid=\hid_{k-1}$ with probability $1-\alpha_{\theta_{k-1}}(\hid_{k-1},\hid_c)$
\STATE\texttt{Do the stochastic approximation} $$\bass = \ass_{k-1} + \gamma_k \left(\tS(\bhid) - \ass_{k-1}\right),$$
where $(\gamma_k)_k$ is a sequence of positive step sizes.
\IF{ $\bass \in \Kapa_{\kappa_{k-1}} $
and $\|\bass - \ass_{k-1}\|\leq \varepsilon_{\zeta_{k-1}}  $}
\STATE Set $( \hid_k,\ass_k) = (\bhid,\bass)$
and
$\kappa_k=\kappa_{k-1} $, $\nu_k=\nu_{k-1}+1$, $\zeta_k=\zeta_{k-1}+1$
\ELSE
\STATE set $( \hid_k,s_k) = (\tilde{\hid},\tilde{\ass}) \in
 \mathrm{K}\times \Kapa_0 $ and  $\kappa_k=\kappa_{k-1}+1 $, $\nu_k=0 $, $\zeta_k =
\zeta_{k-1} + \Psi(\nu_{k-1})$
\STATE where $\Psi : \ \N \to \Z$ is a function such that
$\Psi(k)> -k$ for any $k$
\STATE and
$(\tilde{\hid},\tilde{\ass})$  is chosen arbitrarily.
\ENDIF
\STATE\texttt{Update} the parameter $$\te_{k}=\hte(\ass_k)$$
\ENDFOR
\vspace{0.5cm}
\end{algorithmic}
\end{algorithm}

\section{Theoretical Properties} 
\label{sec:theoretical_properties}

\subsection{Geometric ergodicity of the AMALA}
\label{sec:mod.3}

Let $\Sr$ be a subset of $\Rset{\dss}$ for some positive integer $\dss$.
 Let $\mathcal{X}$ be a measurable subspace of $\Rset{\dhid}$ for some positive integer 
$\dhid$. 
Let $(\lstat_s)_{s \in \Sr}$ be a family of positive continuously differentiable
 probability density functions with respect to the Lebesgue measure on $\mathcal{X}$. 
For any $s\in \Sr$, denote by $\ntrans_s$ the transition kernel corresponding to the AMALA procedure described in Section \ref{sec:AMALA} with stationary distribution $\lstat_s$.
We prove in the following proposition that each kernel of the family $(\ntrans_s)_{s\in\Sr}$ is uniformly geometrically ergodic and that this property holds uniformly in $s$ on any compact subset $\Kapa$ of $\Sr$.

We require a usual assumption  on the stationary distributions namely the so-called super-exponential property given by:

\begin{description}
	\item[\textbf{(B1)}] For all $s\in\Sr$, the density $\lstat_s$ is positive with continuous first derivative such that:
	\begin{equation}
		\label{eq:superexp1}
		\lim\limits_{|x|\to \infty} n(x). \nabla \log\lstat_s(x) = -\infty
	\end{equation}
	and 
	\begin{equation}
		\label{eq:superexp2}
		\limsup\limits_{|x|\to\infty} n(x) . m_s(x) <0
	\end{equation}
	where $\nabla$ is the gradient operator in $\R^{\dhid}$, $n(x)=\frac{x}{|x|}$ is the unit vector pointing in the direction of $x$ and $m_s(x)= \frac{\nabla\lstat_s(x)}{|\nabla\lstat_s(x)|}$ is the unit vector in the direction of the gradient of the stationary distribution at point $x$.
\end{description}
We assume also some regularity properties of the stationary distributions with respect to $\ass$.
\begin{itemize}
	\item[\textbf{(B2)}] For all $x\in \Xspace$, the functions $\ass \mapsto \lstat_s$ and $\ass \mapsto \nabla_x \log \lstat_s$ are continuous on $\Sr$.
\end{itemize}

We now define  for some $\beta \in ]0,1[$, $V_s(x) = c_s\lstat_s(x)^{-\beta}$ where $c_s$ is a constant so that $V_s(x)\geq 1$ for all $x\in \Xspace$. 
Let also $V_1(x)= \inf\limits_{s\in\Sr} V_s(x)$ and $V_2(x)= \sup\limits_{s\in\Sr} V_s(x)$.

Let us assume conditions on $V_2$:
\begin{itemize}
	\item[\textbf{(B3)}] There exists $b_0>0$ such that, for all $s\in\Sr$ and $x\in\Xspace$, $V_2^{b_0}$  is integrable against $\ntrans_s(x,.)$ and  
	\begin{equation}
			\limsup\limits_{b\to0}\sup\limits_{s\in\Sr, x\in \Xspace}\ntrans_s V_2^b(x)  =1\,.
	\end{equation}
\end{itemize}

\begin{prop}\label{ergogeo}
	
	Assume (\textbf{B1-B3}). Let $\Kapa$ a compact subset of $\Sr$.  
	There exist a function $V\geq 1$, a set $\smallset \subseteq \Xspace $, a probability measure $\nu$ such that $\nu(\smallset)>0$ and there exist constants $\lambda \in ]0,1[, \ b\in[0,\infty[$ and $\eps \in ]0,1]$ such that for all $s\in\Kapa$~:
	\begin{eqnarray}
		\label{eq:smallset}
		\ntrans_s(x,A) &\geq& \eps \nu(A) \ \ \ \forall x \in \smallset \ \, \ \forall A \in \mathcal{B}\,,  \\
		\label{eq:drift}
		\ntrans_s V(x) &\leq &\lambda V(x) + b\mathds{1}_\smallset (x)\,.
	\end{eqnarray}
\end{prop}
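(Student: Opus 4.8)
The plan is to establish the two conditions of a geometric drift together with a minorization on a small set, using the standard machinery for Metropolis--Hastings algorithms with super-exponential targets (Roberts--Tweedie, Jarner--Hansen), but being careful to obtain \emph{uniformity over the compact set} $\Kapa\subset\Sr$. First I would choose the Lyapunov function $V(x)=V_2(x)=\sup_{s\in\Sr}c_s\lstat_s(x)^{-\beta}$ (or, if needed for the drift inequality, a small power $V_2^{\,a}$ with $a\le b_0$ as allowed by \textbf{(B3)}), so that a single $V$ works simultaneously for every $s\in\Kapa$. The key point behind \textbf{(B3)} is that it controls the ``worst case'' behaviour of $\ntrans_s V_2$ as the proposal scale shrinks, which is exactly what is needed to beat the constant $\lambda<1$ in \eqref{eq:drift}.

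The drift inequality \eqref{eq:drift} is the heart of the argument. I would split $\Xspace$ into the rejection region and the acceptance region for the AMALA proposal. On the acceptance region, by reversibility-type estimates one shows $\lstat_s(X_c)q_c(X_c,x)\approx\lstat_s(x)q_c(x,X_c)$ so the chain essentially follows the Langevin proposal, and the super-exponential condition \eqref{eq:superexp1}--\eqref{eq:superexp2} forces $\lstat_s(X_c)^{-\beta}$ to be much smaller than $\lstat_s(x)^{-\beta}$ for $|x|$ large, because the truncated drift $D(x)$ still points ``inward'' (its inner product with $n(x)$ is eventually negative by \textbf{(B1)}, using that $D$ is a nonnegative multiple of $\nabla\log\lstat_s$). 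This yields $\ntrans_s V(x)\le \rho_1 V(x)$ with $\rho_1<1$ on the part of space where the chain tends to move toward the mode. On the region where the proposal is mostly rejected, $\ntrans_s V(x)\approx V(x)$, and here one must exploit \textbf{(B3)}: the uniform $\limsup_{b\to0}$ bound guarantees that by taking $\delta$ (equivalently $b$ in the statement) small enough, $\ntrans_s V_2^{a}(x)\le (1+\text{small})V_2^{a}(x)$ uniformly in $s\in\Sr$ and $x$, and combined with the strict contraction far out one gets, for $|x|$ large, a uniform $\ntrans_s V(x)\le\lambda V(x)$. For $|x|$ in a bounded region one simply absorbs everything into the additive term $b\mathds{1}_\smallset(x)$, taking $\smallset=\{|x|\le M\}$ for suitable $M$; the constant $b$ is finite by the local integrability of $V_2^{b_0}$ against $\ntrans_s(x,\cdot)$ in \textbf{(B3)} and continuity. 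Throughout, the continuity assumption \textbf{(B2)} plus compactness of $\Kapa$ is what upgrades pointwise-in-$s$ bounds to uniform-in-$s$ bounds, via a routine compactness/continuity argument on the relevant suprema.

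For the minorization \eqref{eq:smallset}, I would show that any bounded set $\smallset=\{|x|\le M\}$ is a small set for every $\ntrans_s$, uniformly in $s\in\Kapa$. Since $\lstat_s$ is positive and continuous and $D$ is bounded (truncation at $b$), the proposal density $q_c(x,z)$ is bounded below on $\smallset\times\smallset$ uniformly in $s\in\Kapa$ (the covariance $\Sigma(x)=\varepsilon Id+D(x)D(x)^T$ has eigenvalues in a fixed compact subset of $(0,\infty)$, as noted after \eqref{eq:covarAMALA}), and similarly the acceptance probability $\alpha_s(x,z)$ is bounded below on $\smallset\times\smallset$; hence $\ntrans_s(x,A)\ge\int_{A\cap\smallset}\alpha_s(x,z)q_c(x,z)\,dz\ge\eps\,\nu(A)$ with $\nu$ the normalized Lebesgue measure on $\smallset$ and $\eps>0$ independent of $s\in\Kapa$. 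One must check $M$ can be chosen simultaneously large enough for the drift's additive term and for $\nu(\smallset)>0$, which is immediate.

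The main obstacle I anticipate is the drift estimate in the acceptance region: controlling $\ntrans_s V$ requires a careful analysis of where the Metropolis acceptance ratio stays bounded away from zero versus where it collapses, and showing that the combined effect (contraction on the ``good'' set plus the near-identity bound from \textbf{(B3)} on the ``bad'' set) produces a genuine $\lambda<1$ \emph{uniformly} in $s$. The anisotropic covariance $\delta\Sigma(x)$ complicates the Gaussian tail computations compared with the isotropic MALA, but because $D$ is truncated the covariance eigenvalues are pinched between $\delta\varepsilon$ and $\delta(\varepsilon+b^2)$, so the extra terms are controlled and the structure of the Roberts--Tweedie/Jarner--Hansen proof carries over. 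I expect the bulk of the real work to be this verification, with everything else being the routine compactness-plus-continuity bookkeeping to get uniformity over $\Kapa$.
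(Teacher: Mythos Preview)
Your small-set argument and your handling of the anisotropic covariance are essentially what the paper does: it sandwiches the proposal density by two isotropic Gaussians, $k_1 g_{\epsilon_1}(x-z)\le q_s(x,z)\le k_2 g_{\epsilon_2}(x-z)$, which is exactly the consequence of the pinched eigenvalues you mention, and then the minorization on any compact $\smallset$ follows uniformly in $s\in\Kapa$ by continuity.

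The drift argument, however, has a real gap. The Jarner--Hansen acceptance/rejection decomposition gives contraction for $V_s(x)=c_s\lstat_s(x)^{-\beta}$, not for $V_2=\sup_{s'}V_{s'}$: on the acceptance set $A_s(x)$ one learns that $\lstat_s(z)/\lstat_s(x)$ is large, hence $V_s(z)/V_s(x)$ is small, but this says nothing about $V_2(z)/V_2(x)$ because the supremum defining $V_2$ may be attained at a different $s'$ at $z$ than at $x$. So you cannot run the contraction argument directly with $V=V_2$. You also misread \textbf{(B3)}: the $b$ there is the \emph{exponent} on $V_2$, not the proposal scale $\delta$, and the conclusion is an \emph{absolute} bound $\sup_{s,x}\ntrans_s V_2^{\,b}(x)\le 1+o(1)$ as $b\to 0$, not a multiplicative one of the form $(1+\text{small})V_2^{\,b}(x)$.

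The paper's fix is a two-step construction you are missing. First, for each fixed $s$, it runs the full Jarner--Hansen geometric argument (balls $B(x,a)$, tubes around level sets of $\lstat_s$, and a cone $W(x)\subset A_s(x)$ built from \textbf{(B1)}) to get $\limsup_{|x|\to\infty}\ntrans_s V_s(x)/V_s(x)\le 1-c$ with $c>0$ independent of $s$; the uniformity comes from the Gaussian sandwich and from \textbf{(B2)}. This yields $\ntrans_s V_s^{2\xi}\le\tilde\lambda V_s^{2\xi}+\tilde b\,\mathds{1}_{\smallset}$ uniformly in $s\in\Kapa$. Second, it defines the common Lyapunov function as $V=V_1^{\,\xi}V_2^{\,2\xi}$ with $V_1=\inf_s V_s$, applies the elementary inequality $V_1^{\,\xi}V_2^{\,2\xi}\le \tfrac{1}{2\varepsilon^2}V_1^{\,2\xi}+\tfrac{\varepsilon^2}{2}V_2^{\,4\xi}$ under $\ntrans_s$, bounds the first piece via $V_1\le V_s$ and the step-one drift, and bounds the second piece by the constant supplied by \textbf{(B3)} for $\xi$ small; optimizing $\varepsilon$ then gives $\ntrans_s V\le\lambda V+b\,\mathds{1}_{\smallset}$ with $\lambda<1$ independent of $s$. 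The introduction of $V_1$ is precisely the bridge from the $s$-dependent $V_s$ to a single $s$-free function, and that is the idea your proposal lacks.
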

The proof of Proposition \ref{ergogeo} is given in Appendix. \\

The first equation defines $\smallset$ as a small set for the transition kernels $(\ntrans_s)$. Note that both $\eps$ and $\nu$ can depend on $\smallset$. 
The $ \nu$-small set Equation~\eqref{eq:smallset} "in one step" also implies the $\nu$-irredu- cibility of the transition kernels and their aperiodicity (see \cite{meyntweedie}). 
The second inequality is a drift condition which states that the transition kernels tend to bring back  elements into the small set. 
As a consequence of these well known drift conditions, the transition kernels $(\ntrans_s)$ are $V$-uniformly ergodic. Moreover this property holds uniformly in $s$ in any compact subset $\Kapa\subset \Sr$. That is to say: for any compact $\Kapa\subset \Sr$, there exist $0<\rho<1$ and $0<c<\infty$ such that for all $n\in \N^*$ and $f$ such that $\|f\|_V = \sup\limits_{ x \in \mathcal{X} } \frac{\|f(x)\|}{V(x)}<\infty$:
\begin{equation}
	\sup\limits_{s\in\Kapa} \|\ntrans_s^n f (.)-\lstat_s f\|_V \leq c\rho^n \|f\|_V \,.
\end{equation}

\begin{rem}
	\label{rem:ergoPuissancep}
	The same property holds for any power $p$ of the function $V$ such that $0<p\beta<1$. Indeed, the proof follows the same lines as it can be seen in Section  \ref{sec:proofs}. This is a property that will appear useful in the sequel to prove some properties of the estimation algorithm.
\end{rem}


\subsection{Convergence property of the estimated sequence generated by the AMALA-SAEM algorithm}

We do the following assumptions on the model which are quite usual in the context of 
missing data model using  EM-like  algorithms (see \cite{DLM}, \cite{kuhnlavielle}).

For sake of simplicity we denote in the sequel $p_\te(\cdot)$ instead of 
$p(\cdot|\obs;\te)$ the posterior distribution.

\begin{itemize}
\item {(\bf{M1})} The parameter space $\Te$ is an open subset of $\Rset{\dte}$.
The complete data likelihood function is given by:
\begin{equation*} 
f(\obs,\hid;\te)
= \exp\left[-\psi(\te) + \pscal{\tS(\hid)}{\phi(\te)}\right],
\end{equation*}
where   $\tS$ is a 
Borel function on $\Rset{\dhid}$  taking its values in an open convex subset 
$\Sr$ 
of $\Rset{\dss}$. Moreover, the convex hull of $\tS(\Rset{\dhid})$ is included in 
$\Sr$, and, for all $\te$ in $\Te$,
\begin{equation*} 
\int || \tS(\hid) || p_\te(\hid) \mu(d\hid) < \infty.
\end{equation*}
\item {(\bf{M2})} 
 The functions $\psi$ and $\phi$ are twice 
continuously differentiable on $\Te$. 
\item {(\bf{M3})} 
The function $\bar{s} : \Te \rightarrow \Sr$ defined as
\begin{equation*} 
\bar{s}(\te) \triangleq \int \tS(\hid) p_\te(\hid) \mu(d\hid)
\end{equation*} 
is continuously differentiable on $\Te$.
\item {(\bf{M4})} The function $l : \Te \rightarrow \Rset{}$ defined as the observed-data log-likelihood
\begin{equation*} 
\label{l}
l(\te) \triangleq \log g(y;\te) = \log \int f(\obs,\hid;\te) \mu(d\hid)
\end{equation*} 
is continuously differentiable on $\Te$ and
\begin{equation*} 
\partial_\te \int f(\obs,\hid; \te) \mu(d\hid)= \int \partial_\te f(\obs,\hid; \te)  \mu(d\hid).
\end{equation*} 
\item {(\bf{M5})} There exists a function
$\hte : \ \Sr \rightarrow \Te$, such that:
\begin{equation*} 
\forall s \in \Sr, \ \  \forall \te \in \Te, \ \ 
L(s; \hte(s))\geq L(s; \te).
\end{equation*} 
Moreover,  the function $\hte$ is continuously differentiable 
on $\Sr$. 
\item {(\bf{M6})}      The functions $l:\Theta \to \R$ and $\hte:\Sr  \to \Theta$ are
 $\dss$ times differentiable. 
\end{itemize}   

\begin{itemize}
\item {(\bf{M7})} 
\begin{itemize}
\item[(i)] There exists an $M_0>0$ such that
\begin{equation*}
\left\{
s \in \Sr , \partial_s l(\hte(s))=0 \right\}
\subset \{
 s \in \Sr, \   -l(\hte(s)) < M_0
\}\,.
\end{equation*}
\item[(ii)] For all $M_1 > M_0$, the set $\overline{Conv(\tS(\Rset{\dhid}))} \cap \{
s \in \Sr , \ -l(\hte(s)) \leq M_1\}$
 is a compact set of $\Sr$.
\end{itemize}
\item {(\bf{M8})} 
There exists a polynomial function $P$ such that for all $\hid \in \mathcal{X}$
$$
||\tS(\hid)|| \leq P(\hid)\,.
$$
\item{(\bf{B4})} For any compact subset $\Kapa$ of $\Sr$, there exists a polynomial function $Q$ of the hidden variable such that $\sup\limits_{s\in\Kapa} | \nabla_z \log p_{\hte(s)}(z)| \leq Q(z)  $.
\end{itemize}
Moreover a usual additional assumption is required on the step size sequences of the stochastic approximation.
\begin{itemize}
	\item{\bf(A4)} The sequences $\boldsymbol\gamma=(\gamma_k)_{k\geq 0} $ and
	  $\boldsymbol\varepsilon=(\varepsilon_k)_{k\geq 0} $   are non-increasing,
	  positive and satisfy: there exist $0<a<1$ and $p\geq 2$ such that
	$\sum\limits_{k=0}^\infty \gamma_k =\infty$,
	$\lim\limits_{k\to\infty} \varepsilon_k =0$ and
	
	$ \sum\limits_{k=1}^\infty \{
	\gamma_k^2 +
	\gamma_k \varepsilon_k^a +
	(\gamma_k \varepsilon_k^{-1})^p  \}
	 <\infty$.
\end{itemize}

\begin{theorem}[Convergence Result for the Estimated Sequence generated by Algorithm \ref{algo:AMALASAEM}]
	\label{Th:convergenceAlgo2}
  Assume (\textbf{M1-M8}) and
  (\textbf{A4}). 
Assume that the family of posterior density functions $\{p_{\hte(s)} ,\ s \in \Sr\}$ satisfies
assumptions  (\textbf{B1-B4}).

Let $\mathrm{K}$ be a compact subset of $ \mathcal{X}$
and
$\Kapa_0 \subset \{
 s \in \Sr, \   -l(\hte(s)) < M_0
\} \cap \overline{Conv(\tS(\Rset{\dhid}))}$ (where $M_0$ is defined
in (\textbf{M7})). Then, for all $\hid_0 \in \mathrm{K}$ and
    $s_0\in \Kapa_0$, we have $\lim\limits_{k\to\infty} d(\te_k,\mathcal{L})=0$ a.s. where
$(\te_k)_k$ is the sequence generated by Algorithm \ref{algo:AMALASAEM} and  $\mathcal{L}\triangleq \{\te \in \Te, \partial_\te l(\te)=0\}$.  
\end{theorem}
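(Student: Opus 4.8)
The plan is to cast Algorithm~\ref{algo:AMALASAEM} as an instance of the stochastic approximation with truncation on random boundaries studied in \cite{andrieumoulinespriouret}, and to invoke their convergence theorem once its hypotheses have been verified. Concretely, I would identify the mean field of the stochastic approximation as $h(s) = \bar{s}(\hte(s)) - s$, so that the deterministic scheme $\dot{s} = h(s)$ admits $w(s) = -l(\hte(s))$ as a Lyapunov function: indeed, by (\textbf{M1-M6}) the standard EM computation gives $\langle \partial_s w(s), h(s)\rangle \leq 0$ with equality only on $\Gamma \triangleq \{s : \partial_s l(\hte(s)) = 0\}$, and (\textbf{M7}) guarantees that $w(\Gamma)$ has empty interior and that the relevant level sets are compact in $\Sr$. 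The image of $\Gamma$ under $\hte$ is exactly $\mathcal{L}$, so establishing $d(s_k,\Gamma)\to 0$ a.s.\ yields the claim after composing with the continuous map $\hte$.

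The core of the argument is the verification of the "noise control" conditions of \cite{andrieumoulinespriouret}, i.e.\ that for the parameterized family of Markov kernels $(\ntrans_s)_{s\in\Sr}$ there exist solutions $g_s$ to the Poisson equation $g_s - \ntrans_s g_s = \tS - \lstat_s\tS$ with good Lipschitz-in-$s$ bounds, controlled by a drift function of the form $V^p$. This is precisely where Proposition~\ref{ergogeo} and Remark~\ref{rem:ergoPuissancep} enter: the uniform-on-compacts $V$-geometric ergodicity (and its version for powers $V^p$ with $0<p\beta<1$) gives the existence and the uniform bounds on $g_s$, while (\textbf{B2}) and (\textbf{B4}) provide the continuity/Lipschitz regularity of $s\mapsto \ntrans_s$ and of $s\mapsto g_s$ in $V$-norm; assumption (\textbf{M8}) together with (\textbf{B4}) ensures that $\tS$ and $\nabla\log p_{\hte(s)}$ are dominated by $V$-controlled (polynomial) functions, so that all the expectations appearing in the decomposition of the stochastic approximation error are finite and suitably bounded. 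The truncation mechanism of Algorithm~\ref{algo:AMALASAEM} (the compacts $\Kapa_q$, the increment test $\|\bass - \ass_{k-1}\|\le\varepsilon_{\zeta_{k-1}}$, the re-projection onto $\mathrm{K}\times\Kapa_0$) is exactly the stabilization device of \cite{andrieumoulinespriouret}, and (\textbf{A4}) gives the summability of $\gamma_k^2$, $\gamma_k\varepsilon_k^a$ and $(\gamma_k\varepsilon_k^{-1})^p$ that their theorem requires.

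Assembling these pieces, I would write the one-step decomposition $s_{k} - s_{k-1} = \gamma_k\big(h(s_{k-1}) + e_k + r_k\big)$ (modulo re-projection), where $e_k$ is a martingale-increment term handled by the Poisson equation and $r_k$ is a remainder controlled by the $V$-norm Lipschitz bounds on $g_s$ and by (\textbf{A4}); one then checks that almost surely the number of re-projections is finite, that the noise series converges, and concludes via the Lyapunov function $w$ that $d(s_k,\Gamma)\to 0$, hence $d(\te_k,\mathcal{L})\to 0$. The main obstacle is the uniform control of the Poisson solution $g_s$ and its modulus of continuity in $s$ on compact subsets of $\Sr$ in the weighted $V$-norm: this is the nontrivial step that genuinely uses the full strength of Proposition~\ref{ergogeo} (including the power-$p$ version of Remark~\ref{rem:ergoPuissancep}), together with assumptions (\textbf{B2}), (\textbf{B4}) and (\textbf{M8}) to dominate the drift term $D(\cdot,\te)$ and the statistic $\tS$; once this regularity is in hand, the remaining verifications are routine applications of the framework of \cite{andrieumoulinespriouret}.
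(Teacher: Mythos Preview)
Your proposal is correct and follows essentially the same route as the paper: the authors also reduce to the stochastic-approximation-with-truncation framework (via Theorem~4.1 of \cite{aktdefmod}, itself built on \cite{andrieumoulinespriouret}), take $w(s)=-l(\hte(s))$ as Lyapunov function using (\textbf{M1})--(\textbf{M7}), and then verify the noise conditions from the uniform $V$-geometric ergodicity of Proposition~\ref{ergogeo} together with (\textbf{M8}) and (\textbf{B4}). The one place where the paper is more explicit than your sketch is precisely the ``main obstacle'' you flag: the $V^q$-norm Lipschitz bound $\|\ntrans_{\hte(s)}f-\ntrans_{\hte(s')}f\|_{V^q}\le C_\Kapa\|f\|_{V^p}\|s-s'\|$ is established by hand (splitting over the acceptance/rejection regions $A_s(x)$, $A_{s'}(x)$ and their intersections, and using the mean value theorem on $s\mapsto q_s$ and $s\mapsto\rho_s q_s$), rather than being read off directly from (\textbf{B2})--(\textbf{B4}).
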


The proof is postponed to Appendix \ref{subsec:Convergence}.

\subsection{Central Limit Theorem for the estimated sequence generated by the AMALA-SAEM}
\label{subsec:TCL}

Theorem \ref{Th:convergenceAlgo2} ensures that the number of re-initiali- zations of the sequence of stochastic approximation of Algorithm \ref{algo:AMALASAEM} is finite almost surely.  We can therefore consider only the non truncated sequence when we are interested in its asymptotic behavior. 

Let us write the 
stochastic approximation procedure :
$$
s_k=s_{k-1}+\gamma_k h(s_{k-1})+ \gamma_k \eta_k
$$
where $H_s(z)=S(z)-s$, $h(s) = \mathbb{E}_{p_{\hte(s)}}(H_s(z)) $, $\eta_k=\tS(\hid_k) - \mathbb{E}_{p_{\hte(s_{k-1})}} (\tS(\hid))$ and $\mathbb{E}_{p_{\hte(s)}}$ is the expectation under the invariant measure $p_{\hte(s)}$.

Let us introduce some usual assumptions in the spirit of these of Delyon (see \cite{DelyonCours}).

\begin{itemize}
	\item[\textbf{(N1)}]  The sequence $(s_k)_k$ converges to $s^*$ a.s.  The function $h$ is $C^1$ in some neighborhood of $s^*$ with first derivatives Lipschitz and $J$ the Jacobean matrix of the mean field $h$ in $s^*$ has all its eigenvalues with negative real part.  
		\item[\textbf{(N2)}]  
		Let $g_{\hte(s)}$ be a solution of the Poisson equation $g-\ntrans_{\hte(s)} g = H_s - p_{\hte(s)} (H_s)$ for any $s\in\Sr$. 
		There exists a bounded function $w$ such that 
		
		\begin{multline}
			w-\ntrans_{\hte(s^*)}w = g_{\hte(s^*)}g_{\hte(s^*)}^T - \\
			\ntrans_{\hte(s^*)}g_{\hte(s^*)}(\ntrans_{\hte(s^*)}g_{\hte(s^*)})^T - U
			\end{multline} where the deterministic matrix $U$ is given by~:
		\begin{multline}
			\label{eq:defU}
			U= \mathbb{E}_{\hte(s^*)}\left[ g_{\hte(s^*)}(z)g_{\hte(s^*)}(z)^T - \right.\\ 
			\left. \ntrans_{\hte(s^*)}g_{\hte(s^*)}(z)\ntrans_{\hte(s^*)}g_{\hte(s^*)}(z)^T  \right]\,.
		\end{multline} 
		
	\item[\textbf{(N3)}] The step size sequence $(\gamma_k)$  is decreasing and satisfies 
	$\gamma_k = 1/k^\alpha $ with $2/3<\alpha<1$.
	
\end{itemize}
\hspace{1cm}

\begin{theorem}
	\label{Th:TCL}
	Under the assumptions of Theorem \ref{Th:convergenceAlgo2} and under \textbf{(N1)-(N3)}, the sequence \\
	$(s_k-s^*)/\sqrt{\gamma_k}$ converges in distribution to a Gaussian random vector with zero mean and covariance matrix  $\Gamma$ where
	$\Gamma$ is the solution of the following Lyapunov equation: 
	 $$U+J\Gamma +\Gamma J^T =0.$$
	Moreover,
	$$
	\frac{1}{\sqrt{\gamma_k}}(\te_k-\te^*)\to_{\mathcal{L}}\mathcal{N}(0,\partial_s \hte(s^*) \Gamma\partial_s \hte(s^*)^T)
	$$
	where $\theta^* = \hte(s^*)$.
\end{theorem}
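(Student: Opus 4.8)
The plan is to cast the stochastic approximation recursion as a Robbins--Monro scheme with Markovian noise and to invoke the general central limit theorem for such procedures (in the spirit of Delyon, see \cite{DelyonCours}, or Benveniste--M\'etivier--Priouret), whose hypotheses we must verify one by one using the ergodicity results already at our disposal. Write the non-truncated recursion as $s_k=s_{k-1}+\gamma_k h(s_{k-1})+\gamma_k\eta_k$ with $\eta_k = \tS(\hid_k)-\mathbb{E}_{p_{\hte(s_{k-1})}}(\tS(\hid))$, and decompose $\eta_k$ via the Poisson equation: writing $g_{\hte(s)}$ for the solution of $g-\ntrans_{\hte(s)}g = H_s - p_{\hte(s)}(H_s)$, one gets the standard split
\begin{equation*}
\eta_k = \underbrace{g_{\hte(s_{k-1})}(\hid_k) - \ntrans_{\hte(s_{k-1})}g_{\hte(s_{k-1})}(\hid_{k-1})}_{\text{martingale increment}} + r_k
\end{equation*}
where $r_k$ gathers the ``border'' terms $\ntrans_{\hte(s_{k-1})}g_{\hte(s_{k-1})}(\hid_{k-1})-\ntrans_{\hte(s_k)}g_{\hte(s_k)}(\hid_k)$ plus the difference coming from $s_{k-1}\mapsto s_k$. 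The first step is thus to establish that the Poisson equation has a solution with good control: by Proposition \ref{ergogeo} and Remark \ref{rem:ergoPuissancep}, the kernels $\ntrans_{\hte(s)}$ are $V^p$-uniformly ergodic uniformly for $s$ in compacts, so $g_{\hte(s)}$ exists, is dominated by a multiple of $V^p$, and depends regularly on $s$ (Lipschitz in $s$ uniformly on compacts, using (B2), (M3), (M5) and the continuity of $s\mapsto\ntrans_{\hte(s)}$); assumption (B4) together with (M8) controls the polynomial growth of $H_s$ so that the relevant moments $\mathbb{E}_{p_{\hte(s)}}[V^{p}]$ and $\mathbb{E}[V^p(\hid_k)]$ are finite and bounded along the trajectory (this last uniform moment bound is a by-product of the drift inequality \eqref{eq:drift}).

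Next I would check the CLT hypotheses. The almost sure convergence $s_k\to s^*$ is exactly Theorem \ref{Th:convergenceAlgo2} (applied on the event that the truncation stops, which has probability one), giving the first half of (N1); the spectral condition on the Jacobian $J$ of $h$ at $s^*$ and the $C^1$-with-Lipschitz-derivative regularity are assumed in (N1)--(N2). The step-size condition (N3), $\gamma_k=k^{-\alpha}$ with $2/3<\alpha<1$, is the standard window guaranteeing both that the martingale part dominates and that the remainder $r_k$ is negligible after summation. One then verifies: (a) the martingale increments have conditional second moments converging to a limiting covariance --- here the key point is that, by ergodicity, $\mathbb{E}[\,g_{\hte(s_{k-1})}(\hid_k)g_{\hte(s_{k-1})}(\hid_k)^T\mid\mathcal{F}_{k-1}]$ can be replaced, up to vanishing errors, by $\ntrans_{\hte(s^*)}(g_{\hte(s^*)}g_{\hte(s^*)}^T)(\hid_{k-1})$, and the function $w$ of assumption (N2) is precisely the device that allows one to further replace this by the deterministic matrix $U$ of \eqref{eq:defU} (a second Poisson-type equation absorbs the fluctuation); (b) a Lindeberg/Lyapunov condition on the martingale increments, which follows from the uniform $V^{2p}$-moment bounds and the polynomial growth; (c) the perturbation terms $r_k$ satisfy $\sum_k \gamma_k\|r_k\|<\infty$ (telescoping plus $|s_k-s_{k-1}|=O(\gamma_k)$ and Lipschitz dependence of $g_{\hte(s)}$ on $s$) so they do not contribute to the limit. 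Assembling (a)--(c) in the Delyon CLT yields that $(s_k-s^*)/\sqrt{\gamma_k}$ is asymptotically $\mathcal{N}(0,\Gamma)$ with $\Gamma$ solving the Lyapunov equation $U+J\Gamma+\Gamma J^T=0$.

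Finally, the statement on $\te_k=\hte(s_k)$ is obtained by the delta method: since $\hte$ is continuously differentiable on $\Sr$ by (M5) and $s_k\to s^*$, a first-order Taylor expansion gives $\te_k-\te^* = \partial_s\hte(s^*)(s_k-s^*)+o(|s_k-s^*|)$, and dividing by $\sqrt{\gamma_k}$ transfers the Gaussian limit, producing the covariance $\partial_s\hte(s^*)\,\Gamma\,\partial_s\hte(s^*)^T$. The main obstacle, in my view, is not the abstract CLT machinery but the analytic control of the Poisson solution $g_{\hte(s)}$: one must show simultaneously that it inherits polynomial (in fact $V^p$-type) growth uniformly in $s$ on compacts and that it is Lipschitz in $s$, and the latter requires differentiating the ergodic-average representation $g_{\hte(s)}=\sum_{n\ge 0}(\ntrans_{\hte(s)}^n - p_{\hte(s)})(H_s)$ with respect to $s$ and bounding the resulting series using the uniform geometric contraction of Proposition \ref{ergogeo}, the regularity (B2), (B4), and the differentiability (M2), (M3), (M5) of the model. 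Everything else --- the martingale CLT, the negligibility of remainders, the delta method --- is then routine given the moment bounds furnished by the drift condition \eqref{eq:drift} and assumption (M8).
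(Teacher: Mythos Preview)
Your plan is essentially the paper's proof: both cast the recursion in the Robbins--Monro form, decompose $\eta_k$ through the Poisson solution $g_{\hte(s)}$, invoke Delyon's CLT (Theorem 24/25 of \cite{DelyonCours}, slightly adapted since the A-stability assumption is replaced by \textbf{(N1)}), control moments via the drift inequality of Proposition~\ref{ergogeo}, handle the $s$-dependence of the kernel through the H\"older lemmas already proved for Theorem~\ref{Th:convergenceAlgo2}, use \textbf{(N2)} to identify $U$, and finish with the delta method.

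Two small corrections are worth making. First, the condition you state for the remainder, $\sum_k\gamma_k\|r_k\|<\infty$, is the condition for almost sure convergence, not for the CLT; what the Delyon framework requires (and what the paper verifies) is the rate condition $\gamma_k^{-1/2}\|r_k\|_1\to 0$. Your own estimate $|s_k-s_{k-1}|=O(\gamma_k)$ combined with the H\"older regularity of $s\mapsto\ntrans_{\hte(s)}g_{\hte(s)}$ gives $\|r_k\|_1\leq C\gamma_k^a$, and one then needs $a>1/2$ --- this is exactly why \textbf{(N3)} imposes $\alpha>2/3$. Second, the paper splits your remainder more finely as $\eta_k=\xi_k+(\nu_k-\nu_{k-1})+r_k$ with $\nu_k=-\ntrans_{\hte(s_k)}g_{\hte(s_k)}(\hid_k)$; the telescoping piece $\nu_k-\nu_{k-1}$ is handled separately via $\gamma_k^{1/2}\|\nu_k\|_1\to 0$ (which follows from uniform boundedness of $\mathbb{E}\|\nu_k\|$ and $\gamma_k\to 0$), not through the H\"older argument. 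Keeping these two pieces distinct makes the verification cleaner.
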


The proof of Theorem \ref{Th:TCL} is given in Appendix \ref{appendix:TCL}.

\section{Applications on Bayesian Mixed Effect Template model} 
\label{sec:appli}
\subsection{Comparison between MALA and AMALA samplers} 
\label{sub:comparison_between_mala_and_amala_samplers}
As a first experiment, we compare the mixing properties of  MALA and  AMALA samplers. We used both algorithms to sample from a $10$ dimensional normal distribution with zero mean and non diagonal covariance matrix. Its eigenvalues range from $1$ to $10$. The eigen-directions are chosen randomly. The autocorrelations of both chains are plotted in Fig.~\ref{fig:ComparisonMALA_AMALA} where we can see that there is a benefit of using the anisotropic sampler. 
To evaluate the weight of the anisotropic term  $D(x)D(x)^T$ in the covariance matrix, we compute its amplitude (as its non zero eigenvalue since it is a rank one matrix). We see that it is of the same order as the diagonal part in average and jumps up to $15$ times bigger.
 This shows the importance of the anisotropic term. 
The last check is the Mean Square Euclidean Jump Distance (MSEJD) which computes the expected squared distance between successive draws of the Markov chain. The two methods provide MSEJD of the same order showing a very slight advantage in term of visiting the space for the AMALA sampler ($1.29$ versus $1.25$ for the MALA).

\begin{figure*}[\begin{figure}[htbp]
	\centering
		\includegraphics[height=3cm,width=\textwidth]{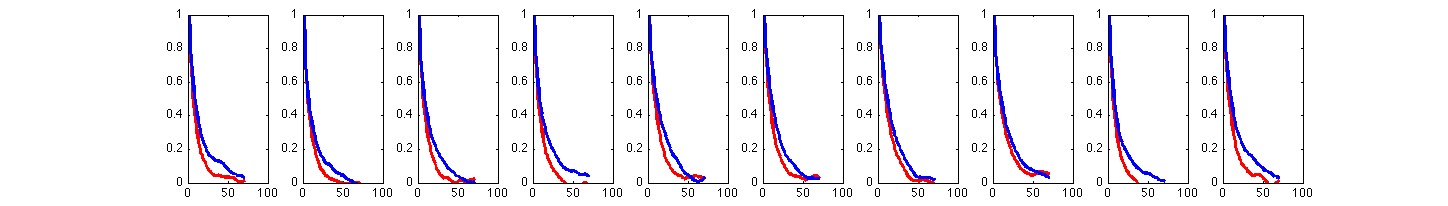}
	\caption{Autocorrelations of the MALA (blue) and AMALA (red) samplers to target the $10$ dimensional normal distribution with anisotropic covariance matrix.}
	\label{fig:ComparisonMALA_AMALA}
\end{figure*}

We will observe in the following experiments that the advantage of considering the AMALA instead of the MALA sampler will be intensified when increasing the problem dimension and including it into our estimation process.


\subsection{BME Template estimation} 
\label{sub:bme_template_estimation}

Back to our targeted application, we apply the proposed estimation process on different data bases. The first one is the USPS hand-written digit base as used in \cite{AAT} and \cite{aktdefmod}. The other two are medical images of 2D corpus callosum and 3D murine dendrite spine excrescences used in \cite{aktSFDS2010}.\\

We begin with presenting the experiments on the USPS database. In order to make comparison, we estimate the parameters in the same conditions as in the previous mentioned works that is to say using the same $20$ images per digit. Each image has grey level between $0$ (background) and $2$ (bright white). These images are presented on the top panel of Fig.~\ref{fig:pics_trainingset}. We also use a noisy training dataset generated by adding a standardized independent Gaussian noise. These images are presented on the bottom panel of Fig.~\ref{fig:pics_trainingset}.
We test five algorithms: the deterministic approximation of the EM algorithm  (FAM-EM) presented in \cite{AAT}, four MCMC-SAEM where the sampler is either the MALA, the adaptive MALA proposed in \cite{atchade2006}, the hybrid Gibbs sampler presented in \cite{aktdefmod} and our AMALA algorithm.
\begin{figure}[htbp]\begin{center}
\includegraphics[width=6cm,height=5cm]{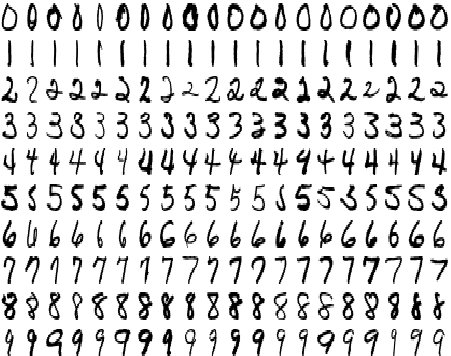}\\
\vspace{0.5cm}
\includegraphics[width=6cm,height=5cm]{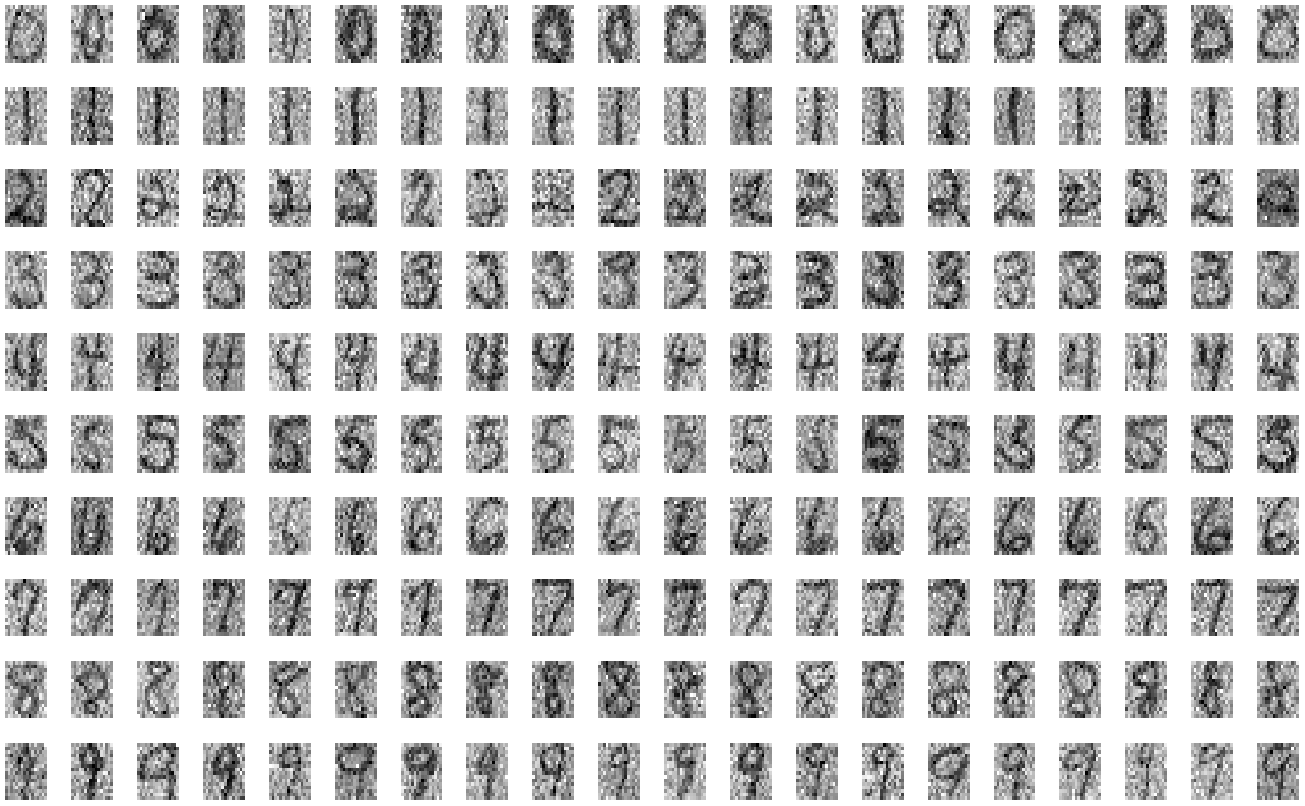}
\end{center}
\caption{Top: twenty images per digit of the training set used for the estimation of the model
  parameters (inverse video). Bottom: same images with additive noise of variance $1$.}
\label{fig:pics_trainingset}
\end{figure}

For these experiments the tuning parameters are chosen as follows: the threshold $b$ is set to $1,000$, the scale $\delta$ to $10^{-3}$ and the regularization $\varepsilon$ to $10^{-4}$. The other tuning parameters and hyper-parameters are chosen as in \cite{aktdefmod}.

Note that this model satisfies the conditions of our convergence theorem as these conditions are similar to the ones proved in \cite{aktdefmod}. 

\subsection{Computational performances} 
\label{sub:computational_performances}

We compare first the computational performances of the algorithms.
The computational time is smaller for the three MCMC-SAEM algorithms using "MALA-like" samplers compared to the FAM. 
Indeed, a numerical convergence of that algorithm requires about $30$ to $50$ EM steps. Each of them requires a gradient descent which has $15$ iterations in average. This implies to compute $15$ times the gradient of the energy (which actually equals our gradient) for each image for each EM step. The "MALA-like"-SAEM algorithms require about $100$ to $150$ EM steps (depending on the digit) but only one gradient is computed for each image at each step. This reduces the computational time by a factor of at least $4$ (up to $7$ depending on the digit). No comparison can be done when the data are noisy since the FAM-EM does not converges toward the MAP estimator as mentioned above.
Comparing to the hybrid Gibbs-SAEM, the computational time is $8$ times lower with the AMALA-SAEM in this particular case of application. Indeed, the hybrid Gibbs sampler requires no computation of the gradient. However, it includes a loop over the coordinates of the hidden variable, here the deformation vector of size $2k_g=72$. At each of these iterations, the candidate is straightforward to sample whereas the computational cost lies into the acceptance rate. When this becomes heavy, the less times you calculate it, the better. In the AMALA-SAEM, this acceptance rate only has to be calculated once for each image. Therefore, even when the dimension of the hidden variable increases, this is of constant cost. The main price to pay is the computation of the gradient. Therefore, a tradeoff has to be found between the computation of either one gradient or $dk_g$ acceptance rates in order to select the algorithm to use in a given case.


\subsection{Results on the template estimation}

All the estimated templates obtained with the five algorithms and noise-free and noisy training data are presented in Fig.~\ref{fig:pics_templatesOfDigs}.
As noticed in \cite{aktdefmod}, the FAM-EM estimation is sharp when the training set is noise-free and is deteriorated while adding noise. This behavior is not surprising with regard to the theoretical bound established in \cite{bc2011} in the particular case of compact deformation group.
Considering the adaptive sampler, it does not reach a good estimation of the templates which are still very blurry and noisy in both cases. The problem seems to come from the very low acceptation rate already at the beginning of the estimation. The bad initial guess we have about the covariance matrix of the proposal seems to block the chain. Moreover, the tuning parameters are difficult to calibrate along the iterations of the estimation algorithm. 
Concerning the estimated templates using the Gibbs, MALA and  AMALA samplers, they look very similar to each other using the noise-free data as well as the noisy ones. This similarity confirms the convergence of all these algorithms toward the MAP estimator. In this case, the templates are as expected: noise free and sharp.

Nevertheless, when the dimension of the hidden variable increases, both the Gibbs and the MALA samplers show limitations. We run the estimation on the same noisy USPS database, increasing the number $k_g$ of geometrical control points. We choose the dimension of the deformation vector equal to $72$, $128$ and $200$. 
The Gibbs-SAEM would produce sharp estimations but explodes the computational time. For this reason, we did not run this algorithm on higher dimension experiments. The results are presented in Fig.~\ref{fig:pics_templateskghigh}.
Concerning the MALA sampler, it does not seem to capture the whole variability of the population in such high dimension. This yields a poorly estimation of the templates. This phenomenon does not appear using our AMALA-SAEM algorithm. The templates still look sharp and the acceptation rates remain reasonable.

\begin{figure*}[htbp]
	\centering
	\begin{tabular}{c|c|c|c|c|c}
		\raisebox{0cm}{\parbox{1.5cm}{\centering Algo./\\Noise}}&	FAM & Hybrid  Gibbs  & MALA &  Adaptive MALA  & AMALA \\\\
		\hline
		\\
		\raisebox{1cm}{\parbox{1.5cm}{\centering No \\ Noise}}&
		\includegraphics[height=2cm]{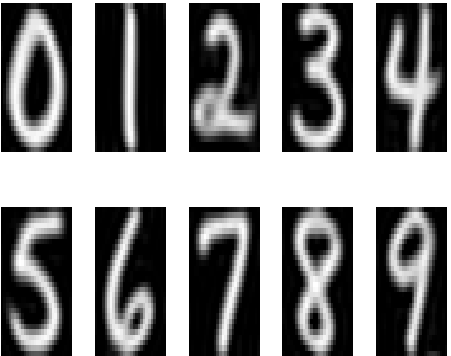}&
		\includegraphics[height=2cm]{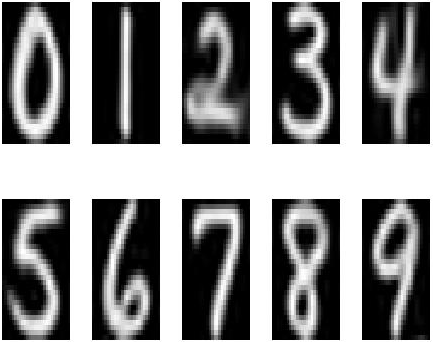}&
		\includegraphics[height=2cm]{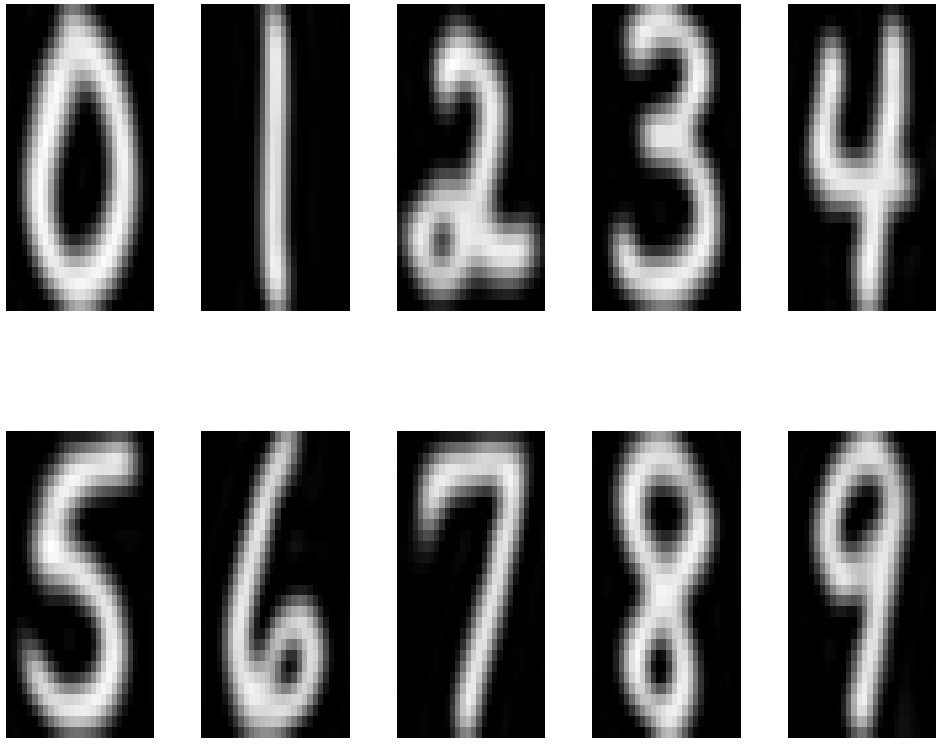}&
		\includegraphics[height=2cm]{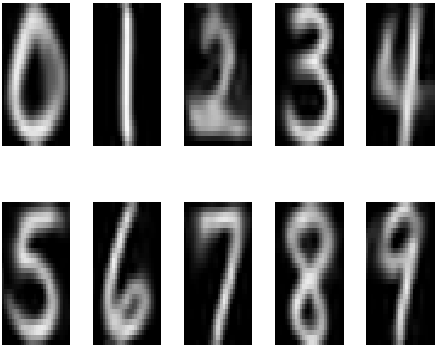}& 
		\includegraphics[height=2cm]{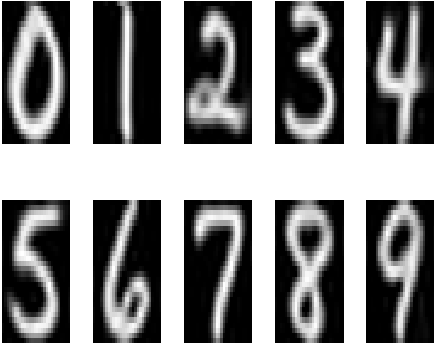}\\		
		\hline
		\\
		\raisebox{1cm}{\parbox{1.5cm}{\centering  Noise\\ }}&
		\includegraphics[height=2cm]{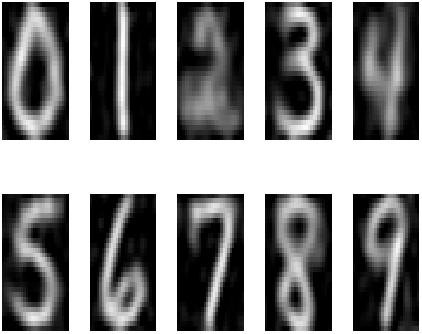}&
		\includegraphics[height=2cm]{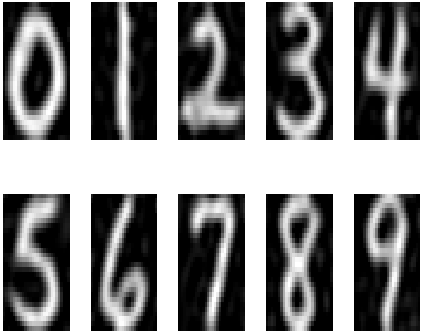}&
		\includegraphics[height=2cm]{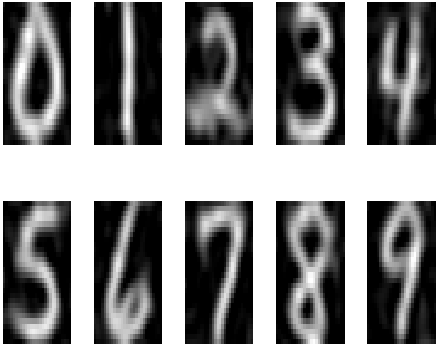}&
		\includegraphics[height=2cm]{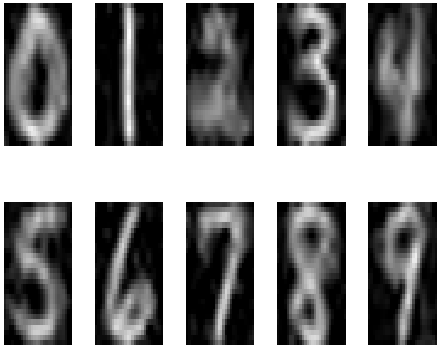}& 
		\includegraphics[height=2cm]{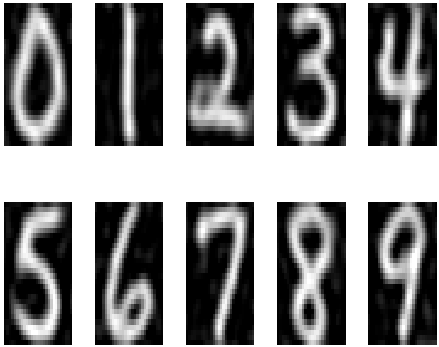}\\
		\hline
	\end{tabular}
	\caption{Estimated templates using the five algorithms on  noise free and noisy data. The training set includes $20$ images per digit. The dimension of the hidden variable is $72$.}
	\label{fig:pics_templatesOfDigs}
\end{figure*}

\begin{figure*}[htbp]
	\centering
	\begin{tabular}{c|c|c|c}
		\raisebox{0cm}{\parbox{1.5cm}{Dim. of def. / Sampler}} & $2k_g=72$ & $2k_g=128$ & $2k_g=200$ \\
		\hline
		\\
		 \raisebox{1cm}{\parbox{1.5cm}{\centering MALA\\ }} &
		\includegraphics[height=2cm]{templateMALA18.png}&
		\includegraphics[height=2cm]{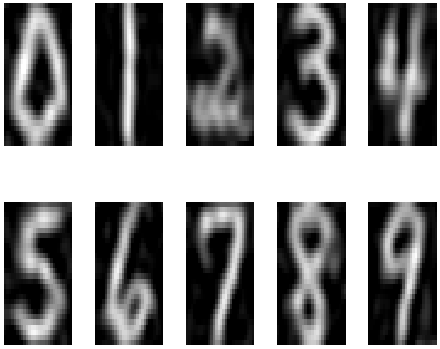}&
		\includegraphics[height=2cm]{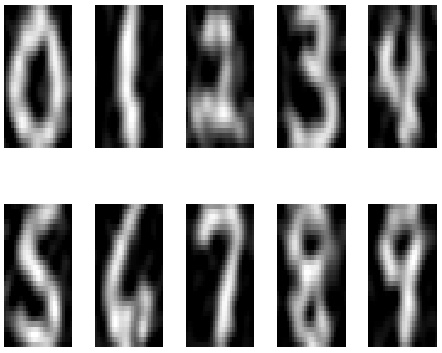}\\
		\hline
		\\
		\raisebox{1cm}{\parbox{1.5cm}{\centering AMALA\\ }} &
		\includegraphics[height=2cm]{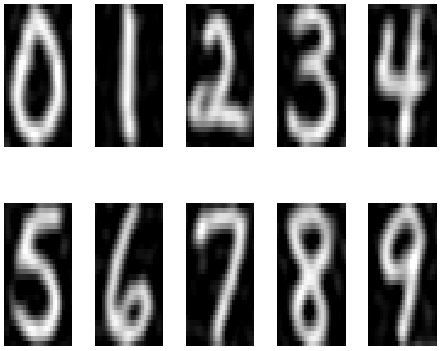}&
		\includegraphics[height=2cm]{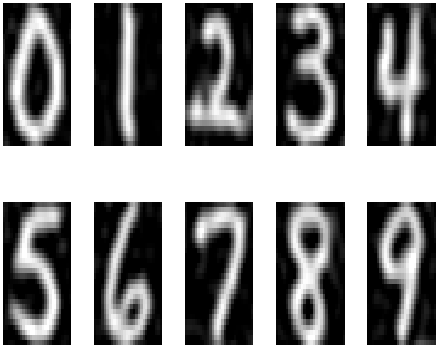}&
		\includegraphics[height=2cm]{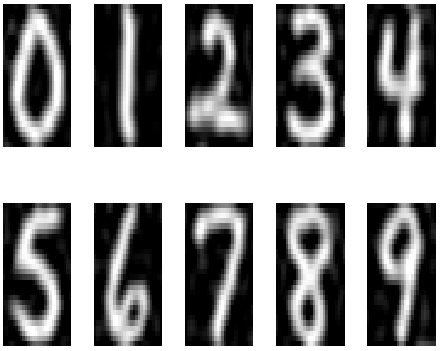}\\
		\hline
	\end{tabular}
	\caption{Estimated templates using MALA and AMALA samplers in the stochastic EM algorithm on noisy training data. The training set includes $20$ images per digit. The dimension of the hidden variable increases from $72$ to $200$.}
	\label{fig:pics_templateskghigh}
\end{figure*}

\subsection{Results on the covariance matrix estimation}
\label{subsec:covar}

Since we are provided with a generative model, once the parameters have been estimated, we can generate synthetic samples in order to evaluate the constrained on the deformations that have been learnt. Some of these samples are presented in Fig.~\ref{fig:pics_SamplesOfDigs}. For each digit, $20$ examples are generated with the deformations given by $+z$ and $20$ others with $-z$ where $z$ is simulated with respect to $\mathcal{N}(0,\Gamma_g)$. We recall that, as already noticed in \cite{aktdefmod}, the Gaussian distribution is symmetric which may lead to strange samples in one direction whereas the other one looks like something present in the training set.

With regards to the above remarks concerning the computational time and the template estimations, we present in this subsection only the results obtained using MALA and AMALA-SAEM algorithms. We notice that the samples generated by both algorithms look alike in the case of hidden variable of dimension $72$. Thus, we present only the results of our AMALA-SAEM estimation.
As we can see, the deformations are very well estimated in both cases (without or with noise) and even look similar. This tends to demonstrate that the noise has been separated from the template as well as the geometric variability during the estimation process.
\begin{figure*}[htbp]
	\centering
		\includegraphics[height=7cm,width=5cm]{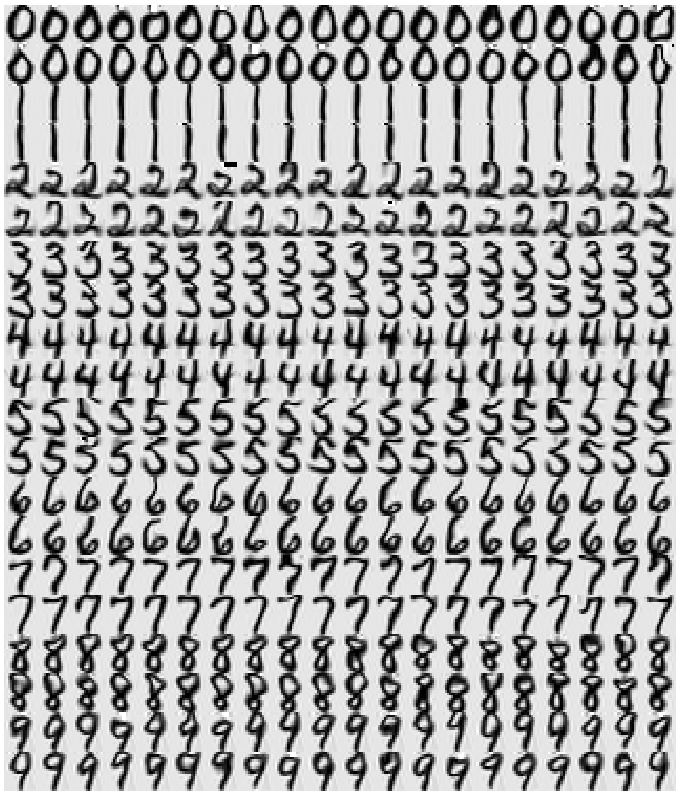}
		\includegraphics[height=7cm,width=5cm]{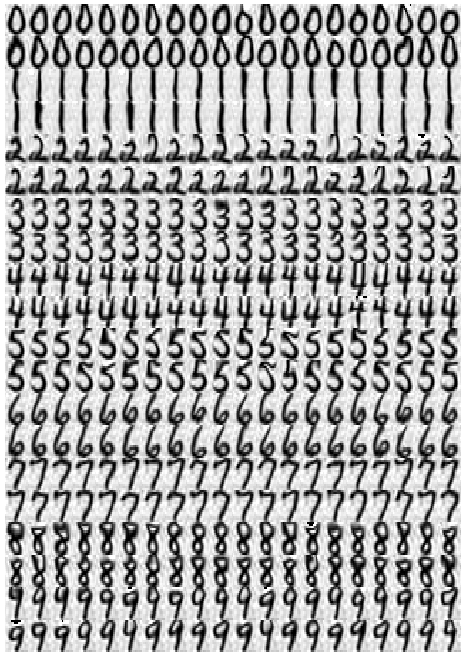}
	\caption{Synthetic samples generated with respect to the BME template model using the estimated parameters with AMALA-SAEM. For each digit, the two lines represent the deformation using $+$ and $-$ the simulated deformation $z$. Left: data without noise. Right: data with noise variance $1$. The number of geometric control points is $36$ leading to a hidden variable of dimension $72$.}
	\label{fig:pics_SamplesOfDigs}
\end{figure*}

Increasing the dimension of the deformation to $128$, we run both algorithms on the noisy dataset. We observe on Fig.~\ref{fig:pics_SamplesOfDigsHighDim} that the geometric variability of the samples remains similar to the one obtained in lower dimension using our AMALA-SAEM. However, the MALA-SAEM does not manage to capture the whole variability of the deformations which is related to the results observed above on the template. This confirms the limitation of the use of MALA-SAEM in higher dimension.

\begin{figure*}[htbp]
	\centering
		\includegraphics[height=7cm,width=5cm]{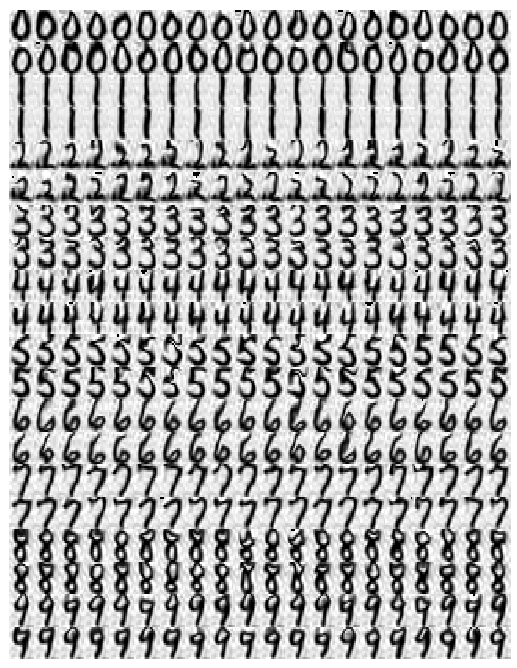}
		\includegraphics[height=7cm,width=5cm]{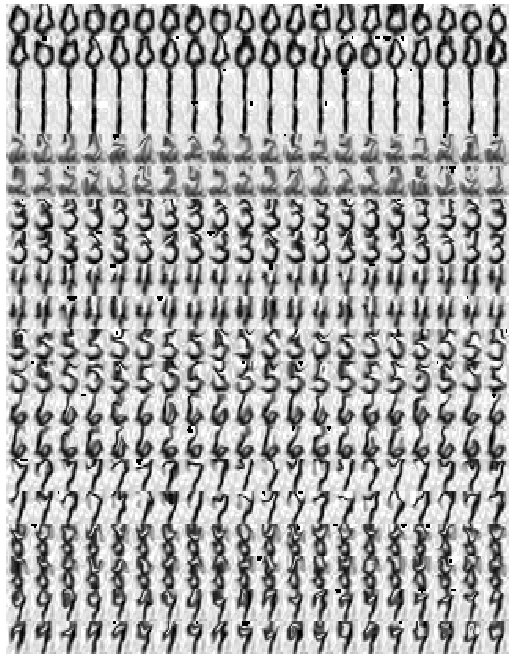}
	\caption{Synthetic samples generated with respect to the BME template model using the estimated parameters with AMALA-SAEM (left) and MALA-SAEM (right). For each digit, the two lines represent the deformation using $+$ and $-$ the simulated deformation $z$. The number of geometric control points is $64$ leading to a hidden variable of dimension $128$.}
	\label{fig:pics_SamplesOfDigsHighDim}
\end{figure*}

\subsection{Results on the noise variance estimation}

The last check of the accuracy of the estimation relies in the noise variance estimation. The plots of their evolutions along the AMALA-SAEM iterations for each digit in both cases (without and with noise) are presented in Fig.~\ref{fig:pics_evolSigmaDigs}. This variance is underestimated in particular in the noisy case, which is a well-known effect of the maximum likelihood estimator. We observe that  the geometrically very constrained digits as $1$ or $7$ tend to converge very quickly whereas the digits $2$ and $4$ require more iterations to capture all the shape variability. 

Since this is a real parameter, we used it to illustrate the Central Limit Theorem stated in Subsection~\ref{subsec:TCL}. Figure~\ref{fig:pics_HistSigma} and Figure~\ref{fig:pics_HistSigma2} show the histograms of $10,000$ runs of the algorithm with the same initial conditions. 
We use the digits $0$ and $2$ of the original data set as well as of the noisy data.
As the iterations go along, the distribution of the estimates tends to  look like a Gaussian distribution centered in the estimated noise variances which demonstrates empirically the Central Limit Theorem.

\begin{figure}[htbp]
	\centering
	\includegraphics[height=5.5cm]{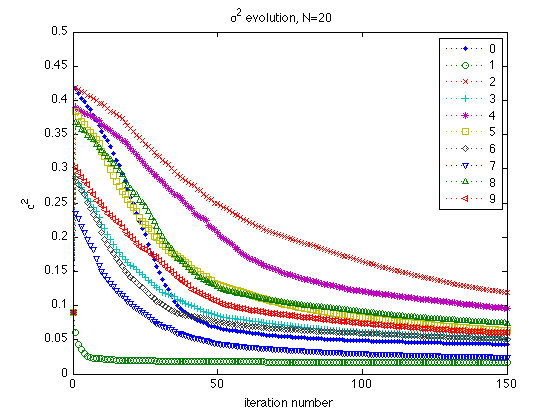}
	\includegraphics[height=5.5cm]{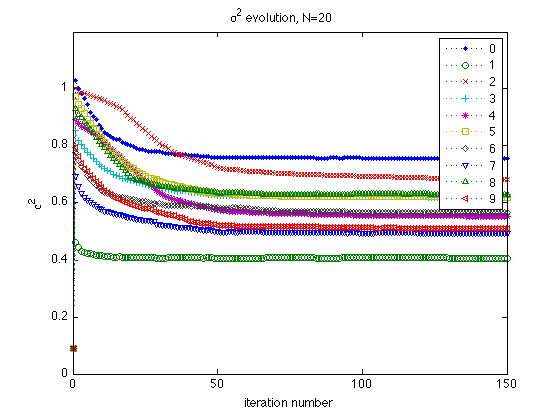}
	\caption{Evolution of the estimation of the noise variance along the AMALA-SAEM iterations. Top: original data. Bottom: noisy data.}
	\label{fig:pics_evolSigmaDigs}
\end{figure}

\begin{figure*}[htbp]
	\centering
	\includegraphics[height=4cm, width=7cm]{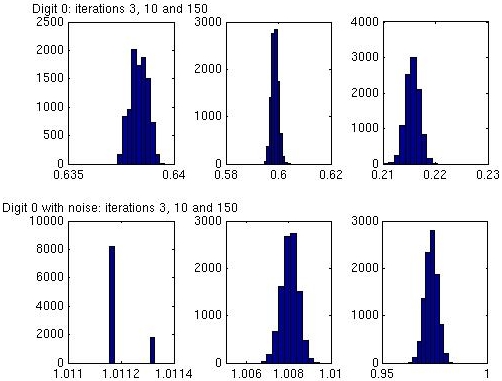} 
	\caption{Empirical convergence toward the Gaussian distribution of the estimated noise variance along the AMALA-SAEM iterations for digit $0$. Top: original data. Bottom: noisy data. }
	\label{fig:pics_HistSigma}
\end{figure*}
\begin{figure*}[htbp]
	\centering
	\includegraphics[height=4cm, width=7cm]{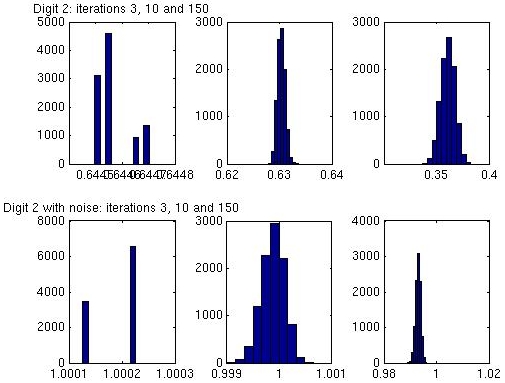}
	\caption{Empirical convergence toward the Gaussian distribution of the estimated noise variance along the AMALA-SAEM iterations for digit $2$. Top: original data. Bottom: noisy data.}
	\label{fig:pics_HistSigma2}
\end{figure*}

\subsection{Classification results} 
\label{sub:classification_results}
The deformable template model enables to perform classification using the maximum likelihood of a new image to allocate it to one class, here the digit.
We use the test USPS database (which contains $2007$ digits) for classification while the training was done on the previous $20$ noisy images. The results obtained with the hybrid Gibbs, MALA and AMALA-SAEM are presented in Table~\ref{table:classif}. In dimension $72$, the best classification rate is performed by the hybrid Gibbs-SAEM. This is easily understandable since the sampling scheme enables to catch deformations which have been optimized control point by control point. Therefore, the estimated covariance matrix carries more local accuracy.
The AMALA-SAEM leading to a much smaller computation time and to estimates of the same quality provides also a very good classification rate. This confirms the good results observed on both the template estimates and the synthetic samples. 
Unfortunately, the MALA-SAEM shows again some limitations. Even if the templates look acceptable, the sampler does not manage to capture the whole class variability. Therefore, the classification rate falls down. 

In order to evaluate the stability of our estimation algorithm with respect to the dimension, we perform the same classification with more control points. As expected, the MALA-SAEM classification rate is deteriorated whereas our AMALA-SAEM keeps very good performances. Note that the hybrid Gibbs sampler was not tested in dimension $2k_g = 128$ because of its very long computational time.

\begin{table}[htbp]
\begin{center}
\begin{tabular}{c | c | c | c}
Sampler  / \\
Dim. of def.   & Hybrid Gibbs & MALA   &  AMALA  \\
 & & &  \\ \hline
$72$ &      $ 22.43  $    &  $ 35.98 $     & $23.22 $  \\
& & & \\
$128$ &   $ \times $ & $43.8$ & $ 25.36$
\end{tabular}
\end{center}
\caption{Error rate using the estimations on the noisy training set with respect to the sampler used in the MCMC-SAEM algorithm and the dimension of the deformation $2k_g$. The classification is performed on the test set of the USPS database. 
}\label{table:classif}
\end{table}

\subsection{2D medical image template estimation}

A second database is used to illustrate our algorithm. As before, in order to make comparisons with existing algorithms, we  use the same database presented in \cite{aktSFDS2010}. It consists of $47$ medical images, each of them is a $2D$ square zone around the end point of the corpus callosum. This box contains a part of this corpus callosum as well as a part of the cerebellum. Ten exemplars are presented in the top rows of Fig.~\ref{fig:SFDS}.

The estimations are compared with these obtained with the FAM-EM and the hybrid Gibbs-SAEM  algorithms and with the grey level mean image (bottom row of Fig.~\ref{fig:SFDS}). In this real situation, the Euclidean grey level mean image (a) is very blurry. The estimated template using the FAM-EM (b) provides a first amelioration in particular leading to a sharper corpus callosum. However, the cerebellum still looks blurry in particular when comparing it to the shape which appears in the template estimated using the hybrid Gibbs SAEM (c). The result of our AMALA-SAEM is given in image (d). This template is very close to (c) as we could expect at a convergence point.
Nevertheless the AMALA-SAEM has much lower computational time than the hybrid Gibbs-SAEM. This shows the advantage of using AMALA-SAEM in real cases of high dimension.

\begin{figure}[h]
\begin{minipage}[h]{1.0\linewidth}
    \begin{tabular}[h]{c}
      \hspace{1cm}\\
       \includegraphics[width=0.8\textwidth]{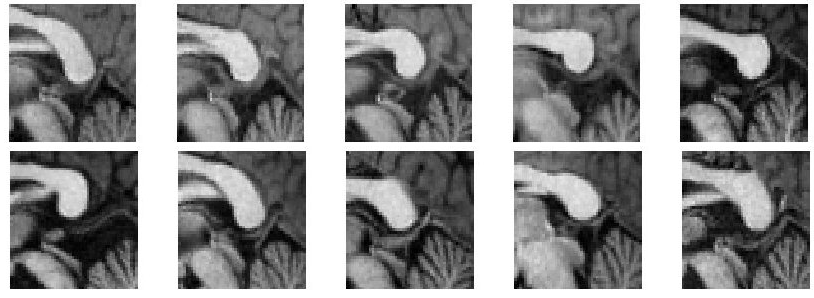}\\
      \begin{tabular}[h]{ccccc}
       \hspace{1cm}   \\
        \includegraphics[width=0.18\textwidth]{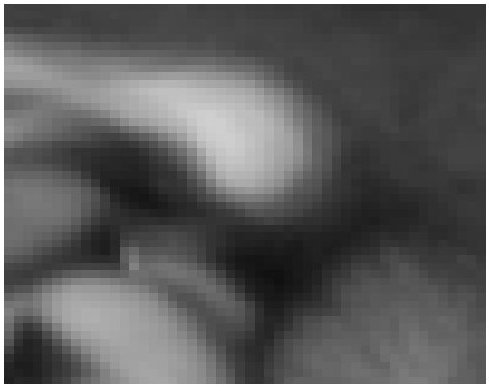}
        &
          \includegraphics[width=0.18\textwidth]{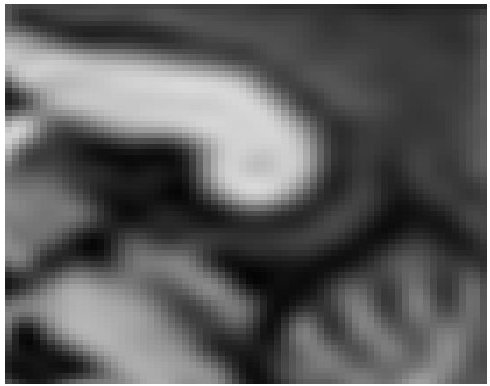}       
        &
          \includegraphics[width=0.18\textwidth]{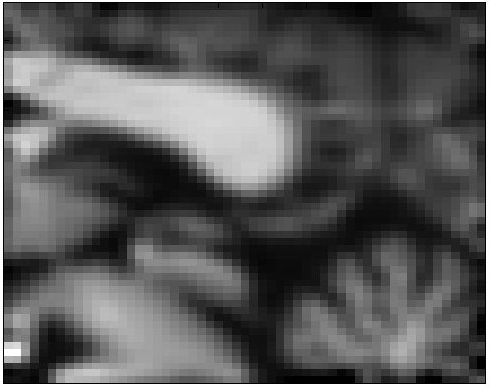}
		&
          \includegraphics[width=0.18\textwidth]{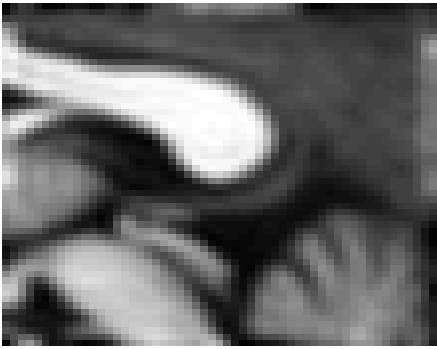} \\
		 (a) & (b) & (c) &  (d)
        \end{tabular}
        \end{tabular} 
\caption{\label{fig:SFDS} Medical image template estimation. Top rows~: $10$ Corpus callosum and cerebellum training images among the $47$ available. Bottom row~: (a) mean image. (b) FAM-EM estimated template. (c) Hybrid Gibbs - SAEM estimated template. (d) AMALA-SAEM estimated template.}   
  \end{minipage}
\end{figure}

\subsection{3D medical image template estimation} 
\label{sub:3d_medical_images}
We also test our algorithm in much higher dimension using the dataset of murine dendrite spines (see \cite{dataDendrite,cfthbmrISPA2007,cofthbmrISHIB2007}) already used in \cite{aktSFDS2010}.  The dataset consists of $50$ binary images of microscopic structures, tiny protuberances found on many types of neurons termed dendrite spines. The images are from control mice and knockout mice which have been genetically modified to mimic human neurological pathologies like Parkinson's disease.
The acquisition process consisted of electron microscopy after injection of Lucifer yellow and subsequent photo-oxidation.
The shapes were then manually segmented on the tomographic reconstruction of the neurons. Some of these binary images are presented in Fig.~\ref{fig:TrainSet} which shows a 3D view of some exemplars among the training set.
Each image is a binary (background $=0$, object $=2$) cubic volume of size $28^3$. We can notice here the large geometrical variability of this population of images. Therefore we use a hidden variable of dimension $3k_g=648$ to catch this complex structure. 

The template estimated with either $30$ or $50$ observations are presented in Fig.~\ref{fig:pics_TemplateDendrite30sujets}. We obtain similar shapes which are coherent with what a mean shape could be regarding the training sample. To evaluate the estimated geometrical variability, we generate synthetic samples as done in Subsection~\ref{subsec:covar}. Eight of these are shown in Fig.~\ref{fig:Sample}. We observe different twisting which are all coherent with the shapes observed in the dataset. Note that the training shapes have very irregular boundaries whereas the parametric model used for the template leads to a smoother image. Thus, the synthetic samples do not reflect the local ruggedness of the segmented murine dendrite spines. If the aim was to capture these local bumps, the number of photometrical control points has to be increased. However, the goal of our study was to detect global shape deformations.
 
\begin{figure*}[tb]
	\centering
	\includegraphics[width=0.7\textwidth]{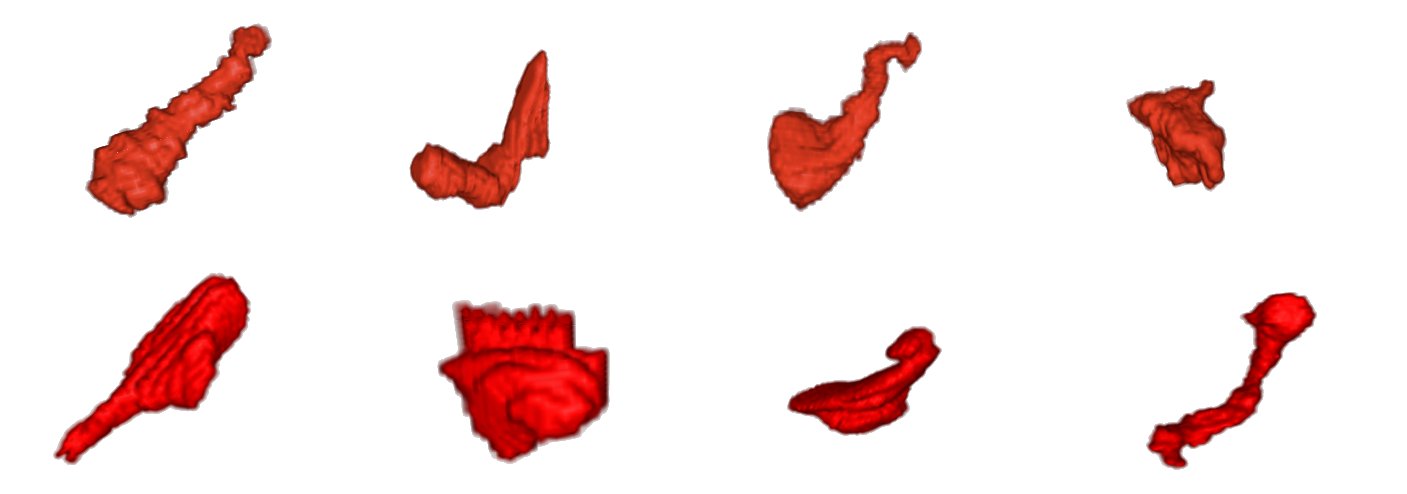}
	\caption{3D views of eight samples of the data set of dendrite spines. Each image is a volume leading to a binary image.}
	\label{fig:TrainSet}
\end{figure*}
\begin{figure*}[bt]
	\centering
	\includegraphics[width=0.5\textwidth]{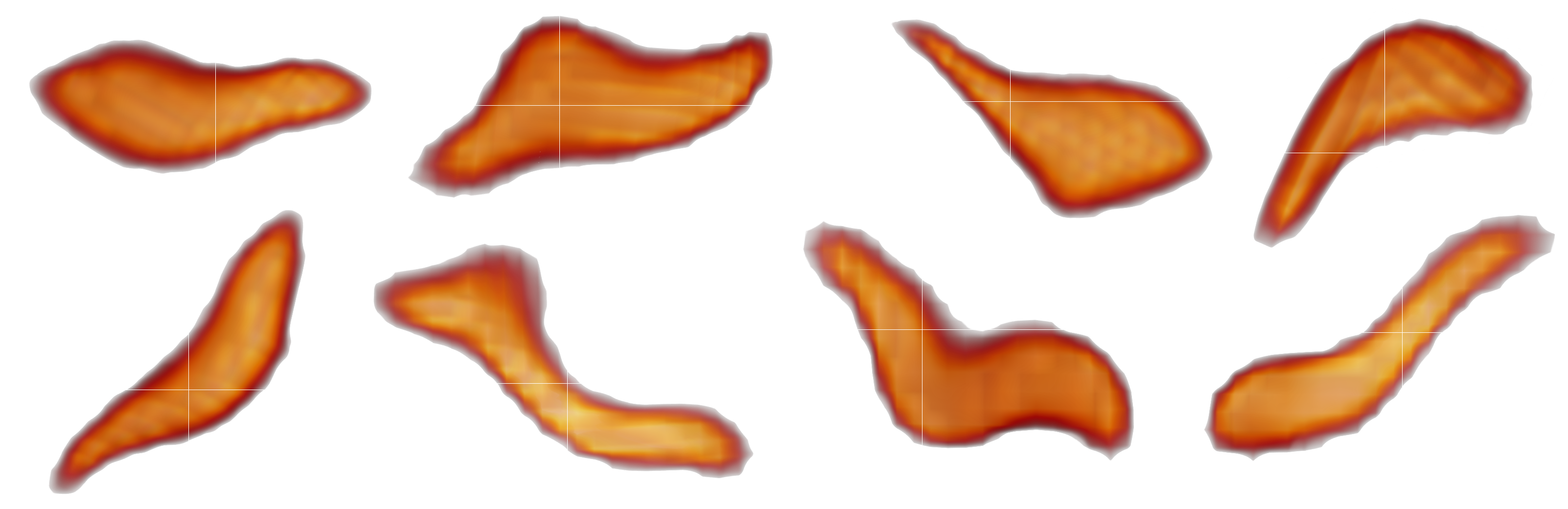}
	\caption{3D views of eight synthetic data. The estimated template shown on the left of Fig.~\ref{fig:pics_TemplateDendrite30sujets} is randomly deformed with respect to the estimated covariance matrix. 
	}
	\label{fig:Sample}
\end{figure*}

\begin{figure}[htbp]
	\centering
		\includegraphics[height=1.5cm]{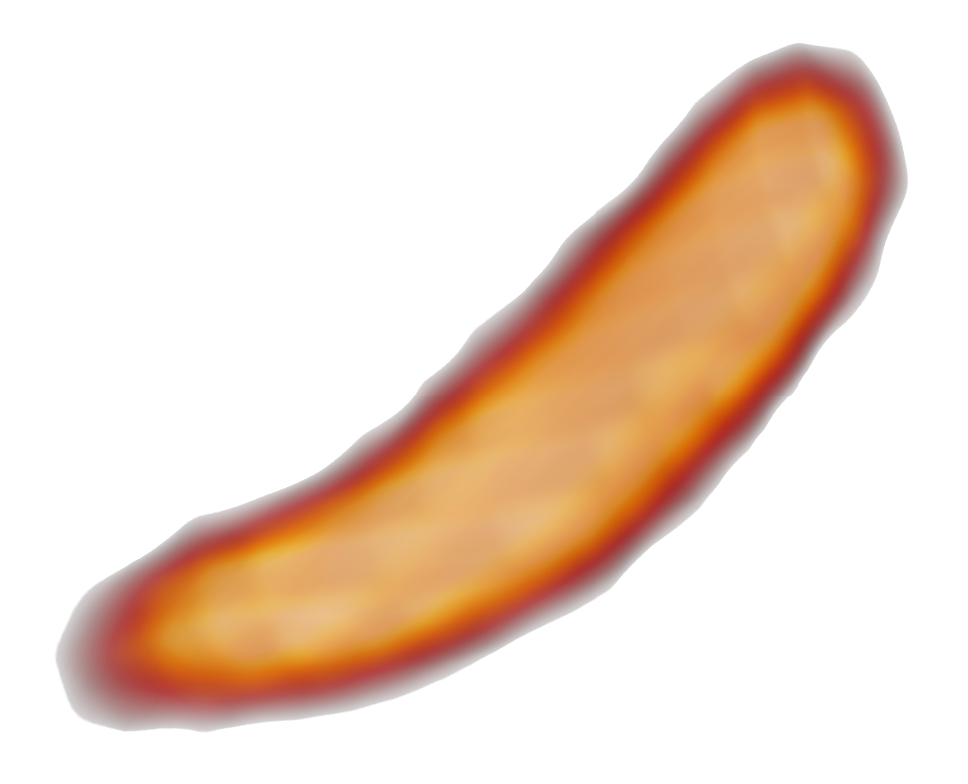} \hspace{1cm}
		\includegraphics[height=1.5cm]{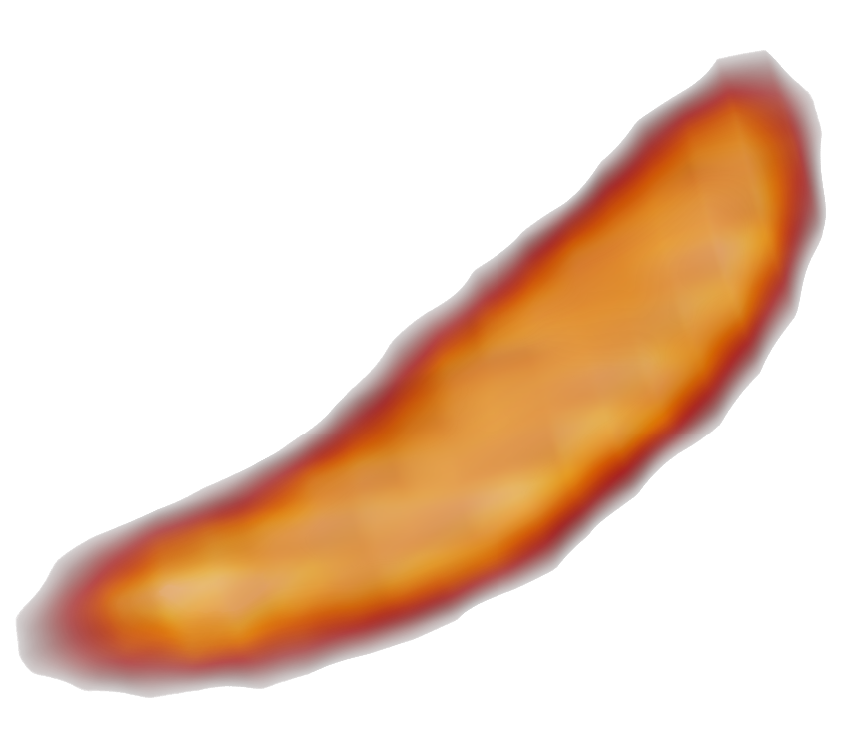}
	\caption{Estimated templates of murine dendrite spines. The training set is either composed of $30$ (left) or $50$ (right) images.}
	\label{fig:pics_TemplateDendrite30sujets}
\end{figure}



\section{Conclusion} 
\label{sec:conclusion}
In this paper we have considered the deformable template estimation issue using the BME model. We were particularly interested in the high dimensional setting. To that purpose, we have proposed to optimize the sampling scheme in the MCMC-SAEM algorithm to get an efficient and accurate estimation process. We have exhibited a new MCMC method based on the classical Metropolis Adjusted Langevin Algorithm where we introduced an anisotropic covariance matrix in the proposal. This  optimization takes into account the anisotropy of the target distribution. We proved that the generated Markov chain is geometrically ergodic uniformly on any compact set. We have also proved the almost sure convergence of the sequence of parameters generated by the estimation algorithm as well as its asymptotic normality.  We have illustrated this estimation algorithm in the BME model. We have considered different datasets of the literature namely the USPS database, 2D medical images of corpus callosum and 3D medical images of murine dendrite excrescences. We have compared the results with previously published ones to highlight the gain in speed and accuracy of the proposed algorithm.

We emphasize that the proposed estimation scheme can be applied in a wide range of application fields involving missing data models in high dimensional setting. In particular, this method is promising when considering mixture models as proposed in \cite{aksaemmulti}. Indeed, it will enable to shorten the computation time of the simulation part which in that case requires the use of many auxiliary Markov chains.
This also provides a good tool for this BME model when introducing a diffeomorphic constrain on the deformations. In this case,
it is even more important to get an efficient estimation process since the computational cost of diffeomorphic deformation is intrinsically large.



\section{Appendix} 
\label{sec:proofs}

\subsection{Proof of Proposition \ref{ergogeo}}
The idea of the proof is the same as the one of the geometric ergodicity of the random walk Metropolis algorithm developed in \cite{jh98} and reworked in \cite{atchade2006} for its adaptive version of the MALA with truncated drift. The fact that both the drift and the covariance matrix are bounded even depending on the gradient of $\log\lstat_s$ enables partially similar proofs.

Let us first recall the transition kernel:
\begin{multline}
	\ntrans_s(x,A) = \int_A \alpha_s(x,z) \cand(x,z)dz + \\
	\mathds{1}_{A}(x) \int_{\Xspace}(1-\alpha_s(x,z))\cand(x,z)dz\,,
\end{multline}
where $\alpha_s(x,z) =\min (1,\rho_s(x,z)) $ and $\rho_s(x,z)= \frac{\lstat_s(z)\cand(z,x)}{\cand(x,z)\lstat_s(x)}$.\\

Thanks to the bounded drift and covariance matrix, we can bound uniformly in $s\in\Sr$ the proposal distribution $\cand$ by two centered Gaussian distributions as follows: there exist  constants $0<k_1<k_2$,  $\epsilon_1 >0$ and $\epsilon_2 >0$ such that for all $(x,z)\in \Xspace^2$ and for all $s\in\Sr$
\begin{equation}
	\label{eq:encadrementCand}
	k_1 g_{\epsilon_1}(x-z) \leq \cand(x,z) \leq	k_2 g_{\epsilon_2}(x-z)\,,
\end{equation}
denoting by $g_a$ the centered Gaussian probability density function in $\R^{\dhid}$ with covariance matrix $aId_{\dhid}$.

\subsubsection{Proof of the existence of a small set $\smallset$}\label{proofsmallset}
Let $\smallset$ be a compact subset of $\Xspace$.

Let  $K$ be a compact set. 
We define $\tau = \inf \{ \rho_s(x,z), \ x\in\smallset, \ z\in K, \ s\in\Kapa\}$. Since $\rho_s$ is a ratio of positive continuous functions in $s,x$ and $z$ and $\Kapa$ is a compact subset of $\Sr$, we have $\tau>0$.  The same argument holds for $(s,x,z)\mapsto \cand(x,z)$ which is bounded by below by $\mu>0$.
Therefore, for all $x\in \smallset$, for any $A \in \mathcal{B}$ and for all $s\in\Kapa$~:

\begin{eqnarray*}
	\ntrans_s(x,A) &\geq& \int_{A\cap K} \alpha_s(x,z)\cand(x,z) dz \\
	&\geq& \min(1,\tau) \mu  \int_{A}\mathds{1}_{K}(z) dz \,.
\end{eqnarray*}

Therefore, we can define $\nu(A)= \frac{1}{Z}\int_A  \mathds{1}_{K}(z)dz$ where $Z$ is the renormalisation constant and $\eps= \min(1,\tau) \mu Z$ so that $\smallset$ is a small set for the transition kernel $\ntrans_s$ for all $s\in\Kapa$ and \eqref{eq:smallset} holds. 

\subsubsection{Proof of the drift condition}

We will prove this property in two steps. First, we establish that each kernel $\ntrans_s$ satisfies a Drift property with a specific function $V_s$. Then, we construct a  common function $V$ so that we will be able to prove the Drift property uniformly in $s\in\Kapa$.\\

Let us concentrate on the first step. Let us consider $s$ fixed. As already suggested in \cite{jh98}, we only need to prove the two following conditions: 
\begin{equation}
	\label{eq:supfini}
	\sup\limits_{x\in\Xspace} \frac{\ntrans_s  V_s(x)}{V_s(x)} <\infty
\end{equation}
and 
\begin{equation}
	\label{eq:limsupinf1}
	\limsup\limits_{|x|\to \infty} \frac{\ntrans_s V_s(x)}{V_s(x)} <1\,.
\end{equation}
We take the same path as in \cite{atchade2006} applied to our case and refer to Fig.~\ref{fig:pics_dessin} for a 2D visualization of all the sets introduced along the proof.

\begin{figure}[htbp]
\includegraphics[height=2.5in]{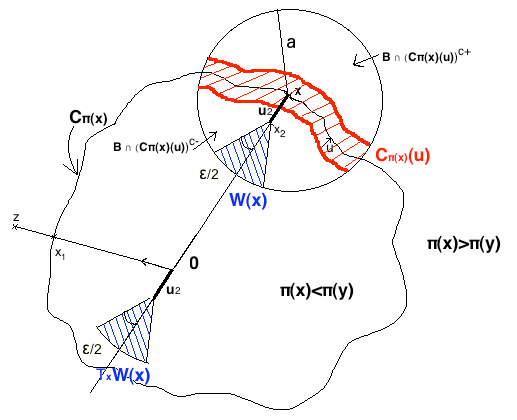}
	\caption{2D representation of the sets used in the proof.}
	\label{fig:pics_dessin}
\end{figure}

For any $x\in \Xspace$, we denote by

\noindent $A_s(x)= \left\{  z \in \Xspace \text{ such that } \rho_s(x,z) \geq 1  \right\}$ the acceptance set and  by $ R_s(x)= A_s(x)^c$ its complementary set. Then, we recall $V_s(x)= c_s\lstat_s(x)^{-\beta}$ for some $\beta\in]0,1[$. Therefore, for all $x\in \Xspace$:
\begin{multline*}
	\frac{\ntrans_s  V_s(x)}{V_s(x)} = 
	\int_{A_s(x)} \cand(x,z)\frac{V_s(z)}{V_s(x)} dz +\\
	\int_{R_s(x)} \frac{\lstat_s(z)\cand(z,x)}{\lstat_s(x)\cand(x,z)}\cand(x,z)\frac{V_s(z)}{V_s(x)} dz + \\
	\int_{R_s(x)}  \left( 1-\frac{\lstat_s(z)\cand(z,x)}{\lstat_s(x)\cand(x,z)}\right)\cand(x,z)dz\\
	\leq\int_{A_s(x)} \underbrace{\frac{\lstat_s(z)^{-\beta}}{\lstat_s(x)^{-\beta}}\cand(x,z)}_{f_1(x,z)} dz +\\
	\int_{R_s(x)} \underbrace{\frac{\lstat_s(z)^{1-\beta}}{\lstat_s(x)^{1-\beta}}\cand(z,x)}_{f_2(x,z)} dz +
	\int_{R_s(x)} \underbrace{\cand(x,z) }_{f_3(x,z)}dz\,.
\end{multline*}

On the acceptance set $A_s(x)$, we have:
\begin{equation*}
	\frac{\lstat_s(z)^{-\beta}}{\lstat_s(x)^{-\beta}}\cand(x,z) \leq \cand(z,x)^\beta \cand(x,z)^{1-\beta}\,.
\end{equation*}
Thanks to Equation \eqref{eq:encadrementCand} one can bound this right hand side by the following symmetric Gaussian distribution:
\begin{equation}
	\frac{\lstat_s(z)^{-\beta}}{\lstat_s(x)^{-\beta}}\cand(x,z) \leq k_2 g_{\epsilon_2}(z-x)
\end{equation}
which yields:
\begin{equation}
	\label{eq:dominationsurA}
	\int_{A_s(x)} f_1(x,y) dz \leq k_2 \int_{A_s(x)} 	g_{\epsilon_2}(z-x) dz\,.
\end{equation}
Equivalently on $R_s(x)$, we have the following bound:
\begin{eqnarray}
	\label{eq:dominationsurR}
	\frac{\lstat_s(z)^{1-\beta}}{{\lstat_s(x)^{1-\beta}}}\cand(z,x) &\leq &\cand(x,z)^{1-\beta}\cand(z,x)^{\beta} \\
	&\leq & k_2 g_{\epsilon_2}(z-x)\,.
\end{eqnarray}
Let fix $\eps>0$, there exists $ a>0$ such that $\int_{B(x,a)} g_{\epsilon_2} (z-x)dz \geq 1-\eps$. This leads to:
\begin{equation*}
	\int_{A_s(x)\cap B(x,a)^c} f_1(x,z) dz \leq  k_2 \eps \,.
\end{equation*}

Let $C_{\lstat_s(x)}$ be the level set of $\lstat_s$ in $x$: $C_{\lstat_s(x)}= \{z\in\Xspace \ : \ \ \lstat_s(z)=\lstat_s(x)\}$. We define a pipe around this level set as 

\noindent$C_{\lstat_s(x)}(u)=\{ z+t \ n(z), \ |t|\leq u, \ z\in C_{\lstat_s(x)} \}$.

Thanks to assumption (\textbf{B1}), there exists $r_1>0$ such that for all $ x\in\Xspace$ satisfying $|x| \geq r_1$ then $0$ is inside the hyperspace defined by the level set $C_{\lstat_s(x)}$ ($\lstat_s(0) > \lstat_s(x)$). Therefore, let $x\in\Xspace, \ |x|\geq r_1$, then for all $z\in \Xspace$, $\exists x_1\in C_{\lstat_s(x)}$ and $t>0$ such that $z=x_1+t\ n(x_1)$. 

Since $z\mapsto g_{\epsilon_2}(z-x)$ is a smooth density in the variable $z$, we can find $u>0$ sufficiently small such that 
\begin{equation}
	\label{eq:intsurCpiinterB}
	\int_{B(x,a)\cap C_{\lstat_s(x)}(u)} g_{\epsilon_2}(z-x)dz \leq \eps\,,
\end{equation}
leading to
\begin{eqnarray*}
	\int_{A_s(x)\cap B(x,a) \cap C_{\lstat_s(x)}(u)} f_1(x,z) dz &\leq & k_2 \eps \,.
\end{eqnarray*}
Assumption (\textbf{B1}) implies that for any $r>0$ and $t>0$, $d_r(t)=\sup\limits_{|x|\geq r}\frac{\lstat_s(x+t\ n(x))}{\lstat_s(x)}$ goes to $0$ as $r$ goes to $\infty$. 
Denote $C_{\lstat_s(x)}(u)^{c+} = \{ z \in C_{\lstat_s(x)}(u)^c \  s.t. \  \lstat_s(x)>\lstat_s(z) \}$ and $C_{\lstat_s(x)}(u)^{c-} = \{ z \in C_{\lstat_s(x)}(u)^c \  s.t.\ \lstat_s(x)<\lstat_s(z) \}$. We denote $ \mathcal{D}^+ =A_s(x)\cap B(x,a) \cap C_{\lstat_s(x)}(u)^{c+} $.
 Therefore there exists $r_2>r_1+a$ such that for any $x, \ |x|\geq r_2$  
\begin{eqnarray*}
	\int_{\mathcal{D^+}} f_1(x,z) dz &\leq   &
	\int_{\mathcal{D^+}} \left(\frac{\lstat_s(z)}{\lstat_s(x)}\right)^{1-\beta} \cand(z,x)dz \\
	&\leq&  d_{r_2}(u)^{1-\beta} k_2 \int_\Xspace g_{\epsilon_2}(z-x) dz \\
	 &\leq & k_2 d_{r_2}(u)^{1-\beta}\,,
\end{eqnarray*}
using Equation \eqref{eq:superexp2} which states that the stationary distribution is decreasing in the direction of the normal of $x$ sufficiently large.

In the same way, one has on the set $ \mathcal{D}^- =A_s(x)\cap B(x,a) \cap C_{\lstat_s(x)}(u)^{c-} $
\begin{eqnarray*}
	\int_{\mathcal{D}^-} f_1(x,z) dz &\leq& \int_{\mathcal{D}^-}  \left(\frac{\lstat_s(z)}{\lstat_s(x)}\right)^{-\beta} \cand(x,z)dz\\
	&\leq & k_2 d_{r_2}(u)^{\beta}\,.
\end{eqnarray*}
The same inequalities can be obtained for $f_2$ using the same arguments:
\begin{eqnarray*}
	\int_{R_s(x)\cap B(x,a)^c} f_2(x,z) dz &\leq & k_2 \eps \\
	\int_{R_s(x)\cap B(x,a) \cap C_{\lstat_s(x)}(u)} f_2(x,z) dz &\leq & k_2 \eps \\
	\int_{R_s(x)\cap B(x,a) \cap C_{\lstat_s(x)}(u)^{c+}} f_2(x,z) dz &\leq & k_2 d_{r_2}(u)^{1-\beta}\\
	\int_{R_s(x)\cap B(x,a) \cap C_{\lstat_s(x)}(u)^{c-}} f_2(x,z) dz &\leq & k_2 d_{r_2}(u)^{\beta}\,.
\end{eqnarray*}

This yields
\begin{equation}
	\limsup\limits_{|x|\to \infty} \frac{\ntrans_s V_s (x)}{V_s(x)} \leq \limsup\limits_{|x|\to \infty} \int_{R_s(x)} \cand(x,z)dz.
\end{equation}
Let $Q(x,A_s(x)) = \int_{A_s(x)} \cand(x,z)dz$, we get $$\limsup\limits_{|x|\to \infty} \frac{\ntrans_s V_s (x)}{V_s(x)} \leq 1-\liminf\limits_{|x|\to \infty} Q(x,A_s(x)).$$  
Let us now prove that 

\noindent $\liminf\limits_{|x|\to \infty} Q(x,A_s(x)) \geq~c>0$ where $c$ does not depend on $x$.

Let $a$ fixed as above. Since $\cand$ is an exponential function, there exists $c_0^a>0$ such that for all $x\in\Xspace$ and $s\in\Sr$,
\begin{equation}
	\label{eq:candpositif}
	\inf\limits_{z\in B(x,a)}\frac{\cand(z,x)}{\cand(x,z)}\geq c_0^a\,.
\end{equation}
 Moreover, thanks to assumption (\textbf{B1}) there exists $r_3>0$ such that for all $x\in \Xspace$, $|x|\geq r_3$, there exists $0<u_2<a$ such that, 
\begin{equation}
	\label{eq:limitpi}
	\frac{\lstat_s(x)}{\lstat_s(x-u_2\ n(x))}\leq c_0^a\,.
\end{equation} 
Hence, for $|x|\geq r_3$, any point $x_2=x-u_2\ n(x)$ belongs to $A_s(x)$.

Let $W(x)$ be the cone defined as: 
\begin{multline}
	W(x) = \left\{  x_2-t\zeta, \ 0<t<a-u_2, \ \zeta\in \Sr^{d-1}, \right.\\ \left. |\zeta - n(x_2)|\leq \frac{\eps}{2}  \right\}
	\end{multline}
where $\Sr^{d-1}$ is the unit sphere in $\R^d$.
 \\

Let us prove that $W(x)\subset A_s(x)$.

Using assumption (\textbf{B1}), we have for a sufficiently large $x$: $m(x).n(x)\leq -\eps$. Besides, by construction of $W(x)$ for large $x$, for all $z\in W(x)$, $|n(z)-n(x)|\leq \eps/2$ with $n(x)=n(x_2)$ (see Fig.~\ref{fig:pics_dessin}). This leads to for any sufficiently large $x $, for all $ z\in W(x)$, 
\begin{multline}
	\label{eq:mzeta}
	m(z).\zeta =  m(z). (\zeta-n(x_2)) +  m(z).(n(x_2)-n(z)) \\
	+ m(z). n(z) \leq \eps/2 + \eps/2 - \eps =0\,.
\end{multline}
Let now $z=x_2-t\zeta \in W(x)$. Using the mean value theorem on the differentiable function $\lstat_s$ between $x_2$ and $z$, we get that there exists $\tau \in ]0,s[$ such that $\lstat_s(z)-\lstat(x_2)= -t\zeta . \nabla \lstat_s(x_2-\tau\zeta)$. Using the definition of $m$, this implies that
$\lstat_s(z)-\lstat_s(x_2)= -t\zeta .m(x_2-\tau\zeta)  |\nabla \lstat_s(x_2-\tau\zeta)| \geq 0$ thanks to Equation \eqref{eq:mzeta}. Putting all these results together we finally get that for all $z\in W(x)$, $\lstat_s(z)\geq\lstat_s(x_2)\geq \frac{1}{c_0^a}\lstat_s(x)$. Moreover, as $z\in B(x,a)$ as well, Equation \eqref{eq:candpositif} is satisfied, leading to $z\in A_s(x)$. \\

Then, we have
\begin{eqnarray*}
Q(x,A_s(x))& = &\int_{A_s(x)} \cand(x,z)dz \\
& \geq &\int_{A_s(x)} k_1 g_{\epsilon_1}(z-x) dz \\
& \geq &k_1 \int_{W(x)}g_{\epsilon_1}(z-x) dz  \\
& = &\int_{T_x(W(x))}g_{\epsilon_1}(z) dz 
\end{eqnarray*}
where 
\begin{multline}
T_x(W(x))= \Bigl\{  -u_2 \ n(x)-t\zeta, \ 0<t<a-u_2, \\ \Bigr.  \left. \zeta\in \Sr^{d-1}, \ |\zeta - n(x)|\leq \frac{\eps}{2}  \right\}
\end{multline}
 is the translation of the set $W(x)$ by the vector $x$. Note that $W(x)$ does not depend on $s$. But since $g_{\epsilon_1}$ is isotropic and $T_x(W(x))$ only depends on a fixed constant $u_2$ and $n(x)$, this last integral is independent of $x$, so there exists a positive constant $c$ independent of $s\in\Sr$ such that:
\begin{equation}
\label{eq:defdec}
c=\int_{T_x(W(x))}g_{\epsilon_1}(z) dz \,.
\end{equation} 

Back to our limit, for all $s\in\Sr$
\begin{equation}
	\limsup\limits_{|x|\to \infty} \frac{\ntrans_s V_s (x)}{V_s(x)} \leq 1-c	
\end{equation} 
which ends the proof of the condition \eqref{eq:limsupinf1}. 

To prove \eqref{eq:supfini}, we use the previous result. Indeed, since $\frac{\ntrans_s V_s (x)}{V_s(x)} $ is a smooth function on $\Xspace$ it is bounded on every compact subset. Moreover since the $\limsup$ is finite, then it is also bounded outside a fixed compact. This proves the results.\\

Thanks to assumption \textbf{(B2)} and the bounded drift for all $s\in\Sr$, there exists a constant $c_0^a$ uniform in $\ass\in\Sr$  such that Equations \eqref{eq:candpositif}  and \eqref{eq:limitpi} still hold for all $\ass \in \Sr$. This implies, as mentioned above, that the set $T_x(W(x))$ is independent of $\ass\in\Sr$. Therefore, we can set $\tilde{\lambda} = 1-c <1$ where $c$ is defined in Equation \eqref{eq:defdec} and is also independent of $\ass\in\Sr$. \\

This proves the Drift property for the function $V_s$: there exist constants $0<\tilde\lambda<1 $ and $\tilde{b}>0$ such that for all $x\in\Xspace$,
\begin{equation}
	\ntrans_s V_s(x)\leq \tilde\lambda V_s (x) + \tilde{b}\mathds{1}_{\smallset} (x)\,,
\end{equation}
where $\smallset$ is a small set. Note that $\tilde{b}$ is also independent of $s\in\Sr$ using the same arguments as before.

Let us now exhibit a function $V$ independent of $s\in \Sr$ and prove the uniform Drift condition.\\

We define for all $x\in\Xspace$, 
\begin{equation}
	\label{eq:defV}
	V(x)= V_1(x)^\xi V_2(x)^{2\xi}
\end{equation}
for  $0<\xi<\min(1/2\beta, b_0/4)$. Therefore, for all $s\in\Sr$, for all $\varepsilon >0$ we have,
\begin{multline}
	\ntrans_s V(x) = \int_\Xspace \ntrans_s (x,z) V_1(z)^\xi V_2(z)^{2\xi} dz\\
	\leq \frac{1}{2}\int_\Xspace \ntrans_s (x,z) \left(\frac{V_1(z)^{2\xi}}{\varepsilon ^2} + \varepsilon^2 V_2(z)^{4\xi}\right) dz\\
	\leq \frac{1}{2\varepsilon^2} \int_\Xspace \ntrans_s (x,z) V_s(z)^{2\xi} dz + \\ \frac{\varepsilon^2}{2} \int_\Xspace \ntrans_s (x,z) V_2(z)^{4\xi} dz \,.
\end{multline}
Applying the Drift property for $\ntrans_s$ with $V_s^{2\xi}$, 
\begin{multline}
	\ntrans_s V(x) \leq \frac{1}{2\varepsilon^2} (\tilde\lambda V_s(x)^{2\xi} +  \tilde{b}\mathds{1}_\smallset(x)) \\+  \frac{\varepsilon^2}{2} \int_\Xspace \ntrans_s (x,z) V_2(z)^{4\xi} dz\,.
\end{multline}
Using the definition of $V$ and the fact that $V_1$ is bounded by below by $1$, we get:
\begin{multline}
	\ntrans_s V(x) 
\leq \frac{\tilde\lambda}{2\varepsilon^2} V(x) +  \frac{\tilde{b}}{2\varepsilon^2}\mathds{1}_\smallset(x) \\ 
+ \frac{\varepsilon^2}{2} \int_\Xspace \ntrans_s (x,z) V_2(z)^{4\xi} dz\,.
\end{multline}

Since $0<\tilde\lambda<1$ is independent of $s\in\Sr$ and using assumption \textbf{(B3)}, there exists $\xi >0$ such that 
\begin{equation}
	 \sup\limits_{s\in \Sr, x\in\Xspace}\int_\Xspace \ntrans_s (x,z) V_2(z)^{4\xi} dz \leq \frac{2}{1+\tilde\lambda}\,.
\end{equation}
This yields
\begin{multline}
		\ntrans_s V(x) 
	\leq \left(\frac{\tilde\lambda}{2\varepsilon^2} + \frac{\varepsilon^2}{1+\tilde\lambda}\right) V(x) + \frac{\tilde{b}}{2\varepsilon^2}\mathds{1}_\smallset(x)\,.
\end{multline}
We can now fix $\varepsilon ^2 = \sqrt{\frac{\tilde\lambda(1+\tilde\lambda)}{2}}$ which leads to
\begin{equation}
	\ntrans_s V(x) 
\leq \sqrt{\frac{2\tilde\lambda}{1+\tilde\lambda}} V(x) + \frac{\tilde{b}}{2\varepsilon^2}\mathds{1}_\smallset(x)\,.
\end{equation}

We set $\lambda = \sqrt{\frac{2\tilde\lambda}{1+\tilde\lambda}} <1$ and $b=\frac{\tilde{b}}{2\varepsilon^2}>0$ which concludes the proof.


\subsection{Proof of Theorem \ref{Th:convergenceAlgo2} }
\label{subsec:Convergence}

We provide here the proof of the convergence of the estimated sequence generated by Algorithm \ref{algo:AMALASAEM}.

We apply Theorem 4.1 from \cite{aktdefmod} with the functions $H_s$ equals to $H_s(\hid)=\tS(z)-s$, $\ntrans_s = \ntrans_{\hte(s)}$, $\pi_s=p_{\hte(s)}$  and 
	\begin{eqnarray*}
	   h(s) = \int (S(\hid)-s) p_{\hte(s)}(\hid)\mu(d\hid) \, .
	\end{eqnarray*}

Let us first prove assumption {\bf (A1')} which ensures the existence of a global Lyapunov function for the mean field of the stochastic approximation. It guaranties that, under some conditions, the sequence  $(s_k)_{k\geq 0}$ remains in a compact subset of $\Sr$ and converges to the set of critical points of the log-likelihood.

Assumptions {\bf (M1)-(M7)} ensure that $\Sr$ is an open subset and that the function $h$ is 
continuous on $\Sr$. Moreover defining $w(s)=-l(\hte(s))$, we get that $w$ is continuously differentiable on $\Sr$.
Applying Lemma 2 of \cite{DLM}, we get {\bf (A1')(i)}, {\bf (A1')(iii)} and {\bf (A1')(iv)}.

To prove {\bf (A1')(ii)}, we consider as absorbing set $\Sr_a$ the closure of the convex hull of $S(\Rset{\dhid})$ denoted $\overline{Conv(S(\Rset{\dhid}))}$. So assumption {\bf (M7)(ii)} is exactly equivalent to assumption {\bf (A1')(ii)}.

This achieves the proof of assumption {\bf (A1')}.

\vspace{0.5cm}

Let us now prove assumption  \textbf{(A2)} which states in particular the existence of a unique invariant distribution for the Markov chain. 

To that purpose, we prove that our family of kernels satisfies the drift conditions mentioned in \cite{andrieumoulinespriouret} and used in \cite{aktdefmod} in a similar context. These conditions are the existence of a small set uniformly in $s\in\Kapa$, the uniform drift condition and an upper bound on the family kernel~:
\begin{itemize}
\item[(DRI1)]
For any $s\in \Sr$, $\ntrans_{\hte(s)}$ is $\psi$-irreducible and aperiodic. In addition there exist  a function $V:\R^{\dhid} \to [1,\infty[$  and a constant $p\geq 2$ such that for any compact subset $\Kapa\subset \mathcal{S}$,  there exist an integer $j$ and constants $0<\lambda<1$, $B$, $\kappa$,  $\delta>0$ and a probability measure $\nu$ such that
\begin{eqnarray}
\label{6.1}\sup\limits_{s\in\Kapa} \ntrans_{\hte(s)} ^jV^p(\hid)& \leq &\lambda V^p(\hid) +
 B \mathds{1}_{\texttt{C}}(\hid) \,, \\ \label{6.2}
 \sup\limits_{s\in\Kapa} \ntrans_{\hte(s)} V^p(\hid)& \leq& \kappa V^p(\hid)
 \ \  \forall \hid\in \Xspace  \,, \\ \label{6.3}
 \inf\limits_{s\in\Kapa} \ntrans_{\hte(s)} ^j (\hid,A) &\geq& \delta \nu(A) \ \
 \forall \hid\in \texttt{C}, \forall A \in \mathcal{B} \, .
 \end{eqnarray}
\end{itemize}

Let us start with the irreducibility of $\ntrans_{\hte(s)}$.
The kernel $\ntrans_{\hte(s)}$ is bounded by below as follows~:
\begin{equation}
	\ntrans_{\hte(s)}(x,A) \geq \int_A \alpha_s(x,z) \cand(x,z)dz \,,
\end{equation}
where $\alpha_s(x,z) =\min (1,\rho_s(x,z))  $ and $\rho_s(x,z)= \frac{\lstat_s(z)\cand(z,x)}{\cand(x,z)\lstat_s(x)} >0$. Since the proposal density $\cand$ is positive, this proves that  $\ntrans_s(x,A)$ is positive and the $\psi$-irreducibility of each kernel of the family.\\

Proposition~\ref{ergogeo} and Remark~\ref{rem:ergoPuissancep} show that Equations~\eqref{6.1} and \eqref{6.3} hold for $j=1$ with $V$ defined in Equation~\eqref{eq:defV} and some $p>2$. Moreover, since Equation~\eqref{6.1} holds for $j=1$ and $V\geq 1$, Equation~\eqref{6.2} directly comes from Equation~\eqref{6.1} choosing $\kappa= B+\lambda$. This implies all three inequalities.  Since the small set condition is satisfied with $j=1$ (small set "in one-step"), each chain of the family is aperiodic (see \cite{meyntweedie}). 

Assumption {\bf (A2)} is therefore directly implied by assumption {\bf (M1)}. 

\vspace{0.5cm}

Let us now prove assumption {\bf (A3')} which states some regularity conditions (H\"older type ones) on the solution of the Poisson equation related to the transition kernel. It also ensures that this solution and its image through the transition kernel have reasonable behaviors as the chain goes to infinity and that the kernel is $V^p$-bounded in expectation.\\

The drift conditions proved previously imply the geometric ergodicity uniformly in $s$ in any compact set $\Kapa$. This also ensures the existence of a solution of the Poisson equation (see \cite{meyntweedie}) required in Assumption (\textbf{A3'}).\\

We first consider condition (\textbf{A3'(i)}). 

Let us define for any $g :\Xspace \to \R^{m} $ the norm $\|g\|_V \triangleq \sup\limits_{\hid\in\Xspace} \frac{\|g(\hid)\|}{V(\hid)}$.\\

\noindent Since $H_s(\hid)=\tS(\hid) -s $, assumptions {\bf (M8)} and (\textbf{B1}) ensure that 
$ \sup\limits_{s\in\Kapa} \|H_s\|_V < \infty $ 
 and  inequality (4.3) of (\textbf{A3'(i)}) holds. \\
The uniform
ergodicity of the family of Markov chains corresponding to the AMALA on $\Kapa$ ensures that there
 exist constants $0<\gamma_\Kapa<1$ and  $C_\Kapa>0$  such that for all $s\in \Kapa$
\begin{eqnarray*}
 \sup\limits_{s\in\Kapa} \|g_{\hat\te(s)}\|_V &=& \sup\limits_{s\in\Kapa} \| \sum\limits_{k\geq 0} (\ntrans^k_{\hat{\te}(s)}
  H_s - p_{\hte(s)}H_s)\|_V \\
&\leq & \sup\limits_{s\in\Kapa} \sum\limits_{k\geq 0} C_\Kapa \gamma_\Kapa^k \|H_s\|_V <\infty \ .
\end{eqnarray*}
Thus for all $ s$ in $ \Kapa$ , $\ g_{\hat\te(s)}$ belongs to $\mathcal{L}_V = \{ g~ :
\R^{\dhid} \to \R^m
, \| g\| _V < \infty \} $.

Repeating the same calculation as above, it is immediate that
$ \sup\limits_{s\in\Kapa}|\|\ntrans_{\hat{\te}(s)}
g_{\hat\te(s)} \|_V$ is bounded. This ends the proof of inequality (4.4) of
(\textbf{A3'(i)}).\\

We now move to the H\"older  conditions (4.5) of (\textbf{A3'(i)}). We will use
the two following lemmas which state H\"older conditions on the
transition kernel and its iterates:
\begin{lemma}\label{lem:Holder}
Let $\mathcal{K}$ be a compact subset of $\Sr$. There exists a
constant $C_{\mathcal{K}}$ such that
for all $1\leq p $ there exists $ q>p$, for all function $f \in \mathcal{L}_{V^p}$ and for all $(s,s')
\in \mathcal{K}^2$ we have~:
\begin{eqnarray*}
  \| \ntrans_{\hat\te(s)} f -
  \ntrans_{\hat\te(s')} f \|_{V^{q}}  \leq
C_\mathcal{K}  
\|f\|_{V^{p}} \  \|s-s'\| \,.
\end{eqnarray*}
\end{lemma}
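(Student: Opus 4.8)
The plan is to bound the difference of the two transition kernels applied to $f$ by splitting $\ntrans_{\hat\te(s)}f - \ntrans_{\hat\te(s')}f$ into the contributions coming from (a) the difference of the proposal densities $q_c^{s}(x,\cdot)$ and $q_c^{s'}(x,\cdot)$, (b) the difference of the acceptance ratios $\alpha_s(x,\cdot)$ and $\alpha_{s'}(x,\cdot)$, and (c) the difference of the rejection terms $\mathds{1}_A(x)\int(1-\alpha_s(x,z))q_c^{s}(x,z)\,dz$. Recall from Section~\ref{sec:AMALA} that the proposal $q_c^{s}(x,\cdot)$ is the Gaussian density $\mathcal{N}(x+\delta D_s(x),\delta\Sigma_s(x))$, where both $D_s(x)$ and $\Sigma_s(x)$ are explicit continuous functions of $\nabla_x\log\lstat_s(x)$ through the truncation $D_s(x)=\frac{b\nabla\log\lstat_s(x)}{\max(b,|\nabla\log\lstat_s(x)|)}$. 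The truncation threshold $b$ is the key structural ingredient here: it forces $|D_s(x)|\leq b$ uniformly, and it forces $\Sigma_s(x)=\varepsilon Id_{\dhid}+D_s(x)D_s(x)^T$ to have eigenvalues bounded in $[\varepsilon,\varepsilon+b^2]$, so $\Sigma_s(x)$ is uniformly invertible with uniformly bounded inverse and determinant.

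The first step is to establish pointwise Lipschitz-type control of the proposal density in $s$. Since $x\mapsto x + \delta D_s(x)$ and $x\mapsto\delta\Sigma_s(x)$ are built from $\nabla_x\log\lstat_s(x)$, and assumption \textbf{(B2)} gives continuity of $s\mapsto\nabla_x\log\lstat_s$ (jointly, on the compact $\mathcal{K}$, uniform continuity), the map $s\mapsto D_s(x)$ is continuous uniformly on $\mathcal{K}$; combined with the uniform bounds from the $b$-truncation this yields, for a multivariate Gaussian density seen as a smooth function of its mean and covariance, an inequality of the form
\begin{equation*}
|q_c^{s}(x,z) - q_c^{s'}(x,z)| \leq C_{\mathcal{K}}\,\omega(\|s-s'\|)\,\big(g_{\epsilon_1}(x-z) + g_{\epsilon_1}(z-x)\big)
\end{equation*}
where $\omega$ is the (uniform) modulus of continuity of $s\mapsto\nabla_x\log\lstat_s$ and $g_{\epsilon_1}$ is the dominating Gaussian from Equation~\eqref{eq:encadrementCand}. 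To upgrade this from a modulus of continuity to the genuine Lipschitz bound $\|s-s'\|$ claimed in the statement, I would invoke assumption \textbf{(M5)} (continuous differentiability of $\hte$) together with whatever differentiability is available on $s\mapsto\nabla_x\log\lstat_s$ on the compact $\mathcal{K}$ — this is where one uses that we are really looking at $\ntrans_{\hte(s)}$ with $\hte$ smooth, so the relevant map $s\mapsto\nabla_x\log p_{\hte(s)}(x)$ inherits local Lipschitz continuity, uniform on $\mathcal{K}$. Similarly $\lstat_s(z)/\lstat_s(x)$ entering the acceptance ratio $\rho_s(x,z)$ is Lipschitz in $s$ with a constant controlled by $V^{p}(z)$ for an appropriate power, using \textbf{(B2)} and the definition $V_s = c_s\lstat_s^{-\beta}$.

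The second step is to integrate these pointwise bounds against $f$ and absorb the polynomial/exponential growth. Writing $\|f(z)\|\leq\|f\|_{V^p}V^p(z)$ and using the domination by $g_{\epsilon_1}$, the integral $\int V^p(z)g_{\epsilon_1}(x-z)\,dz$ is finite and bounded by a constant times $V^{q}(x)$ for some $q>p$ — exactly the loss of power appearing in the statement — by the same Gaussian-against-$V^p$ computation already used in the Drift proof (Equations~\eqref{eq:dominationsurA}–\eqref{eq:dominationsurR} and assumption \textbf{(B3)}). The rejection term (c) is handled by the same estimates since $1-\alpha_s(x,z)\in[0,1]$ and the $s$-dependence there is again only through $q_c^s$ and $\rho_s$. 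Collecting the three contributions gives $\|\ntrans_{\hte(s)}f - \ntrans_{\hte(s')}f\|_{V^{q}}\leq C_{\mathcal{K}}\|f\|_{V^{p}}\|s-s'\|$.

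The main obstacle I expect is the precise control of the $s$-dependence of the acceptance region and of $\rho_s$ near its non-smooth points: $\alpha_s(x,z)=\min(1,\rho_s(x,z))$ is only Lipschitz (not $C^1$) in $\rho_s$, and $\rho_s$ itself involves the ratio $q_c^{s}(z,x)/q_c^{s}(x,z)$ of Gaussians with state-dependent covariances, so one must check carefully that the log of this ratio stays controlled — this is again where the $b$-truncation (bounded drift, uniformly non-degenerate covariance with bounded eigenvalues) is doing the essential work, preventing the ratio from blowing up and keeping all the constants uniform over $\mathcal{K}$ and over $x,z\in\Xspace$. The bookkeeping of which power of $V$ is lost at each domination step, so that a single final $q>p$ works for all the terms simultaneously, is the other delicate point, but it is routine given assumption \textbf{(B3)} allows choosing the exponents small enough.
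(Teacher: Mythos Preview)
Your outline has the right ingredients --- the Gaussian domination \eqref{eq:encadrementCand} coming from the $b$-truncation, the mean value theorem in $s$, and the absorption of polynomial growth into a higher power $V^q$ --- and you correctly flag the non-smoothness of $\alpha_s=\min(1,\rho_s)$ as the crux. But your splitting into (a) proposal difference, (b) acceptance-ratio difference, (c) rejection-term difference forces you to bound $|\alpha_s(x,z)-\alpha_{s'}(x,z)|$ directly, and here the $b$-truncation alone is \emph{not} doing the essential work: the truncation controls the proposal $q_s$, but $\rho_s$ also contains the target ratio $\lstat_s(z)/\lstat_s(x)$, whose $s$-derivative scales like $\rho_s$ itself and can be arbitrarily large. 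Saying ``$\min$ is Lipschitz in $\rho_s$'' does not close this, because the Lipschitz constant you would need on $\rho_s$ blows up precisely where it is irrelevant (both $\alpha$'s equal $1$), and you have not explained how to localize away from that region.

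The paper resolves this with a different decomposition. It writes $r_s(x,z)=\alpha_s(x,z)q_s(x,z)$ and bounds $\int|r_s-r_{s'}|V^p(z)\,dz$ by splitting the domain into the four pieces $A_s\cap A_{s'}$, $A_s\cap A_{s'}^c$, $A_s^c\cap A_{s'}$, $A_s^c\cap A_{s'}^c$, where $A_s=\{z:\rho_s(x,z)\geq 1\}$. On each piece the product $\alpha q$ takes a clean form --- either $q_s$ (acceptance) or $\rho_s q_s = \lstat_s(z)q_s(z,x)/\lstat_s(x)$ (rejection) --- so one applies the mean value theorem directly to these smooth expressions in $s$, and on the rejection pieces the constraint $\rho\leq 1$ is exactly what keeps the product bounded by $q\leq k_2 g_{\epsilon_2}$ after differentiation. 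For the mixed pieces $A_s\cap A_{s'}^c$ (and symmetrically), the key trick is the intermediate value theorem: along the segment $s(u)=us+(1-u)s'$ there is a point where $\rho_{s(u)}(x,z)=1$, which lets you split the difference into two sub-pieces each of the previous two types. This regional decomposition and the intermediate-value bridging are the missing ideas in your proposal; once they are in place, the polynomial factors from $\partial_s\log q_s$ and $\partial_s\log(\rho_s q_s)$ accumulate into a single polynomial $Q(x)$, and the passage $V^p(x)Q(x)\leq V^q(x)$ (using that $V$, built from $\lstat_s^{-\beta}$, dominates polynomials under \textbf{(B1)}) gives the claimed loss of exponent.
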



\begin{proof}
 For any $f\in \mathcal{L}_{V^p}$ and any $x\in \R^l$, we have
\begin{multline*}
\ntrans_{s}f(x) = \int_{\R^\dhid} f(\hid)\alpha_s(x,\hid) \cand(x,\hid)d\hid \\+ f(x)  (1-\alpha_s(x))\,
\,,
\label{eq:9}
\end{multline*}
where $\alpha_s(x,\hid)=\min\left(1,\frac{p_{\hte(s)}(\hid)\cand(\hid,x)}{\cand(x,\hid)p_{\hte(s)}(x)}\right)$ and 

\noindent $\alpha_s(x)=\int \alpha_s(x,\hid)\cand(x,\hid)d\hid$ is the average acceptance rate. Let us denote for all $x$, $z$ and $s$:
 $ r_s(x,\hid) = \alpha_s(x,\hid)\cand(x,\hid)$.

Let $s$ and $s'$ be two points in $\mathcal{K}$. We note that $s\mapsto \hat{\te}(s)$ is a  continuously differentiable  function therefore uniformly bounded in $s\in\Kapa$.

\begin{multline*}
  \| \ntrans_{s} f (x) -
  \ntrans_{s'} f (x) \|  \leq \|f\|_{V^{p}} \times \\ \left\{   
\int_\Xspace |r_s(x,\hid) - r_{s'}(x,\hid)|V^p(\hid) d\hid \right.+  \\ \left. V^p(x) \int_\Xspace |r_s(x,\hid) - r_{s'}(x,\hid)| d\hid
\right\}  \,,\\
\leq 2 \|f\|_{V^{p}} V^p(x)    \times \\
\int_\Xspace |r_s(x,\hid) - r_{s'}(x,\hid)|V^p(\hid) d\hid \,.
\end{multline*}

Let $I = \int_\Xspace |r_{s}(x,\hid) - r_{s'}(x,\hid)|V^p(\hid) d\hid$.
For sake of simplicity, we denote by $A_s$ the acceptance set instead of $A_s(x)$. We decompose $I$ into four terms~:
\begin{eqnarray}
	I &=\int_{A_s\cap A_{s'}} |r_{s}(x,\hid) - r_{s'}(x,\hid)|V^p(\hid) d\hid \nonumber
	\\
	\label{I1}\\
	+& \int_{A_s\cap A_{s'}^c} |r_{s}(x,\hid)- r_{s'}(x,\hid)|V^p(\hid) d\hid \nonumber
	\\
	 \label{I2}\\
	+& \int_{A_s^c\cap A_{s'}} |r_{s}(x,\hid) - r_{s'}(x,\hid)|V^p(\hid) d\hid \nonumber
	\\
	\label{I3}\\
	+& \int_{A_s^c\cap A_{s'}^c} |r_{s}(x,\hid) - r_{s'}(x,\hid)|V^p(\hid) d\hid \nonumber
	\\
	\label{I4}\,.
\end{eqnarray}

Let us first consider the term~\eqref{I1}. 
\begin{multline}
	\int_{A_s\cap A_{s'}} |r_{s}(x,\hid) - r_{s'}(x,\hid)|V^p(\hid) d\hid \\ = \int_{A_s\cap A_{s'}} |\cand(x,\hid) - q_{s'}(x,\hid)|V^p(\hid) d\hid \,.
\end{multline}
We use the mean value theorem on the smooth function $s\mapsto q_s(x,z)$ for fixed values of $(x,z)$.

\begin{eqnarray*}
	\frac{d q_s(x,z)}{ds} = q_s(x,z) \frac{d \log q_s(x,z)}{ds}\,.
\end{eqnarray*}
After some calculations, using the bounded drift and covariance and Assumption \textbf{(M8)}, we get~:
 \begin{eqnarray*}
	\frac{d \log q_s(x,z)}{ds} &\leq&  \tilde{P_1}(x,z)  \left( \left\| \frac{d D_s(x)}{ds}\right\| + \left\| \frac{d \Sigma_s(x)}{ds}\right\|_F \right. \\  &+& \left. \left\| \frac{d \Sigma^{-1}_s(x)}{ds}\right\|_F  \right) \,,\\
	& \leq & C_\Kapa P_1(x,z)\,.
\end{eqnarray*}
where $D_s$ and $\Sigma_s$ are respectively the drift and covariance of the proposal $q_s$ and $\tilde{P_1}$ and $P_1$ are two polynomial functions in both variables.

Using Equation~\eqref{eq:encadrementCand}, we have~:
\begin{eqnarray*}
	\left|\frac{d q_s(x,z)}{ds}\right| \leq k_2  C_\Kapa P_1(x,z) g_{\epsilon_2}(z-x) \,,
\end{eqnarray*}
which leads to~:
\begin{multline*}
	\int_{A_s\cap A_{s'}} |\cand(x,\hid) - q_{s'}(x,\hid)|V^p(\hid) d\hid  \\ \leq  k_2  C_\Kapa \|s-s'\| \int_{\Xspace} V^p(z) P_1(x,z) g_{\epsilon_2}(z-x) dz \\
	\leq  k_2 C_\Kapa Q_1(x) \|s-s'\|\,,
\end{multline*}
where $Q_1$ is a polynomial function.\\

Now we move to the second term \eqref{I2}.
Let $z\in A_s \cap A_{s'}^c$. 
We define for all $u \in [0,1]$ the barycenter $s(u)$ of $s$ and $s'$ equals to $us + (1-u)s'$ which belongs to 
the convex hull of the compact subset $\Kapa$.

Since $u\mapsto \rho_{s(u)}(x,z)$ is continuously differentiable, $\rho_s(x,z)\geq 1$ and $\rho_{s'}(x,z)<1$, using the intermediate value theorem, there exists $u \in ]0,1]$ depending on $x$ and $z$ such that $\rho_{s(u)}(x,z) = 1$. We choose the minimum value of $u$ satisfying this condition.  Therefore, 
\begin{multline}
	\label{eq:intermediaire}
	|\alpha_s(x,\hid)\cand(x,\hid) - \alpha_{s'}(x,\hid)q_{s'}(x,\hid)| \\ \leq |\cand(x,\hid) - q_{s(u)}(x,z)| +\\ |\rho_{s(u)}(x,\hid)q_{s(u)}(x,z) - \rho_{s'}(x,\hid)q_{s'}(x,\hid)|\,.
\end{multline}

We treat the first term of the right hand side as previously. For the second term, we use the mean value theorem for the function  $v\mapsto f_{s(v)}(x,z)= \rho_{s(v)}(x,z)q_{s(v)}(x,z)$ on $ ]0, u[$. There exists $v\in ]0, u[$ such that
\begin{equation*}
	|f_{s(u)}(x,z) - f_{s'}(x,\hid)| \leq \left|\frac{ d f_{s(v)}(x,z)}{dv}\right| \|s - s'\|\,.
\end{equation*}
Thanks to the upper bound above we get
\begin{eqnarray*}
	\frac{d \log f_{s(v)}(x,z)}{dv} &\leq&  C_\Kapa P_2(x,z)\,,
\end{eqnarray*}
where $P_2$ is a polynomial function in both variables. Since  on the segment defined by $s(u)$ and $s'$ we have $\rho_s(x,z)\leq 1$~:
\begin{eqnarray*}
	\frac{d f_{s(v)}(x,z)}{dv} &= &f_{s(v)}(x,z) \frac{d \log f_{s(v)}(x,z)}{dv}\\
	&\leq & C_\Kapa q_{s(v)}(x,z) P_2(x,z)\\
	&\leq & k_2 C_\Kapa \|s-s'\| P_2(x,z) g_{\epsilon_2}(z-x)\,.
\end{eqnarray*}
This yields~:
\begin{multline*}
	\int_{A_s\cap A_{s'}^c} |r_{s}(x,\hid) - r_{s'}(x,\hid)|V^p(\hid) d\hid \\ \leq  k_2 C_\Kapa \Bigl(Q_1(x) + Q_2(x)\Bigr)\|s-s'\|\,. 
\end{multline*}
\\

The third term~\eqref{I3} is the symmetric one of the second.

Let us end with the last term \eqref{I4}. 
\begin{eqnarray*}
	\int_{A_s^c\cap A_{s'}^c} |\alpha_s(x,\hid)\cand(x,\hid) - \alpha_{s'}(x,\hid)q_{s'}(x,\hid)|\times \\
	V^p(\hid) d\hid \\ = \int_{A_s^c\cap A_{s'}^c} |\rho_s(x,\hid)\cand(x,\hid) - \rho_{s'}(x,\hid)q_{s'}(x,\hid)| \times \\V^p(\hid) d\hid\,.
\end{eqnarray*}
If for all $u\in ]0,1[$, $\rho_{s(u)}(x,y) < 1$ then this term can be treated as the second term of Equation~\eqref{eq:intermediaire}.
If there exists $u\in ]0,1[$ such that $\rho_{s(u)}(x,y) \geq 1$, we define $u_0$ and $u_1$ respectively the smallest and biggest elements in $]0,1[$ such that $\rho_{s(u_0)}=\rho_{s(u_1)}=1$. The first and last terms are treated as the previous case and the middle term is treated as the term~\eqref{I1}.
\\

Putting all these upper bounds together yields~:
\begin{equation}
	\| \ntrans_{s} f (x) -
  \ntrans_{s'} f (x) \|  \leq  2\|f\|_{V^{p}}V^p(x) Q(x) \|s-s'\|\,,
\end{equation}
where $Q$ is a polynomial function in $x\in\Xspace$. Therefore, there exists a constant $q>p$ such that $V^p(x)Q(x)\leq V^q(x)$ which concludes the proof.

\end{proof}

\hspace{0.5cm}

\begin{lemma}\label{lem67}
Let $\mathcal{K}$ be a compact subset of $\Sr$. There exists a
constant $C_\mathcal{K}$ such that
for  all $1\leq p < q $, for all function $f \in \mathcal{L}_{V^p}$, for all $(s,s')
\in \mathcal{K}^2$ and for all $k\geq 0$, we have:
  \begin{eqnarray*}
\| \ntrans_{\hat{\te}(s)} ^k f -
  \ntrans_{\hat{\te}(s')} ^k f \|_{V^{q}}  \leq
C_\mathcal{K}  
\|f\|_{V^{p}} \|s-s'\| \ .
\end{eqnarray*}
\end{lemma}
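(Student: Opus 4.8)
The plan is to derive Lemma~\ref{lem67} from the one-step H\"older estimate of Lemma~\ref{lem:Holder}, combined with the uniform geometric ergodicity of the family $(\ntrans_{\hte(s)})_{s\in\Kapa}$ given by Proposition~\ref{ergogeo} and Remark~\ref{rem:ergoPuissancep}. Fix a compact $\Kapa\subset\Sr$ and two points $s,s'\in\Kapa$. I would start from the telescoping identity
\begin{equation*}
\ntrans_{\hte(s)}^{k} f - \ntrans_{\hte(s')}^{k} f = \sum_{j=0}^{k-1} \ntrans_{\hte(s)}^{j}\bigl(\ntrans_{\hte(s)}-\ntrans_{\hte(s')}\bigr)\ntrans_{\hte(s')}^{k-1-j} f ,
\end{equation*}
which is legitimate since all iterates are finite under the $V^p$-drift conditions \eqref{6.1}--\eqref{6.2}.

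The key observation is that $\ntrans_{\hte(s)}-\ntrans_{\hte(s')}$ annihilates constant functions, both kernels fixing $\mathbf{1}$. Hence in each summand I may replace $\ntrans_{\hte(s')}^{k-1-j} f$ by its centred version $g_j^{0} := \ntrans_{\hte(s')}^{k-1-j} f - p_{\hte(s')}f$, and the uniform geometric ergodicity in $V^p$-norm gives $\|g_j^{0}\|_{V^p} \le C_\Kapa \gamma_\Kapa^{k-1-j}\|f\|_{V^p}$ with $\gamma_\Kapa \in ]0,1[$. In particular $g_j^{0}\in\mathcal{L}_{V^p}$, so Lemma~\ref{lem:Holder} applies and yields, for some $q>p$ depending only on $p$,
\begin{equation*}
\|g_j\|_{V^{q}} \le C_\Kapa \gamma_\Kapa^{k-1-j}\|f\|_{V^p}\,\|s-s'\| , \qquad g_j := (\ntrans_{\hte(s)}-\ntrans_{\hte(s')})\,g_j^{0} .
\end{equation*}

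It then remains to absorb the outer $\ntrans_{\hte(s)}^{j}$ without destroying the geometric factor. I would split $\ntrans_{\hte(s)}^{j} g_j = \bigl(\ntrans_{\hte(s)}^{j} g_j - p_{\hte(s)}g_j\bigr) + p_{\hte(s)}g_j\cdot\mathbf{1}$: the centred part has $V^{q}$-norm at most $C_\Kapa\|g_j\|_{V^{q}}$ by uniform geometric ergodicity in $V^{q}$-norm, while $|p_{\hte(s)}g_j| \le \|g_j\|_{V^{q}}\, p_{\hte(s)}(V^{q})$ with $\sup_{s\in\Kapa} p_{\hte(s)}(V^{q}) < \infty$ (obtained by integrating the $V^{q}$-drift inequality against the invariant law), and since $V\ge1$ the $V^{q}$-norm of the constant function $p_{\hte(s)}g_j\cdot\mathbf{1}$ is no larger than $|p_{\hte(s)}g_j|$. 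Thus $\|\ntrans_{\hte(s)}^{j} g_j\|_{V^{q}} \le C_\Kapa\|g_j\|_{V^{q}} \le C_\Kapa \gamma_\Kapa^{k-1-j}\|f\|_{V^p}\|s-s'\|$, and summing the geometric series over $j$ gives $\|\ntrans_{\hte(s)}^{k} f - \ntrans_{\hte(s')}^{k} f\|_{V^{q}} \le C_\Kapa (1-\gamma_\Kapa)^{-1}\|f\|_{V^p}\|s-s'\|$ uniformly in $k$, which is the desired estimate (it holds a fortiori for every power larger than $q$, since $V\ge1$).

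The argument is thus a fairly mechanical iteration of Lemma~\ref{lem:Holder}; the points needing attention, rather than genuine difficulty, are the bookkeeping of the exponents of $V$ --- each use of Lemma~\ref{lem:Holder} raises the power, so one must keep all powers that appear ($p$ and $q$, together with those implicit in the drift and ergodicity estimates) below the admissibility threshold $1/\beta$, which is possible because $q$ can be taken arbitrarily close to $p$ --- and checking that every constant ($C_\Kapa$, $\gamma_\Kapa$, $\sup_{s\in\Kapa} p_{\hte(s)}(V^{q})$) is uniform over $\Kapa$, which it is by the uniform-in-compact formulation of Proposition~\ref{ergogeo}. The genuinely hard step, the one-step H\"older bound, has already been established in the proof of Lemma~\ref{lem:Holder}.
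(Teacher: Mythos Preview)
Your proposal is correct and follows essentially the same route as the paper, which simply defers to Proposition~B.2 of \cite{andrieumoulinespriouret}: that proposition is proved precisely via the telescoping decomposition you wrote, the centring trick $(\ntrans_{\hte(s)}-\ntrans_{\hte(s')})\mathbf{1}=0$, the one-step H\"older bound (here Lemma~\ref{lem:Holder}), and the uniform geometric ergodicity to make the geometric series summable. Your bookkeeping of the $V$-exponents and the uniformity of constants over $\Kapa$ is accurate; the only cosmetic simplification is that, to control $\|\ntrans_{\hte(s)}^{j} g_j\|_{V^{q}}$, the iterated drift inequality $\ntrans_{\hte(s)}^{j} V^{q}\le \lambda^{j} V^{q}+b/(1-\lambda)\le C_\Kapa V^{q}$ already gives $\|\ntrans_{\hte(s)}^{j} g_j\|_{V^{q}}\le C_\Kapa\|g_j\|_{V^{q}}$ directly, without the centred/constant split.
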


\begin{proof}
The proof of lemma \ref{lem67} follows the line of the proof of Proposition B.2 of~\cite{andrieumoulinespriouret}.\\
\end{proof}

\bigskip

Thanks to the proofs of \cite{aktdefmod}, we get that $h$ is a H\"older function for any $0<a<1$
which leads to (\textbf{A3''(i)}).\\

We finally focus on the proof of (\textbf{A3''(ii)}).

\begin{lemma}
\label{lem:3}
  Let $\mathcal{K}$ be a compact subset of $\Sr$ and $p\geq 1$.
 For all sequences  $\boldsymbol\gamma=(\gamma_k)_{k\geq 0}$ and 
  $\boldsymbol\varepsilon=(\varepsilon_k)_{k\geq 0}$ satisfying $\varepsilon_k< \overline\varepsilon$ for some $\overline\varepsilon$ sufficiently small, there exists  $C_{\mathcal{K}}>0$,
   such that  
 for any $\hid_0\in\Xspace$, we have
$$\sup_{s\in \mathcal{K}}\sup_{k\geq
  0}\mathbb{E}_{\hid,s}^{\boldsymbol\gamma}
[V^p(\hid_k)\mathds{1}_{\sigma(\mathcal{K})\land \nu(\boldsymbol\varepsilon)\geq
  k}]\leq C_{\mathcal{K}} V^{p}(\hid_0)\,,$$
where $\mathbb{E}_{\hid,s}^{\boldsymbol\gamma}$ is the expectation related to the non-homogeneous Markov chain $((\hid_k,s_k))$ started from $(\hid,s)$ with step size sequence ${\boldsymbol\gamma}$.
\end{lemma}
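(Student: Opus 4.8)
The plan is to adapt the argument of Proposition 6.1 of \cite{andrieumoulinespriouret} (used in a similar context in \cite{aktdefmod}) which establishes exactly this kind of $V^p$-moment bound for the non-homogeneous chain truncated before the first exit from a compact set or a violation of the increment constraint. The essential ingredients are: the uniform drift inequality \eqref{6.2}, i.e. $\sup_{s\in\Kapa}\ntrans_{\hte(s)}V^p(x)\leq \kappa V^p(x)$ for all $x$, and the drift-to-a-small-set inequality \eqref{6.1} which, again thanks to Proposition \ref{ergogeo} and Remark \ref{rem:ergoPuissancep}, holds with $j=1$ for our family of kernels with some $p\geq 2$; together with the fact that on the event $\{\sigma(\Kapa)\wedge\nu(\bfe)\geq k\}$ one stays inside $\Kapa$, so that all the uniform-in-$s$ constants obtained in Proposition \ref{ergogeo} apply along the whole trajectory up to time $k$.

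First I would recall the decomposition of the expectation along the non-homogeneous chain: writing $\mathcal{F}_k$ for the natural filtration, one has on $\{\sigma(\Kapa)\wedge\nu(\bfe)\geq k\}$ that $s_{k-1}\in\Kapa$, hence by \eqref{6.2}
\begin{equation*}
\mathbb{E}_{\hid,s}^{\bfe}[V^p(\hid_k)\mathds{1}_{\sigma(\Kapa)\wedge\nu(\bfe)\geq k}\mid\mathcal{F}_{k-1}]\leq \ntrans_{\hte(s_{k-1})}V^p(\hid_{k-1})\,\mathds{1}_{\sigma(\Kapa)\wedge\nu(\bfe)\geq k-1}\leq \kappa\,V^p(\hid_{k-1})\mathds{1}_{\sigma(\Kapa)\wedge\nu(\bfe)\geq k-1}.
\end{equation*}
Iterating naively gives a bound $\kappa^k V^p(\hid_0)$, which is not uniform in $k$. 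The improvement comes from using \eqref{6.1} instead: on $\smallset$ one has $V^p$ bounded by a constant $c_\smallset = \sup_{\smallset}V^p<\infty$ (since $\smallset$ is compact and $V$ continuous), and off $\smallset$ one gains the contraction factor $\lambda<1$. So I would set up the standard super-martingale-type argument: define $W_k = V^p(\hid_k)\mathds{1}_{\sigma(\Kapa)\wedge\nu(\bfe)\geq k}$ and show $\mathbb{E}[W_k\mid\mathcal{F}_{k-1}]\leq \lambda W_{k-1} + B\mathds{1}_\smallset(\hid_{k-1})\mathds{1}_{\sigma(\Kapa)\wedge\nu(\bfe)\geq k-1}\leq \lambda W_{k-1}+B$, whence by induction $\mathbb{E}_{\hid,s}^{\bfe}[W_k]\leq \lambda^k V^p(\hid_0) + B/(1-\lambda)\leq V^p(\hid_0) + B/(1-\lambda)\leq C_\Kapa V^p(\hid_0)$ since $V^p(\hid_0)\geq 1$. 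One subtlety to spell out is that the re-initialization (the \texttt{else} branch of Algorithm \ref{algo:AMALASAEM}) places $\hid_k$ in the fixed compact $\mathrm{K}$, but this only occurs at the very time the truncation indicator drops to $0$, i.e. strictly after $\sigma(\Kapa)\wedge\nu(\bfe)$, so it never interferes with $W_k$; the dependence of the dynamics on the decreasing step sizes $\gamma_k$ and on $\bfe$ enters only through $s_k$, which stays in $\Kapa$ on the relevant event, so the kernel bounds \eqref{6.1}--\eqref{6.2} apply verbatim with constants independent of $k$ and of the sequences — this is where the hypothesis $\varepsilon_k<\overline\varepsilon$ is used, to guarantee $s_k$ remains in a fixed compact neighbourhood where Proposition \ref{ergogeo} gives uniform constants.

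The main obstacle I anticipate is purely bookkeeping rather than conceptual: one must carefully track the truncation events so that the conditional expectation identities are legitimate (the event $\{\sigma(\Kapa)\wedge\nu(\bfe)\geq k\}$ is $\mathcal{F}_{k-1}$-measurable, which is what makes the recursion valid), and one must verify that the single common drift function $V$ from \eqref{eq:defV} together with the power $p\geq 2$ supplied by Remark \ref{rem:ergoPuissancep} indeed satisfies both \eqref{6.1} and \eqref{6.2} with constants uniform over $\Kapa$ — but both of these were already established in the proof of Proposition \ref{ergogeo} and in the verification of (DRI1) above. Consequently the proof reduces to citing Proposition 6.1 of \cite{andrieumoulinespriouret}, whose hypotheses are exactly (DRI1), and I would present it in that compressed form.
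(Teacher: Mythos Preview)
Your proposal is correct and follows essentially the same route as the paper: use the one-step drift inequality $\ntrans_s V^p \leq \lambda V^p + B\mathds{1}_\smallset$ with $\lambda<1$ (valid uniformly over $s\in\Kapa$ by Proposition~\ref{ergogeo} and Remark~\ref{rem:ergoPuissancep}), condition on $\mathcal{F}_{k-1}$, iterate the resulting recursion to get $\lambda^k V^p(\hid_0)+B\sum_{l=0}^{k-1}\lambda^l$, and absorb the additive constant using $V^p(\hid_0)\geq 1$ to obtain $C_\Kapa = 1+B/(1-\lambda)$. Your treatment of the $\mathcal{F}_{k-1}$-measurability of the truncation event and of the re-initialization branch is more explicit than the paper's, but the argument is the same.
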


\begin{proof}
  Let $K$ be a compact subset of $\Theta$ such that
  $\hat\te(\mathcal{K})\subset K$. We note in the sequel,
  $\te_k=\hat\te(s_k)$. We have for $k\geq 2$, using the Markov property and
  the drift property \eqref{eq:drift} for $V^p$, 
\begin{eqnarray}
  \mathbb{E}_{\hid,s}^{\boldsymbol\gamma} [V^{p}(\hid_k)\mathds{1}_{\sigma(\mathcal{K})\land
    \nu(\boldsymbol\varepsilon)\geq k}]   
	&\leq& 
	\mathbb{E}_{\hid,s}^{\boldsymbol\gamma}
  [\ntrans_{\te_{k-1}}V^{ p}(\hid_{k-1})] \nonumber \\
	\label{eq:majorationDrift}
\\
&\leq &
\lambda \mathbb{E}_{\hid,s}^{\boldsymbol\gamma}
[V^{ p}(\hid_{k-1})]+ C \,.\nonumber\\
\label{eq:majorationDrift}
\end{eqnarray}

Iterating the same arguments recursively leads to~: 
\begin{multline*}
	\mathbb{E}_{\hid,s}^{\boldsymbol\gamma} [V^{p}(\hid_{k})\mathds{1}_{\sigma(\mathcal{K})\land
	    \nu(\boldsymbol\varepsilon)\geq k}] 
	\\ \leq
	 \lambda^k \mathbb{E}_{\hid,s}^{\boldsymbol\gamma} [V^{p}(\hid_0)] + 	C \sum\limits_{l=0}^{k-1} \lambda^l \,.
\end{multline*}

Since $\lambda<1$ and $V(\hid)\geq 1$ for all $\hid \in \Xspace$, 
 for all $k\in\N$, we have~:
\begin{eqnarray*}
 \mathbb{E}_{\hid,s}^{\boldsymbol\gamma} [V^{p}(\hid_k)\mathds{1}_{\sigma(\mathcal{K})\land
    \nu(\boldsymbol\varepsilon)\geq k}] &\leq&
 V^{p}(\hid_0) + C\sum\limits_{l=0}^{k-1}\lambda^l\\
& \leq & V^{p}(\hid_0) \left(1+ \frac{C}{1-\lambda}\right)\,.
\end{eqnarray*}

\end{proof}

This yields (\textbf{A3'(ii)}) which concludes the proof of Theorem~\ref{Th:convergenceAlgo2}.\\

\subsection{Proof of the Central Limit Theorem for the Estimated Sequence}
\label{appendix:TCL}

The proof of Theorem~\ref{Th:TCL} follows the lines of the proof of Theorem 25  of \cite{DelyonCours}. This theorem is an application of Theorem 24 of \cite{DelyonCours} in the case of Markovian dynamics. However, some assumptions required in Theorem 24 are not fulfilled  in our case: the A-stability of the algorithm and the boundedness in infinite norm of the solution of the Poisson equation.

Consider the stochastic approximation:
\begin{equation}
	\label{eq:algoReste}
	s_k=s_{k-1}+\gamma_k h(s_{k-1})+ \gamma_k \eta_k\,,
\end{equation}
where  the remainder term is decomposed as follows:
\begin{equation}
	\label{eq:eta}
	\eta_k = \xi_k + \nu_k -\nu_{k-1} + r_k
\end{equation}
with
\begin{eqnarray}
	\xi_k &=& g_{\hte(s_{k-1})}(\hid_{k}) -\ntrans_{\hte(s_{k-1})}g_{\hte(s_{k-1})}(\hid_{k-1})\nonumber 
	\\
	\label{eq:xi}\\
	\nu_k &=& - \ntrans_{\hte(s_k)} g_{\hte(s_k)} (\hid_k) 
	\label{eq:nu}\\
	r_k &=& \ntrans_{\hte(s_k)} g_{\hte(s_k)} (\hid_k) - \ntrans_{\hte(s_{k-1})} g_{\hte(s_{k-1})} (\hid_k)\nonumber\\
	\label{eq:r}
\end{eqnarray} 
and for any $s\in \Sr $, $g_{\hte(s)}$ is a solution of the Poisson equation $g-\ntrans_{\hte(s)} g = H_s - p_{\hte(s)} (H_s)$.

We recall Theorem 24 of \cite{DelyonCours}  with sufficient assumptions for our setting.

\begin{theorem}[Adapted from Theorem 24 \cite{DelyonCours}]
	\label{thm24bis}
Let assumptions \textbf{(N1)} and \textbf{(N3)} be fulfilled. Furthermore, assume that for some matrix $U$, some $\varepsilon >0$ and some positive random variables $X,X',X''$:
\begin{eqnarray}
	\label{eq:xiMartingale} \mbox{The sequence }  (\xi_i) \mbox{ is a } \mathcal{F}- \mbox{martingale} \\
	\label{eq:xiL2} \sup\limits_{i\in \N} \|\xi_i\|_{2+\varepsilon} <\infty\\
	\label{eq:rn} \lim\limits_{k\to\infty} \gamma_k^{-1/2} \|X r_k\|_1 =0\\
	\label{eq:nun}\lim\limits_{k\to\infty} \gamma_k^{1/2} \|X' \nu_k\|_1 =0\\
	\label{eq:varemp} \lim\limits_{k\to\infty} \gamma_k \|X'' \sum\limits_{i=1}^k (\xi_i\xi_i^T -U) \|_1 =0
\end{eqnarray} 
where $\mathcal{F}=(\mathcal{F}_i)_{i \in \mathbb{N}}$ is the increasing family of $\sigma-$algebra generated by the random variables $(s_0,\hid_1, ... , \hid_i)$. Then 
\begin{equation}
	\frac{s_k-s^*}{\sqrt{\gamma_k}} \to_\mathcal{L} \mathcal{N}(0,V)
\end{equation}
where $V$ is the solution of the following Lyapunov equation $U+JV +VJ^T =0$.
\end{theorem}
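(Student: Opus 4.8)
The plan is to establish this martingale central limit theorem by reducing the normalized error $(s_k-s^*)/\sqrt{\gamma_k}$ to a martingale triangular array and applying a Lindeberg--type martingale CLT; this realizes concretely the scheme of Theorem 24 of \cite{DelyonCours}, the hypotheses \eqref{eq:xiMartingale}--\eqref{eq:varemp} being exactly the sufficient conditions replacing the abstract assumptions (A-stability and sup-norm boundedness of the Poisson solution) that fail here. Writing $\Delta_k = s_k-s^*$ and using $h(s^*)=0$ together with the $C^1$ regularity of $h$ with Lipschitz first derivatives near $s^*$ from \textbf{(N1)}, a first order expansion of $h$ at $s^*$ gives
\begin{equation}
	\Delta_k = (I+\gamma_k J)\Delta_{k-1} + \gamma_k \eta_k + \gamma_k e_k\,,
\end{equation}
where the linearization error obeys $\|e_k\|\leq C\|\Delta_{k-1}\|^2$ on the neighborhood of $s^*$ where the expansion is valid, an event which eventually holds by the almost sure convergence $s_k\to s^*$ of \textbf{(N1)}. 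Iterating and inserting the decomposition \eqref{eq:eta} of $\eta_j$ yields the representation
\begin{equation}
	\Delta_k = \Phi_{k,0}\Delta_0 + \sum\limits_{j=1}^{k}\Phi_{k,j}\,\gamma_j\bigl(\xi_j + (\nu_j-\nu_{j-1}) + r_j + e_j\bigr)\,,
\end{equation}
with $\Phi_{k,j}=\prod_{i=j+1}^{k}(I+\gamma_i J)$. Since all eigenvalues of $J$ have negative real part, $\|I+\gamma_i J\|\leq 1-c\gamma_i$ for small $\gamma_i$, so $\|\Phi_{k,j}\|$ decays like $\exp(-c\sum_{i=j+1}^{k}\gamma_i)$; as $\sum_i\gamma_i$ diverges this beats the polynomial growth of $\gamma_k^{-1/2}$ and the initial term $\Phi_{k,0}\Delta_0/\sqrt{\gamma_k}$ is negligible.

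The next step is to discard the three nuisance contributions after division by $\sqrt{\gamma_k}$. The remainder sum is controlled by \eqref{eq:rn}: the uniform boundedness of $\Phi_{k,j}$ lets the random weight $X$ dominate the products, and $\gamma_j^{-1/2}\|X r_j\|_1\to0$ forces the contribution to vanish in $L^1$. The telescoping term is treated by an Abel summation by parts, transferring the difference onto the smooth weights $\Phi_{k,j}\gamma_j$, whose successive values differ by $O(\gamma_j)$; the boundary and bulk terms are then killed by \eqref{eq:nun}, the weight $X'$ supplying the required domination. Finally the quadratic error $e_j$ is negligible because $\Delta_j\to0$ almost surely and the lower bound $\alpha>2/3$ in \textbf{(N3)} guarantees that $\gamma_k^{-1/2}\sum_j\Phi_{k,j}\gamma_j\|\Delta_{j-1}\|^2\to0$; this is precisely the role of the restriction $2/3<\alpha$, which keeps the second order error below the $\sqrt{\gamma_k}$ scale.

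Only the martingale part $M_k=\gamma_k^{-1/2}\sum_{j=1}^{k}\Phi_{k,j}\gamma_j\xi_j$ then survives, and the proof is completed by the martingale CLT for the array $\{\Phi_{k,j}\gamma_j\xi_j/\sqrt{\gamma_k}\}_{1\leq j\leq k}$, which is a martingale difference array by \eqref{eq:xiMartingale}. The Lindeberg condition follows from the uniform moment bound \eqref{eq:xiL2}, since $\sup_j\|\xi_j\|_{2+\varepsilon}<\infty$ dominates the truncated second moments. The predictable quadratic variation is
\begin{equation}
	\frac{1}{\gamma_k}\sum\limits_{j=1}^{k}\Phi_{k,j}\,\gamma_j^2\,\mathbb{E}[\xi_j\xi_j^T\mid\mathcal{F}_{j-1}]\,\Phi_{k,j}^T\,,
\end{equation}
and replacing the conditional covariances by the constant matrix $U$, as licensed by \eqref{eq:varemp} with the weight $X''$, reduces its limit to a deterministic sum whose value is the stationary covariance of the linear system $\dot x=Jx$ driven by noise of intensity $U$, namely the solution $\Gamma$ (denoted $V$ here) of $U+J\Gamma+\Gamma J^T=0$; the upper bound $\alpha<1$ in \textbf{(N3)} is what yields this clean Lyapunov equation with no extra $\tfrac12 I$ drift term. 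The martingale CLT then gives $M_k\to_{\mathcal{L}}\mathcal{N}(0,\Gamma)$, hence $(s_k-s^*)/\sqrt{\gamma_k}\to_{\mathcal{L}}\mathcal{N}(0,\Gamma)$. The main obstacle is exactly the convergence of this predictable quadratic variation to the Lyapunov solution: one must show that the $\gamma$-weighted products $\Phi_{k,j}$ combined with $\gamma_k=k^{-\alpha}$ accumulate the instantaneous covariances $U$ into $\Gamma$, and that \eqref{eq:varemp} can be substituted uniformly despite the randomness of the conditional covariances, which is where the precise range of $\alpha$ is consumed.
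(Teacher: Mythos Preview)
Your proposal is correct and follows exactly the route the paper invokes (Delyon's scheme): linearize at $s^*$, propagate through the products $\Phi_{k,j}$ (the paper uses the continuous-time surrogate $\exp[(t_k-t_i)J]$ with $t_i=\sum_{j\le i}\gamma_j$), discard the $r_j$, $\nu_j-\nu_{j-1}$ and quadratic-error contributions via \eqref{eq:rn}, \eqref{eq:nun} and \textbf{(N3)}, and finish with a martingale CLT on the $\xi_j$ using \eqref{eq:xiL2} and \eqref{eq:varemp}. The only difference is presentational: the paper's own proof is a one-paragraph pointer recording that Delyon's Theorems 19, 20, 23 and Proposition 39 still go through once assumption \textbf{(C)} is weakened to \textbf{(N1)} and the sup-norm bounds on the Poisson solution are replaced by the $L^1$ controls---singling out the convergence of $\gamma_k^{-1/2}\sum_i\exp[(t_k-t_i)J]\gamma_i r_i$ to $0$ in probability (obtained ``in expectation'') as the one step needing care---whereas you unpack that same machinery explicitly.
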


The result of Theorem 24 still holds replacing assumption  \textbf{(C)} of \cite{DelyonCours} by \textbf{(N1)}. Indeed, it is sufficient to establish that  the random variable $\gamma_k^{-1/2} \sum\limits_{i=0}^k \exp[(t_k-t_i)J]\gamma_i r_i$  converges toward $0$ in probability where $t_i = \sum\limits_{j=1}^i \gamma_j$. Theorem 19 and Proposition 39 of \cite{DelyonCours} can be applied in expectation. Theorems 23 and 20 of \cite{DelyonCours} also still hold with assumption \textbf{(N1)}.\\

We now prove that assumptions of Theorem \ref{thm24bis} hold. \\

By definition of $\xi_i$ it is obvious that \eqref{eq:xiMartingale} is fulfilled. Moreover, the following lemma proves that there exists   $\varepsilon >0 $ such that \eqref{eq:xiL2} holds with $X=1$.

\begin{lemma}
	\label{lemmaMartin}
	For all $\varepsilon > 0$, the sequence $\left(\xi_k\right)$ is in $L^{2+\varepsilon}$.
\end{lemma}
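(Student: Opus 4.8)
The plan is to dominate $|\xi_k|$ by the Lyapunov function $V$ of Equation~\eqref{eq:defV} evaluated at $\hid_k$ and $\hid_{k-1}$, and then to bound the moments of $V(\hid_k)$ uniformly in $k$ by iterating a drift inequality. Recall $\xi_k = g_{\hte(s_{k-1})}(\hid_{k}) - \ntrans_{\hte(s_{k-1})}g_{\hte(s_{k-1})}(\hid_{k-1})$.

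First I would invoke Theorem~\ref{Th:convergenceAlgo2}: the number of re-initializations of the stochastic approximation is a.s. finite, hence a.s. the sequence $(s_k)_k$ remains, from some index on, in a fixed compact subset $\Kapa\subset\Sr$; enlarging $\Kapa$ we may take $s_k\in\Kapa$ for all $k$. The estimates obtained while checking assumption~(\textbf{A3'(i)}) in Section~\ref{subsec:Convergence} give $\sup_{s\in\Kapa}\|g_{\hte(s)}\|_V<\infty$ and $\sup_{s\in\Kapa}\|\ntrans_{\hte(s)}g_{\hte(s)}\|_V<\infty$, so there is a constant $C_\Kapa$ with $|\xi_k|\leq C_\Kapa\bigl(V(\hid_k)+V(\hid_{k-1})\bigr)$ for every $k$. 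By convexity, for any $\varepsilon>0$, $|\xi_k|^{2+\varepsilon}\leq 2^{1+\varepsilon}C_\Kapa^{2+\varepsilon}\bigl(V^{2+\varepsilon}(\hid_k)+V^{2+\varepsilon}(\hid_{k-1})\bigr)$, so it is enough to prove $\sup_{k}\mathbb{E}\bigl[V^{2+\varepsilon}(\hid_k)\bigr]<\infty$.

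For that last bound I would use the drift inequality for the power $V^{2+\varepsilon}$. By Proposition~\ref{ergogeo} together with Remark~\ref{rem:ergoPuissancep}, and choosing the exponent in~\eqref{eq:defV} small enough (allowed by assumption~(\textbf{B3})), there are $\lambda\in\,]0,1[$ and $b<\infty$, independent of $s\in\Kapa$, such that $\ntrans_{\hte(s)}V^{2+\varepsilon}(x)\leq\lambda V^{2+\varepsilon}(x)+b\mathds{1}_{\smallset}(x)$ for all $x\in\Xspace$ and $s\in\Kapa$. Conditioning on $\mathcal{F}_{k-1}$, using $\te_{k-1}=\hte(s_{k-1})$ with $s_{k-1}\in\Kapa$, and iterating exactly as in the proof of Lemma~\ref{lem:3}, one gets $\mathbb{E}\bigl[V^{2+\varepsilon}(\hid_k)\bigr]\leq V^{2+\varepsilon}(\hid_0)\bigl(1+b/(1-\lambda)\bigr)$ for all $k$. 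Combining with the previous display gives $\sup_k\mathbb{E}\bigl[|\xi_k|^{2+\varepsilon}\bigr]<\infty$, i.e. $(\xi_k)\subset L^{2+\varepsilon}$.

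The main obstacle is precisely the drift inequality for $V^{2+\varepsilon}$ rather than for $V$: the super-exponential tail assumption~(\textbf{B1}) and the integrability assumption~(\textbf{B3}) must be strong enough that $(2+\varepsilon)$ times the relevant exponent stays in the admissible range of the geometric-ergodicity argument of Proposition~\ref{ergogeo}; this is why $V$ in~\eqref{eq:defV} was constructed with a free exponent, and it is where one has to be a little careful (for a prescribed $\varepsilon$ one may need to take that exponent small from the outset). Everything else is a routine combination of the $V$-boundedness of the Poisson solution and the iterated drift bound.
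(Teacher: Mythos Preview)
Your proposal is correct and follows essentially the same route as the paper: bound $|\xi_k|$ by $C\bigl(V(\hid_k)+V(\hid_{k-1})\bigr)$ via the $V$-boundedness of $g_{\hte(s)}$ and $\ntrans_{\hte(s)}g_{\hte(s)}$ on a compact $\Kapa$, apply convexity of $x\mapsto x^{2+\varepsilon}$, and then control $\mathbb{E}[V^{2+\varepsilon}(\hid_k)]$ uniformly in $k$ by iterating the drift inequality for $V^{2+\varepsilon}$ exactly as in Lemma~\ref{lem:3}. Your explicit discussion of why the drift for the power $V^{2+\varepsilon}$ is available (via Remark~\ref{rem:ergoPuissancep} and the freedom in the exponent defining $V$) is a point the paper leaves implicit.
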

\begin{proof}
	We use the convexity of the function $x \mapsto x^{2+\varepsilon}$. Indeed, we have
\begin{multline*} 
|g_{\hte(s_{k-1})}(\hid_k)-\ntrans_{\hte(s_{k-1})}g_{\hte(s_{k-1})}(\hid_{k-1})|^{2+\varepsilon}  \\ \leq (|g_{\hte(s_{k-1})}(\hid_k)|+|\ntrans_{\hte(s_{k-1})}g_{\hte(s_{k-1})}(\hid_{k-1})|)^{2+\varepsilon}  \\
\leq
  C_\varepsilon(|g_{\hte(s_{k-1})}(\hid_k)|^{2+\varepsilon} + |\ntrans_{\hte(s_{k-1})}g_{\hte(s_{k-1})}(\hid_{k-1})|^{2+\varepsilon}) \,,
\end{multline*}
where $C_\varepsilon=\frac{1}{2^{3+\varepsilon}}$.
	
Applying the drift condition,  we get~:
\begin{multline*}
	\mathbb{E}(||\xi_k ||^{2+\varepsilon}  |  \mathcal{F}_{k-1})  \leq  C_\varepsilon \left( \mathbb{E}( |g_{\hte(s_{k-1})}(\hid_k)|^{2+\varepsilon}  \ | \ \mathcal{F}_{k-1})  \right.\\ 
\left.
	+ |\mathbb{E}(\ntrans_{\hte(s_{k-1})}g_{\hte(s_{k-1})}(\hid_{k-1})|\mathcal{F}_{k-1})|^{2+\varepsilon}
	\right) \\
	\leq  C \  \mathbb{E}( V(\hid_k)^{2+\varepsilon} + V (\hid_{k-1})^{2+\varepsilon}|\mathcal{F}_{k-1}))
 \\
 \leq 	C \left( \lambda V^{2+\varepsilon}(\hid_{k-1}) + 1 \right) \,.
\end{multline*} 
Finally taking the expectation after induction as in Lemma~\ref{lem:3} leads to:
\begin{equation*}
\mathbb{E} (||\xi_k^{2+\varepsilon}||)	\leq  C V^{2+\varepsilon}(\hid_{0}) < +\infty\,.
\end{equation*}

\end{proof}

Let us now focus on Equation \eqref{eq:rn}. Thanks to the H\"older property of our kernel and the fact that $H_{s_k}$ belongs to $\mathcal{L}_V$:

\begin{eqnarray*}
	\|r_k\|_1 &= &\mathbb{E}[|\ntrans_{\hte(s_k)} g_{\hte(s_k)} (\hid_k) - \ntrans_{\hte(s_{k-1})} g_{\hte(s_{k-1})} (\hid_k)|]\\
	&\leq & C \mathbb{E}[V^q(\hid_k) |s_k - s_{k-1}|^a]\\
	&\leq & C \mathbb{E}[V^{q+1}(\hid_k)] \gamma_k^a \\
	&\leq & C \gamma_k^a
\end{eqnarray*}
where the last inequality comes from the drift property.
Since the H\"older property is true for any $0<a<1$, we can choose $a>1/2$ which leads to the conclusion.\\

To prove Equation \eqref{eq:nun}, we note that using the drift condition as in the previous lemma, $\mathbb{E}(\|\nu_k\|)$ is uniformly bounded in $k$. Since the step-size sequence $(\gamma_k)_k$ tends to zero, the result follows with $X'=1$.\\

We follow the lines of the proof of Theorem 25 of \cite{DelyonCours} to establish Equation \eqref{eq:varemp}.
As in the proof of Lemma \ref{lemmaMartin}, we use the drift property coupled with our H\"older condition in $\mathcal{L}_V$-norm instead of the usual H\"older condition  considered by~\cite{DelyonCours} which is denoted (MS).\\

This concludes the proof of Theorem \ref{thm24bis}. \\

Applying Theorem \ref{thm24bis} allows to prove the first part of Theorem \ref{Th:TCL}. Assumption (\textbf{N2}) enables to characterize the covariance matrix $\Gamma$.
The Delta method gives the result on the sequence $(\theta_k)$ achieving the proof of our Central Limit Theorem.


\bibliographystyle{spmpsci}
\bibliography{bibcomp}


%
%



\end{document}